\documentclass[11pt]{article}

\usepackage{latexsym,verbatim,ifthen,graphicx,mathrsfs}
\usepackage[english]{babel}
\usepackage[utf8]{inputenc} 	
\usepackage{amssymb}
\usepackage{amsmath}
\usepackage{amsthm}	
\usepackage{tikz-cd}
\usepackage{mathtools}  
\usepackage{microtype}		 
\usepackage[overload]{empheq}

\usepackage{calc}
\newlength\myheight
\newlength\mydepth
\settototalheight\myheight{Xygp}
\settodepth\mydepth{Xygp}
\setlength\fboxsep{0pt}

\usetikzlibrary{matrix,arrows,decorations.pathmorphing}
\usepackage[all]{xy}				
\usepackage{hyperref}
\hypersetup{
	colorlinks=true,
	allcolors=blue,
}   		
\usepackage{anysize}
\usepackage{cancel}									
\marginsize{2.5cm}{2.5cm}{2cm}{2cm}
\usepackage{enumerate}

\usepackage{mathrsfs}									
\usepackage{fourier}			
\usepackage[Sonny]{fncychap}	
\usepackage{fancyhdr}

\newtheorem{theorem}{Theorem}[section]
\newtheorem{lemma}[theorem]{Lemma}
\newtheorem{proposition}[theorem]{Proposition}
\newtheorem{corollary}[theorem]{Corollary}
\theoremstyle{definition}
\newtheorem{example}[theorem]{Example}

\theoremstyle{definition}
\newtheorem{remark}[theorem]{Remark}

\newtheorem{definition}[theorem]{Definition}

\newtheorem{thm}{Theorem}


\DeclareMathOperator{\id}{id}
\DeclareMathOperator{\idd}{id}

\DeclareMathOperator{\ms}{\mathbb S}

\renewcommand{\P}{{\mathcal{P}}}

\newcommand{\E}{{\mathcal{E}}}
\newcommand{\D}{\mathcal{\D}}
\newcommand{\C}{\mathcal{\C}}

\newcommand{\Sym}{{\operatorname{Sym}}}

\definecolor{red}{rgb}{1,0.1,0.1}
\definecolor{blue}{rgb}{0.1,0.1,1}
\definecolor{green}{rgb}{0,100,0}

\begin{document}

\title{An obstruction theory for strictly commutative algebras in positive characteristic}

\author{Oisín Flynn-Connolly}

\date{}
\maketitle

\abstract{This is the first in a sequence of articles exploring the relationship between commutative algebras and $E_\infty$-algebras in characteristic $p$ and mixed characteristic. In this article we lay the groundwork by defining a new class of cohomology operations over $\mathbb F_p$ called cotriple products, generalising Massey products. We compute the secondary cohomology operations for a strictly commutative dg-algebra and the obstruction theories these induce, constructing several counterexamples to characteristic 0 behaviour, one of which answers a question of Campos, Petersen, Robert-Nicoud and Wierstra. We construct some families of higher cotriple products and comment on their behaviour. Finally, we distinguish a subclass of cotriple products that we call higher Steenrod operations and conclude with our main theorem, which says that $E_\infty$-algebras can be rectified if and only if the higher Steenrod operations vanish coherently.}

\section{Introduction}
Since its introduction by Quillen \cite{quillen69} and Sullivan \cite{Sullivan77}; rational homotopy theory has probably become the single most successful subfield of algebraic topology. The latter approach to the theory reduces the study of rational topological spaces to that of commutative dg-algebras. 

\medskip

One of the central observations of \cite{Sullivan77} was that it was possible to replace the $E_\infty$-algebra $C^\bullet(X, \mathbb Q)$ with a strictly commutative model $A_{PL}(X)$. In positive characteristic, this is not possible because the Steenrod operations act as obstructions, and, in particular, for spaces we have that the zeroth Steenrod power operation  $P^0$ never vanishes (see Proposition \ref{prop:spaces_are_never_commutative}). The main goal of this article is to investigate the precise relationship between strictly commutative and $E_\infty$-algebras in positive characteristic.

\medskip

It is here that we introduce the key idea of this article; that is, that rectifiablity in characteristic $p$ should be studied in a similar manner to formality in characteristic 0. In characteristic 0, Massey products provide higher obstructions to formality. Massey triple products correspond to relations in the cohomology algebra and higher Massey products correspond to syzygies between those relations. For an $E_\infty$-algebra to be rectifiable, its Steenrod operations must vanish, but we also will need to impose conditions ensuring that syzygies between them should vanish as well. This will be complicated by the fact that strictly commutative dg-algebras do have one Steenrod operation - the Frobenius map - and some higher cohomology operations coming from that.

\medskip

It is worth noting that, despite not modelling spaces directly, commutative algebras retain some significant advantages over $E_\infty$-algebras. The most significant of these is computational. Commutative algebras, due to the fact that they are generated by a single operation with the simplest possible behaviour, of all the algebraic objects appearing in geometry and topology, are the most amenable to computer algebra techniques. In contrast, $E_\infty$-algebras are generated by infinitely many operations, are generally large, unwieldy, and very few operations, with some notable exceptions \cite{medina21}, on them have been, or are likely to be, automated.

\medskip

We develop an obstruction theory, determined by \emph{cotriple products}, a non-linear generalisation of both Massey products and Steenrod operations, designed to handle the extra invariants given by the Frobenius map. More precisely, these operations are defined to be differentials in a spectral sequence associated to the operadic cotriple resolution. We show that they also admit a description that is highly reminscent of defining systems for Massey products, complete with well-defined indeterminacies. In many situations, they also possess the main advantage of Massey products, the indeterminacy can be computed directly from the product set without needing to compute the full spectral sequence (Proposition \ref{productsetease}).

\medskip

We then study cotriple resolutions in the context of the commutative operad in positive characteristic. The main result is that the secondary cotriple products consist of traditional Massey products along with two additional types of operation coming from the Frobenius map. We compute the indeterminacy of these additional operations and use them to construct examples of commutative dg-algebras that are formal over $\mathbb Q$ (Example \ref{ex:algebrasweaklyhomotopicoverQbutnotFp}) but not over $\mathbb F_p$, and algebras with a divided power structure on cohomology that are not weakly equivalent to a divided powers algebra (Example \ref{ex:algebranoteqtodividedpower}).

\medskip

In \cite[Section 0.3]{campos23}, the authors pose the following question: \emph{If two commutative dg algebras are quasi-isomorphic as associative dg algebras, must they be quasi-isomorphic also as commutative dg algebras?} They then settle the question in characteristic 0.

    \begin{theorem}\cite[Theorem A]{campos23}
        Let $A$ and $B$ be two commutative dg algebras over a field of characteristic zero. Then, $A$ and $B$ are quasi-isomorphic as associative dg algebras if and only if they are also quasi-isomorphic as commutative dg algebras. 
    \end{theorem} 
It is perhaps unsurprising that cotriple products offer a useful perspective in characteristic $p$. In particular, we are able to provide an explicit counterexample (Theorem \ref{mycounterexample}) in characteristic 2 which extends to a general method for constructing counterexamples in characteristic $p$ for odd primes. 

Aside from cotriple products, the other key tool in this construction are \textit{cup-1 algebras}, which are a combinatorially convenient way to find and work with algebras that have a commutative cohomology ring, but which are not themselves commutative. We note that these have a rich theory, which still undergoing development, for more details see \cite{porter25}. 
\medskip

We then turn to the study of higher order operations, this turns out to be much more subtle than with Massey produts, as the existence of a higher \emph{homotopy invariant} operation does not necessarily follow automatically from the vanishing of a lower order one (Example \ref{counterexampletohomotopytype}). We define an infinite family of higher operations coming from the iterations of the Frobenius map and compute their indeterminacy.

\medskip 

Finally, we conclude with a necessary and sufficient condition for an $E_\infty$-algebra to be quasi-isomorphic to a commutative algebra. We first define higher Steenrod operations as subset of the cotriple operations. Then we have the following rectification result.
\begin{thm}
    Let $A$ be an $E_\infty$-algebra over $\mathbb F_p$. Then $A$ is rectifiable if and only if its higher Steenrod operations vanish coherently.
\end{thm}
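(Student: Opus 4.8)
The plan is to translate rectifiability into a lifting problem for the operadic cotriple resolution and then organize the lift as an obstruction-theoretic induction up the cotriple filtration, with the higher Steenrod operations appearing as the successive obstruction classes. Write $\mathcal{C}_\bullet \to \mathrm{Com}$ for the cotriple resolution, whose totalization we take as our model of $E_\infty$; an $E_\infty$-structure on $A$ is then encoded by a map of operads $\mathcal{C}_\bullet \to \End_A$, equivalently by a Maurer--Cartan element $\tau$ in the convolution $L_\infty$-algebra $\mathfrak{g}_A$ governing such maps. Rectifiability asks that $\tau$ be gauge-equivalent, through the zig-zags allowed by quasi-isomorphism, to a Maurer--Cartan element factoring through the strict quotient $\mathrm{Com}$. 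The cotriple filtration induces a filtration on $\mathfrak{g}_A$ and hence the spectral sequence whose differentials are, by definition, the cotriple products; the higher Steenrod operations are precisely the sub-collection of these differentials that can obstruct descent of $\tau$, the remaining (Massey-type) cotriple products being irrelevant to strict commutativity. The theorem is the assertion that this sub-collection is a complete set of obstructions.

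For the forward implication I would use homotopy invariance of the cotriple products together with a direct computation on strict algebras. If $A \simeq C$ with $C$ a strictly commutative dg-algebra, then since the cotriple products are invariants of the weak equivalence class---being differentials in a spectral sequence functorial in quasi-isomorphisms, by the same argument establishing their indeterminacy in Proposition \ref{productsetease}---it suffices to show that every higher Steenrod operation of a strict algebra vanishes coherently. But for a strict algebra the structure map already factors through $\mathrm{Com}$, so the associated Maurer--Cartan element sits in the bottom filtration layer; all higher differentials emanating from it vanish, and, crucially, the nullhomotopies witnessing this are mutually compatible because they all arise from the single strictification, giving coherence rather than mere vanishing.

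The substance of the theorem is the reverse implication, which I would prove by induction on the cotriple filtration degree. Assuming the higher Steenrod operations vanish coherently, I build a strictification of $\tau$ one layer at a time. The primary obstruction is the first higher Steenrod operation, which reassuringly recovers the known fact that $P^0$ must vanish (Proposition \ref{prop:spaces_are_never_commutative}), and coherent vanishing supplies the nullhomotopy needed to gauge $\tau$ into the next filtration stage. Having descended through stage $n$, the obstruction to descending through stage $n+1$ is the $(n+1)$-st higher Steenrod operation evaluated on the defining system assembled from the nullhomotopies chosen at the earlier stages; the coherence hypothesis is exactly what guarantees that this defining system exists and that the operation vanishes on it, furnishing the next gauge transformation. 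Assembling the gauge transformations---and checking that the cotriple filtration is complete so that the construction converges---produces a Maurer--Cartan element in the bottom filtration layer, i.e.\ a strict commutative dg-algebra quasi-isomorphic to $A$.

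The main obstacle is the identification step inside the induction: one must show that the deformation-theoretic obstruction cocycle at each stage coincides, including its indeterminacy, with the corresponding higher Steenrod operation, and that ``coherent vanishing'' is precisely the compatibility condition needed for the defining systems to propagate without generating new, unnamed obstructions. This is delicate because the $L_\infty$-structure on $\mathfrak{g}_A$ mixes filtration degrees, so one must verify that the higher brackets contribute no obstructions outside the distinguished Steenrod sub-collection---equivalently, that the non-Steenrod cotriple products, though generally nonzero, never enter the descent problem. Establishing this closure property, and pinning down the indeterminacy at each stage so that coherent vanishing is seen to be both necessary and sufficient, is where the real work lies.
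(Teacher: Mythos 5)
Your proposal takes a genuinely different route from the paper, but as written it has gaps that are not cosmetic. The paper's argument is far more elementary than your deformation-theoretic plan: it works entirely with Sullivan resolutions $(\E(\bigoplus_{i}V_i),d)\xrightarrow{\sim}A$. For the \emph{only if} direction, given a strict model $\bar A$, cofibrancy (Proposition \ref{prop:sullivan_algebras_respect_weak_eq}) gives a Sullivan map $f$ into $\bar A$; strict commutativity of $\bar A$ forces $f$ to factor through a symmetric algebra, and the splitting $V_i=X_i\oplus Y_i$ is built inductively by taking $Y_{k+1}$ to be the kernel of $V_{k+1}\xrightarrow{d}\E(\bigoplus_{i\le k}V_i)\to\Sym(\bigoplus_{i\le k}X_i)$; acyclicity of the kernel of the projection then follows from the nilpotence condition of Definition \ref{def:sullivan_algebra}. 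For the \emph{if} direction the projection $(\E(\bigoplus_i V_i),d)\to(\Sym(\bigoplus_i X_i),d)$ is surjective with acyclic kernel by hypothesis, hence a quasi-isomorphism onto a strictly commutative algebra, and that is the whole proof. Note in particular that the paper's notion of coherent vanishing \emph{is} the acyclicity of that kernel, so no identification of obstruction cocycles with named operations is ever needed.

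The gaps in your version are these. First, the Maurer--Cartan and gauge-equivalence formalism for convolution $L_\infty$-algebras is not available over $\mathbb F_p$ without substantial extra work: gauge actions and the integration of $L_\infty$-algebras require denominators, and the paper avoids this machinery for exactly that reason; you would at minimum need to justify why your filtered descent argument survives in positive characteristic. Second, and more seriously, the step you yourself flag as ``where the real work lies''---identifying the stage-$n$ obstruction cocycle with the $n$-th higher Steenrod operation including its indeterminacy, and proving the closure property that non-Steenrod cotriple products never enter the descent---is precisely the content of the theorem in your formulation, and it is left unproved. Third, your notion of coherent vanishing (mutually compatible nullhomotopies propagating defining systems) is never connected to the paper's Definition \ref{def:coherentvanishing}, so even if the induction were completed you would have proved a statement about a different hypothesis. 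Reformulating rectifiability as gauge equivalence of MC elements on a fixed underlying complex also quietly discards the zig-zags of quasi-isomorphisms that the actual definition allows. As it stands the proposal is a plausible programme rather than a proof.
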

The coherent vanishing condition, which is inspired by the following theorem of Deligne, Griffiths, Morgan and Sullivan.
\begin{theorem}
\cite{deligne75}
    Let $A$ be a commutative dg-algebra over $\mathbb Q$. Let $\mathfrak m = (\mathbb \Sym(\bigoplus_{i=0}^\infty V_i), d)$ be the minimal model for $A$. Then $A$ is formal if and only if, there is in each $V_i$ a complement $B_i$ to the cocycles $Z_i$, $V_i= Z_i \oplus B_i$, such that any closed form, $a$, in the ideal, $I((\bigoplus_{i=0}^\infty B_i)$, is exact. 
\end{theorem}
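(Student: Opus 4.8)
The plan is to reduce formality to the existence of a single CDGA quasi-isomorphism $\phi \colon \mathfrak m \to (H,0)$, where $H = H^\bullet(A)$, and then treat the two implications separately. The reduction is standard minimal model theory: a minimal CDGA quasi-isomorphic to $X$ is a minimal model for $X$, so since $\mathfrak m$ is a minimal model for $A$, the algebra $A$ is formal (weakly equivalent to $(H,0)$) if and only if $\mathfrak m$ is simultaneously a minimal model for $(H,0)$, which holds precisely when a quasi-isomorphism $\phi \colon \mathfrak m \to (H,0)$ exists. Throughout I write $Z = \bigoplus_i Z_i$ for the closed generators and $B = \bigoplus_i B_i$ for a complement, and I use the vector-space decomposition $\mathfrak m = \Sym(Z) \oplus I(B)$, where $I(B)$ is the ideal generated by $B$; note $\Sym(Z)$ consists entirely of cocycles, since $d$ vanishes on $Z$.

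For the ``if'' direction I would define $\phi$ on generators by $\phi(z) = [z] \in H$ for $z \in Z$ and $\phi(b) = 0$ for $b \in B$, extended multiplicatively; then $\phi$ annihilates $I(B)$ and sends a monomial $z_1 \cdots z_k \in \Sym(Z)$ to $[z_1 \cdots z_k]$. Writing $\pi_Z$ for the projection onto the $\Sym(Z)$-summand, the crux is that for any cocycle $x$ the element $x - \pi_Z(x)$ lies in $I(B)$ and is closed, hence exact by hypothesis, so $\phi(x) = \phi(\pi_Z(x)) = [\pi_Z(x)] = [x]$. Applying this to the cocycle $x = db$ gives $\phi(db) = [db] = 0$, so $\phi$ is a chain map; applying it to an arbitrary cocycle $x$ shows $\phi_\ast = \mathrm{id}_H$, so $\phi$ is a quasi-isomorphism and $A$ is formal. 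This direction uses the stated hypothesis in exactly the given form and needs no further input.

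For the ``only if'' direction I start from a quasi-isomorphism $\phi \colon \mathfrak m \to (H,0)$ and must manufacture the complements $B_i$. First, minimality forces $d\bigl(\bigoplus_i V_i\bigr) \subseteq \Sym^{\ge 2}\bigl(\bigoplus_i V_i\bigr)$, so no nonzero closed generator is exact; hence $[z] \neq 0$, and as $\phi_\ast$ is injective we get $\phi(z) \neq 0$ for $0 \neq z \in Z_i$, so $Z_i \cap \ker\phi = 0$. If I can arrange $\phi(V_i) = \phi(Z_i)$ for every $i$, then $B_i := \ker(\phi|_{V_i})$ is a genuine complement to $Z_i$ with $\phi(B_i) = 0$, whence $\phi(I(B)) = 0$; any closed $x \in I(B)$ then satisfies $\phi_\ast[x] = 0$, and injectivity of $\phi_\ast$ forces $x$ to be exact, which is exactly the required condition.

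The main obstacle is arranging $\phi(V_i) = \phi(Z_i)$, that is, modifying $\phi$ within its homotopy class so that it vanishes on a complement to the closed generators. I would do this by the standard stage-by-stage construction along the filtration $\mathfrak m(0) \subseteq \mathfrak m(1) \subseteq \cdots$ of the minimal model: having adjusted $\phi$ on $\mathfrak m(n)$, the obstruction to setting $\phi(v) = 0$ on the non-closed generators $v \in V_{n+1}$ is the cohomology class of $\phi(dv)$, and since $dv$ is exact in $\mathfrak m$ its class vanishes in $H = H^\bullet(\mathfrak m)$; the homotopy-lifting property for maps out of minimal Sullivan algebras then permits a homotopy correcting $\phi$ on $\mathfrak m(n)$ that cancels this class while preserving the quasi-isomorphism. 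Assembling these corrections coherently across all stages, and ensuring the quasi-isomorphism property survives in the colimit, is the delicate technical heart of the argument; it is here, rather than in the essentially formal ``if'' direction, that the real work lies.
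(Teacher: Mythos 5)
The paper itself does not prove this statement; it is quoted verbatim from Deligne--Griffiths--Morgan--Sullivan, so your attempt must be measured against the classical argument. Your ``if'' direction is complete and is exactly that argument: the decomposition $\mathfrak m = \Sym(Z)\oplus I(B)$, the projection $\phi$ killing $I(B)$, the observation that $x-\pi_Z(x)$ is a closed element of $I(B)$ (hence exact) for every cocycle $x$, and the conclusion $\phi_\ast=\mathrm{id}$ are all correct. Your reduction of formality to a single quasi-isomorphism $\phi\colon\mathfrak m\to (H,0)$, and the endgame of the ``only if'' direction (injectivity of $\phi_\ast$ forcing closed elements of $I(B)$ to be exact once $\phi(I(B))=0$), are also fine.

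The gap is in the step you yourself flag as the heart of the ``only if'' direction, and your obstruction analysis there is vacuous rather than merely incomplete. Since the target $(H,0)$ has zero differential, $\phi(dv)=d\phi(v)=0$ holds identically for \emph{every} dg-algebra map $\phi$ and every generator $v$; so ``the class of $\phi(dv)$ vanishes because $dv$ is exact'' is a tautology and is not the obstruction to redefining $\phi(v):=0$. The genuine obstruction is downstream: changing the value of $\phi$ on $v$ alters $\phi(dw)$ for every higher-stage generator $w$ whose differential involves $v$, and the conditions $\phi(dw)=0$ must be restored coherently at all later stages. This is not bookkeeping: there exist quasi-isomorphisms for which your recipe $B_i=\ker(\phi|_{V_i})$ fails outright. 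Take $H=\mathbb Q\{1,x,y,x^2\}$ with $|x|=2$, $|y|=3$ and $xy=x^3=y^2=0$; then $(H,0)$ is formal, and its minimal model has $V^4=\langle v\rangle$ with $dv=xy$ and no closed generators in degree $4$. Setting $\rho(x)=x$, $\rho(y)=y$, $\rho(v)=x^2$ defines a genuine quasi-isomorphism $\rho\colon\mathfrak m\to H$ (the compatibility conditions at all higher generators land in $H^{\geq 6}=0$, and $\rho_\ast=\mathrm{id}$ since no cocycle involves $v$ linearly), yet $\ker(\rho|_{V^4})=0$ is not a complement of $Z^4=0$ in $V^4$. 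So one cannot work with an arbitrary $\phi$, and some actual mechanism must replace your homotopy-correction sketch. The classical route avoids the issue: one builds a minimal model of $(H,0)$ directly, choosing the cohomology-hitting generators closed and sending each relation-killing generator to $0$ (which is permitted precisely because the target differential is zero), so the splitting holds by construction; one then transports it to the given $\mathfrak m$ via uniqueness of minimal models, a transport that itself requires care for the reason your example-resistant recipe shows. As it stands, your ``only if'' direction asserts rather than proves its key step.
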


\medskip

One can also ask whether the homotopy type of a commutative dg-algebra coincides with its homotopy type as an $E_\infty$-algebra. While this is true in characteristic 0, we believe that in positive characteristic the answer is no. This can be shown with the aid of third order cotriple products, and we intend to return to it in separate work.

\medskip

\subsubsection*{Structure of the article}
First we recall some preliminaries on  commutative dg-algebras, $E_\infty$-algebras and Steenrod operations. Then in Section 3, we define cotriple products, our non-linear generalisation of Massey products, and show that they are homotopy invariant for well-behaved classes of algebras. In Section 4, we study the case of strictly commutative dg-algebras and construct various counterexamples. Finally in Section 5, we prove our rectification result.\medskip

\subsubsection*{Acknowledgements}
I would like to thank my PhD advisor Grégory Ginot for numerous useful discussions and providing feedback on this paper. I would also like to thank Geoffroy Horel, Jos\'{e} Moreno-Fern\'{a}ndez and Fernando Muro for useful conversations and comments; and furthermore to the anonymous referee and Birgit Richter for their time, effort, and excellent advice. This project has received funding from the
European Union’s Horizon 2020 research and innovation programme under the Marie Skłodowska-
Curie grant agreement No 945322. \raisebox{-\mydepth}{\fbox{\includegraphics[height=\myheight]{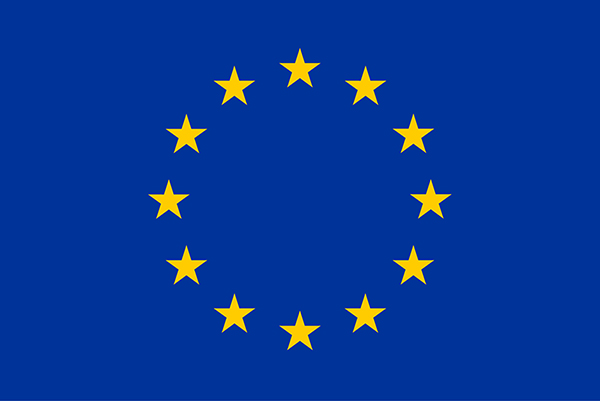}}}

\subsubsection*{Notation and conventions}
In this article, we work on the category of unbounded chain complexes over some base field or ring with cohomological convention.
That is, 
the differential $d:A^*\to A^{*+1}$ of a chain complex $\left(A,d\right)$
is of degree $1$.
The degree of a homogeneous element $x$ is denoted by $|x|$. 
The symmetric group on $n$ elements is denoted $\mathbb{S}_n$.
We follow the Koszul sign rule.
That is, the symmetry isomorphism $U\otimes V \xrightarrow{\cong} V \otimes U$ that identifies two graded vector spaces is given on homogeneous elements by
$u\otimes v\mapsto (-1)^{|u||v|} v\otimes u$.
Algebras over operads are always differential graded (dg) and cohomological.
We will frequently omit the adjective "dg" and assume it is implicitly understood.
Finally, when $A$ is a ring of characteristic $p$ for a prime $p$, $A^p$ will be the subring $\{a^p: a\in A \}$.

This is a short article and we do not intend to load it excessively with recollections; so therefore we refer to \cite{loday12} for the definition of an operad and other basic notions.

\section{Preliminaries}
\subsection{Three flavours of algebra over an operad}
\label{dividedpoweralgebras}
Divided power algebras were first introduced for the commutative operad by Cartan \cite{cartan54}, generalised to the general operadic setting by Fresse \cite{fresse00} and studied further by Ikonicoff \cite{Ikonicoff23}. In this section, 
we carefully define them and, in particular, we explain the free graded commutative divided powers algebra 
on a free module. 

Recall that an algebra $A$ over a operad $\P$ is defined to be an algebra over the following monad 
\[
\mathcal{P}(V) = \bigoplus_{n \geq 0} \left(\mathcal{P}(n) \otimes V^{\otimes n}\right)_{\mathbb S_n},
\]
where the coinvariants are taken with respect to the action of the symmetric group on $V^{\otimes n}$ given by permutation of factors in the tensor product. In characteristic zero, assuming $\P(n)$ is finitely generated as a representation of $\ms_n$, this is the same monad taking invariants
\[
\mathcal{P}(V) = \bigoplus_{n \geq 0} \left(\mathcal{P}(n) \otimes V^{\otimes n}\right)^{\mathbb S_n}.
\]
However, when we are not working over a field of characteristic zero, these notions do not coincide. This motivates the following definition.
\begin{definition}\cite{fresse00}
Let $A$ be dg-module over a commutative unital ring. We say that $A$ is a \emph{$\P$-algebra} if it is an algebra over the monad
    \[
\mathcal{P}(V) = \bigoplus_{n \geq 0} \left(\mathcal{P}(n) \otimes V^{\otimes n}\right)_{\mathbb S_n}.
\]
An algebra over the monad 
\[
\
\Gamma\mathcal{P}(V) = \bigoplus_{n \geq 0} \left(\mathcal{P}(n) \otimes V^{\otimes n}\right)^{\mathbb S_n},
\]
is referred to as a \emph{divided powers-$\P$ algebra.}
\end{definition}
    In general, there is a universal map from coinvariants to invariants, the norm map 
    $$
    \mathcal{P}(V) \to \Gamma\mathcal{P}(V)
    $$
    It follows that every divided powers $\P$-algebra is a $\P$-algebra. The image of the norm map is usually denoted $\Lambda \mathcal{P}(V)$ and is the third flavour of algebras.

\medskip

We shall mainly be interested in the case $\P = \mathsf{Com}$, so it will be useful to have explicit descriptions in this case.
\medskip
\begin{example}
    When $\P = \mathsf{Com}$ and $V = \mathbb F_p$, with the single basis element $x$,
    $$
    \Gamma \mathsf{Com}(V)= \begin{cases}
     \mathbb F_p[x_1, x_2, \dots]/(x_1^p, x_2^p, \dots ) &\mbox{with $|x_k| = k|x|$, when $|x|$ is even.} 
     \\
     \mathbb F_p[x]/(x^2) &\mbox{otherwise.}
    \end{cases}
    $$
\end{example}
\subsection{\texorpdfstring{$E_\infty$}{E-infinity}-algebras and Steenrod operations}
\label{sec:einfintyalge}
As a representation of $\mathbb S_n$, $\mathsf{Com}(n) = \mathbb k$ is not free. Unfortunately this means that the associated Schur functor behaves poorly in positive characteristic (see Example \ref{ex:symbadlybehaved}). The traditional approach to remedying this is to find a weak equivalence of operads $\E \xrightarrow{\sim}\mathsf{Com}$ such that, for each $n$, the action of $\mathbb S_n$ on $\E(n)$ is free. Any such operad is called an $E_\infty$-operad. The model we shall use in this article is the Barratt-Eccles operad; where the action of the symmetric group is free,  which we briefly recall. For more details, see \cite{berger04}.
\begin{definition}
\label{Def:BE}
The simplicial sets defining the Barratt-Eccles operad in each arity are of the form
$$
\E(r)_n = \{(w_0,\dots, w_n)\in \ms_r\times \cdots \times \ms_r \}
$$
equipped with face and degeneracy maps
\begin{align*}
    d_i(w_0, \dots,w_n) = (w_0, \dots, w_{i-1}, \hat{w_i}, w_{i+1}, \dots, w_n)\\
    s_i(w_0, \dots,w_n) = (w_0, \dots, w_{i-1}, w_i, w_i, w_{i+1}, \dots, w_n).
\end{align*}
The group $\ms_r$ acts on $\E(n)$ diagonally, that is to say if $\sigma \in \ms_n$ and $(w_0,\dots, w_n) \in \E(n)$ then
$$
(w_0,\dots, w_n)\ast \sigma = (w_0\ast \sigma,\dots, w_n\ast \sigma)
$$

Finally the compositions are also defined componentwise via the explicit composition law of 
\begin{gather*}
\label{co}
    \gamma: \ms(r)\otimes\ms(n_1)\otimes\cdots\otimes\ms(n_r)\to\ms(n_1+\cdots+n_r)
\\
    (\sigma, \sigma_1, \dots, \sigma_r)\mapsto \sigma_{n_1\cdots n_r} \circ (\sigma_1 \times \cdots\times \sigma_r)
\end{gather*} 
where $\sigma_{n_1\cdots n_r}$ is the permutation that acts on $n_1+\cdots+ n_r$ elements, by dividing them into $r$ blocks, the first of length $n_1$, the second of length $n_2$ and so on. It then rearranges the blocks according to $\sigma$, maintaining the order within each block. 
\end{definition}
\begin{remark}
    The Barratt-Eccles operad as defined above is an operad in simplicial sets. However it becomes an operad concentrated in graded chain complexes, concentrated in non-negatively degrees, after applying the singular chains functor.  When we work in cohomological grading and cochain complexes, it is therefore concentrated in non-positive degrees and is unbounded below. In this article, the notation $\E$ shall always refer to the operad in chain complexes.
\end{remark}
One can check that the free algebra functor $\E(-)$ respects homotopy equivalences. In other words, if $V \xrightarrow{\sim} W$ is a homotopy equivalence of dg-modules; then $\E(V)\xrightarrow{\sim}\E(W)$ is a homotopy equivalence. Furthermore, the cohomology of $\E(V)$ has an additional grading induced by the operadic degree 
$$
\E(V)_i = \bigoplus_{r=1}^\infty\E(r)_i \otimes_{\ms_r} V^{\otimes r}.
$$
There is then an isomorphism $\Sym\left(H^*\left(V\right)\right) \xrightarrow{\sim}
H^*\E(V)_0,$  where $\Sym(-)$ is the symmetric algebra functor.
\begin{definition} \cite{may70}
    Let $V$ be a dg-module over $\mathbb F_p$. The Steenrod algebra on $V$ is the cohomology group $\mathcal A(V) = H^*(\E(V))$. 
\end{definition}
\begin{remark}
    Let $V$ be a non-negatively graded dg-module. Then the Steenrod algebra $\mathcal A(V)$ will not be bounded below. However, if $V$ is a non-positively graded dg-module, $\mathcal A(V)$ will also be concentrated in non-positive degrees.
\end{remark}
Given an $E_\infty$-algebra $A$, one has a map
$$
\mathcal A(H^*(A)) \xrightarrow{H^\ast(\gamma)} H^*(A).
$$
where $\gamma$ is the $E_\infty$-algebra map $\E(A)\xrightarrow{\gamma} A$.
In other words, the cohomology $H^\ast(\gamma)$ of an $E_\infty$-algebra always carries an action of $\mathcal A$.
\subsubsection{Rectification}
There is a weak equivalence of operads $\phi : \mathcal E \xrightarrow{\sim} \mathsf{Com},$ so it is natural to ask whether or not the pair $(\phi^*,\phi_!)$ forms a Quillen equivalence
between $E_\infty$-algebras and Com-algebras. If there is, then \emph{rectification} is said to
occur. With coefficients in $\mathbb Q,$ this is indeed the case; see for example \cite{white17}.
In particular, this implies that that every $E_\infty$-algebra $A$ has a commutative model given by $\phi_!(A).$
\subsubsection{The homotopy theory of $E_\infty$-algebras and commutative dg-algebras}
In this subsection we shall discuss the existence of model structures on categories of $\P$-algebras and specialise to the cases of $E_\infty$-algebras. One has the following general fact.
\begin{theorem}\cite{hinich01} \label{modelcategory}
Let $\P$ be an $\ms$-split (or cofibrant) operad over a commutative ring $R$. Then the category of $\P$-algebras over $R$ is a closed model category with quasi-isomorphisms as the weak equivalences and surjective maps as fibrations.
\end{theorem}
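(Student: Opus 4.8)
The plan is to obtain the model structure by \emph{transfer} along the free-forgetful adjunction
\[
\P(-) : \mathsf{Ch}_R \rightleftarrows \P\text{-}\mathsf{alg} : U,
\]
where $\mathsf{Ch}_R$ is the category of unbounded chain complexes over $R$ equipped with its projective model structure: quasi-isomorphisms as weak equivalences, degreewise surjections as fibrations, and the usual generating (acyclic) cofibrations $I$ and $J$ with small domains. One declares a morphism of $\P$-algebras to be a weak equivalence, respectively a fibration, exactly when $U$ carries it to one; the cofibrations are then determined by the left lifting property. With these definitions the weak equivalences are precisely the quasi-isomorphisms and the fibrations the surjections, as claimed, so the substance of the theorem is that these three classes satisfy the model category axioms.

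First I would check that $\P\text{-}\mathsf{alg}$ is complete and cocomplete: limits are created by $U$, and colimits exist because $U$ preserves filtered colimits and reflexive coequalizers while $\P(-)$ supplies free objects. I would then invoke the standard transfer theorem for cofibrantly generated model categories, which produces the desired structure with generating sets $\P(I)$ and $\P(J)$ provided two conditions hold: (i) the domains of $\P(I)$ and $\P(J)$ are small relative to the associated cell complexes, which follows from smallness in $\mathsf{Ch}_R$ since both $U$ and $\P(-)$ commute with the relevant filtered colimits; and (ii) every relative $\P(J)$-cell complex is a weak equivalence. Condition (i) is routine, and the whole difficulty is concentrated in (ii).

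To verify (ii) I would reduce, by closure of quasi-isomorphisms under the transfinite compositions in question, to a single pushout of a generating acyclic cofibration, namely a cocartesian square
\[
\begin{tikzcd}
\P(K) \arrow[r] \arrow[d] & A \arrow[d] \\
\P(L) \arrow[r] & B
\end{tikzcd}
\]
with $K \hookrightarrow L$ in $J$, and show that $A \to B$ is a quasi-isomorphism. The underlying complex of $B = A \sqcup_{\P(K)} \P(L)$ admits a natural filtration $A = B_0 \to B_1 \to \cdots$ whose successive quotients have the form $M_t \otimes_{\ms_t} (L/K)^{\otimes t}$, where $M_t$ is the arity-$t$ component of the enveloping operad of $A$, built from the arity pieces $\P(n)$ and tensor powers of $A$. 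Since $K \hookrightarrow L$ is an acyclic cofibration, the cofiber $L/K$ is acyclic (for the generators $0 \to D^n$ of $J$ it is simply the contractible disk), hence so is each $(L/K)^{\otimes t}$ and each tensor product $M_t \otimes (L/K)^{\otimes t}$.

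The main obstacle, and the precise point where the hypothesis on $\P$ enters, is that in positive characteristic the coinvariants functor $(-)_{\ms_t}$ is not exact, so acyclicity of $M_t \otimes (L/K)^{\otimes t}$ need not persist after passing to $\ms_t$-coinvariants. This is exactly what the $\ms$-split (or cofibrant) assumption is designed to rectify: it guarantees that each $\P(n)$, and hence each $M_t$, is projective as an $R[\ms_t]$-module when $\P$ is $\ms$-cofibrant, or more generally that a suitable equivariant splitting is available when $\P$ is $\ms$-split, so that $M_t \otimes_{\ms_t}(-)$ is exact and preserves acyclic complexes. Granting this, every filtration quotient is acyclic, whence $A \to B$ is a quasi-isomorphism; reassembling the pushouts and transfinite compositions then yields (ii) and completes the transfer. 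In characteristic zero such a splitting always exists, so the hypothesis becomes vacuous and one recovers the classical statement.
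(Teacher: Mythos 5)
The paper does not prove this statement; it is quoted verbatim from Hinich's paper \cite{hinich01} and used as a black box. Your sketch is correct and is essentially the argument of that cited source: transfer along the free--forgetful adjunction, reduction to pushouts of generating acyclic cofibrations, the filtration of $A \sqcup_{\P(K)} \P(L)$ with graded pieces built from the enveloping operad, and the $\ms$-split (or $\ms$-cofibrant) hypothesis entering exactly where coinvariants must preserve acyclicity.
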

The Barratt-Eccles operad is $\ms$-split. This immediately gives the model structure on $E_\infty$-algebras over $\mathbb F_p.$
\begin{definition}
    The model category $E_\infty \mathsf{-alg}$ of $E_\infty$-algebras is the category of algebras over the operad given by cochains on the Barratt-Eccles operad, in chain complexes over $\mathbb F_p$. It has  quasi-isomorphisms of cochain complexes as weak equivalences and surjective maps as fibrations.
\end{definition}

\medskip

We have already mentioned that in characteristic 0, the homotopy theory of commutative dg-algebras coincides with that of $E_\infty$-algebras.  In positive characteristic the relationship is much more complex.

\medskip

Quasi-isomorphisms of commutative dg-algebras are those algebra maps that are quasi- isomorphisms of chain complexes. The category of commutative algebras thus has a well-defined homotopy category given by localising at these quasi-isomorphisms.

\medskip
\section{Cotriple products}
\subsection{Sullivan algebras}
In this subsection, we explain how to construct a semi-free, and therefore, in the presence of a model category, cofibrant, resolution of an algebra over an arbitrary operad $\P$. The following results are likely well known to experts but we could not locate a proof in the literature. 

\begin{definition}
\label{def:sullivan_algebra}
    Let $\P$ be an operad over a field and let $A$ be a $\P$-algebra. A \emph{Sullivan model (or resolution)} for $A$ is a quasi-free algebra $(\P(\bigoplus_{i = 0}^\infty V_i), d)$ and a map $f: (\P(\bigoplus_{i = 0}^\infty V_i), d) \xrightarrow{\sim} A$ such that
    \begin{itemize}
    \item 
        the map $f|_{V_0}: V_0\to A$ is a is a weak equivalence of dg-vector spaces. 
        \item 
        the differential satisfies $d(V_k) \subseteq \P(\bigoplus_{i = 0}^{k-1} V_i)$ \emph{(the nilpotence condition)}. In particular $V_0 = H^*(A).$
        \item 
           the map $V_k\oplus (\P(\bigoplus_{i = 0}^{k-1} V_i) \to A$ is a weak equivalence for each $k.$
    \end{itemize}
\end{definition}
Every $\P$-algebra $A$  over a field admits a Sullivan model. The argument is essentially the same as \cite[Proposition 12.1]{felix01}.
\begin{proposition}
\label{prop:sullivan_model}
Suppose either that $\P$ be an operad over a field and $A$ is an arbitrary $\P$-algebra or that $\P$ is an operad over a ring and $A$ is  $\P$-algebra equipped with a weak equivalence of dg-modules map $H^*(A) \xrightarrow{\sim} A$. Then $A$ has a Sullivan resolution $m:(\P(V), d) \xrightarrow{\sim} A$. 
\end{proposition}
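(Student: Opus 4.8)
The plan is to construct the resolution by the inductive cell-attachment procedure that produces minimal Sullivan models in rational homotopy theory, adapted to an arbitrary dg-operad, running parallel to \cite[Proposition 12.1]{felix01}. Writing $A_{(k)} = (\P(\bigoplus_{i=0}^{k} V_i), d)$ for the partial resolution after the $k$-th stage, I build a compatible family of $\P$-algebra maps $m_k \colon A_{(k)} \to A$, each surjective on cohomology. For the base step I set $V_0 = H^*(A)$ with zero differential; over a field a choice of cocycle representatives gives a quasi-isomorphism $\ell_0 \colon V_0 \to A$, while over a ring $\ell_0$ is exactly the hypothesis $H^*(A) \xrightarrow{\sim} A$. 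Freeness of $\P(-)$ extends $\ell_0$ to $m_0 \colon (\P(V_0),0) \to A$, and since each class of $H^*(A)$ is hit by the cocycle $w \in V_0 \subseteq \P(V_0)$, the map $H^*(m_0)$ is surjective; the nilpotence condition $d(V_0)=0$ is immediate.

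For the inductive step, suppose $m_k \colon A_{(k)} \to A$ is constructed and surjective on cohomology, and let $K_k = \ker H^*(m_k)$. I choose cocycles $z_\alpha \in A_{(k)}$ whose classes generate $K_k$ (a basis over a field, a generating set over a ring), and, using $[m_k(z_\alpha)] = 0$ in $H^*(A)$, choose $a_\alpha \in A$ with $d a_\alpha = m_k(z_\alpha)$. Let $V_{k+1}$ be the free module on symbols $v_\alpha$ with $|v_\alpha| = |z_\alpha| - 1$, extend $d$ as a derivation by $d v_\alpha = z_\alpha \in A_{(k)}$, and extend $m_k$ to an algebra map $m_{k+1}$ by $m_{k+1}(v_\alpha) = a_\alpha$. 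One then checks the routine invariants: $d^2 v_\alpha = d z_\alpha = 0$; the map $m_{k+1}$ is a chain map because $m_{k+1}(d v_\alpha) = m_k(z_\alpha) = d a_\alpha = d\, m_{k+1}(v_\alpha)$; the nilpotence condition $d(V_{k+1}) \subseteq \P(\bigoplus_{i\le k} V_i)$ holds by construction; $H^*(m_{k+1})$ remains surjective since it extends $m_k$; and every generator of $K_k$ becomes exact in $A_{(k+1)}$, so $K_k$ dies in $H^*(A_{(k+1)})$.

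Finally I set $V = \bigoplus_{i=0}^\infty V_i$ and $m = \varinjlim_k m_k \colon (\P(V),d) \to A$. Because $\P(V) = \varinjlim_k A_{(k)}$ along the inclusions and homology commutes with filtered colimits, $H^*(\P(V)) = \varinjlim_k H^*(A_{(k)})$. Surjectivity of $H^*(m)$ is inherited from $H^*(m_0)$. For injectivity, a class with $H^*(m)$-image zero is represented by some $\xi \in H^*(A_{(k)})$ with $H^*(m_k)(\xi)=0$, i.e. $\xi \in K_k$; by the inductive step $\xi$ maps to $0$ in $H^*(A_{(k+1)})$, hence to $0$ in the colimit. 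Thus $m$ is a quasi-isomorphism satisfying the conditions of Definition \ref{def:sullivan_algebra}.

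The step I expect to carry the real content is the passage to the colimit. The delicate point is that attaching $V_{k+1}$ to kill $K_k$ typically creates new cohomology, so the process genuinely does not terminate; one must both allow infinitely many $V_i$ (legitimate here since we work with unbounded complexes and have no degreewise finiteness to exploit) and verify that every kernel class is born at a finite stage and killed at the next, which is precisely what exactness of filtered colimits buys. The only other hypothesis-sensitive point is the base case: over a ring the comparison $H^*(A) \to A$ need not exist, which is exactly why it is imposed as an assumption, whereas over a field it is automatic.
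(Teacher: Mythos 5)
Your proposal is correct and follows essentially the same route as the paper's proof: the same base case $V_0 = H^*(A)$, the same inductive cell-attachment killing a basis (or generating set) of $\ker H^*(m_k)$, and the same colimit argument that every kernel class is born at a finite stage and killed at the next. The only cosmetic difference is that you make the filtered-colimit justification and the field-versus-ring dichotomy in the base case explicit, which the paper leaves implicit.
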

\begin{proof}
    Let $V_0= H^\ast(A)$ and choose a map 
    $$
    m_0: (\P(V_0), 0) \to A
    $$
    such that $V_0\to A$ is an isomorphism on cohomology.
    
    Suppose that $m_0$ has been extended to $m_k: \left( \P \left( \bigoplus_{i=0}^k V_i \right), d\right) \to A $. Let $z_\alpha$ be cocycles in $\P \left( \bigoplus_{i=0}^\infty V_i \right)$ such that $[z_\alpha]$ is a basis for $\ker H(m_k).$ Let $V_{k+1}$ be a graded space with basis $\{v_\alpha\}$ in $1-1$ correspondence with the $z_\alpha,$ and with $|v_\alpha| = |z_\alpha| - 1.$ Extend $d$ to a derivation in  $\P \left( \bigoplus_{i=0}^k V_i \right)$ by setting $dv_\alpha = z_\alpha.$ Since $d$ has odd degree, $d^2$ is a derivation. Since $d^2v_\alpha = dz_\alpha = 0$, we have that $d^2 = 0.$
    
\medskip

    Since $H(m_k)[z_\alpha] = 0,$ $m_kz_\alpha = da_\alpha, a_\alpha\in A.$ Extend $m_k$ to a graded $\P$-algebra morphism $m_{k+1}:\P \left( \bigoplus_{i=0}^{k+1} V_i \right) \to A $ by setting $m_{k+1}v_\alpha = a_\alpha.$ Then $m_{k+1}dv_\alpha = dm_{k+1} v_\alpha,$ and so we havc $m_{k+1}d = d m_{k+1}.$

    \medskip

    We conclude our construction by setting $V = \bigoplus^\infty_{i=0} V_i.$ We have a map $m : (\P(V), d) \xrightarrow{\sim} A$ such that $m|_{V_k} = m_k.$ Since $m|_{\P(V_0)} = m_0$ and $H(m_0)$ is surjective, $H(m)$ is surjective too. If $H(m)[z] = 0$ then, since $z$ must be in some $\P \left( \bigoplus_{i=0}^k V_i \right),$ one has $H(m_k)[z] = 0$. But then, by construction $z$ is a coboundary in $\P \left( \bigoplus_{i=0}^{k+1} V_i \right),$ and so is a coboundary in $(\P(V), d).$ Therefore $H(m)$ is an isomorphism.

\medskip
    
The nilpotence condition on $d$ is built into the construction.
\end{proof}
\begin{definition}
    We refer to the semifree algebra appearing in the previous proof
    $$
    m_k: \left( \P \left( \bigoplus_{i=0}^k V_i \right), d\right) \to A 
    $$
    as a \emph{step $k$ Sullivan resolution} of $A.$ 
\end{definition}
Sullivan algebras behave like cofibrant objects, in the sense that they have the left lifting property against all  surjective weak equivalences.
\begin{theorem}
\label{thm:llp}
    Let $m:(\P(V), d)$ be a Sullivan algebra. Then the map $0 \to (\P(V), d)$ has the left lifting property against all  surjective weak equivalences $p : C\to D$.
\end{theorem}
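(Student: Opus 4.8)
The plan is to exploit the quasi-freeness of $(\P(V),d)$ together with the nilpotence condition $d(V_k)\subseteq \P(\bigoplus_{i=0}^{k-1}V_i)$ and build the lift one filtration layer at a time. Since the source of the vertical map is the initial $\P$-algebra $0$, the upper edge of any lifting square is forced and the only content is the following: given a $\P$-algebra map $g\colon (\P(V),d)\to D$, produce a $\P$-algebra map $\tilde g\colon (\P(V),d)\to C$ with $p\tilde g=g$. Because $\P(V)$ is free as a graded $\P$-algebra, such a $\tilde g$ is the same data as a degree-preserving linear map $V\to C$ that intertwines the differentials; so it suffices to specify $\tilde g$ on the generating space $V=\bigoplus_{i\geq 0}V_i$ compatibly with $d$ and $g$.

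The one external input I would isolate first is that $\Ker p$ is acyclic. This is immediate from the long exact cohomology sequence attached to the short exact sequence $0\to \Ker p\to C\xrightarrow{p} D\to 0$ (short exact because $p$ is surjective), using that $p$ is a quasi-isomorphism. This acyclicity is the mechanism that makes every obstruction encountered below vanish.

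Next I would construct $\tilde g$ by induction on $k$, defining it on $\bigoplus_{i=0}^{k}V_i$. Assume $\tilde g_k\colon (\P(\bigoplus_{i=0}^{k}V_i),d)\to C$ is a $\P$-algebra chain map with $p\tilde g_k=g$ on this subalgebra; for $k=0$ one has $d(V_0)=0$, so the base case is just the lifting of cocycles, recovered from the step below with $w=0$. For a basis element $v\in V_{k+1}$ the nilpotence condition gives $dv\in \P(\bigoplus_{i=0}^{k}V_i)$, so $w:=\tilde g_k(dv)\in C$ is defined; it is a cocycle since $dw=\tilde g_k(d^2 v)=0$, and it satisfies $p(w)=g(dv)=d\,g(v)$. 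I then want $c_v\in C$ with $d c_v=w$ and $p(c_v)=g(v)$. Choosing any $u\in C$ with $p(u)=g(v)$ by surjectivity, the element $du-w$ lies in $\Ker p$ and is a cocycle there; acyclicity yields $s\in \Ker p$ with $ds=du-w$, and $c_v:=u-s$ solves both equations. Setting $\tilde g_{k+1}(v):=c_v$ and extending freely gives the required extension, and passing to the union over $k$ produces $\tilde g$ on all of $(\P(V),d)$, completing the lift.

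The main obstacle is not any single computation but the bookkeeping that makes the induction legitimate: one must check that specifying $\tilde g_{k+1}$ on generators genuinely extends to a $\P$-algebra map commuting with $d$ (which uses freeness together with the fact that $d$ is the derivation determined by its values on $V$), and that the resulting colimit map is well defined and still lifts $g$. The conceptually delicate step is the inductive solution of the pair of equations $d c_v=w$, $p(c_v)=g(v)$; this is exactly where surjectivity of $p$ and acyclicity of $\Ker p$ are both used, and it is the reason no higher obstruction to lifting ever arises.
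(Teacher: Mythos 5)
Your proof is correct, and it follows the same overall strategy as the paper's: induct over the Sullivan filtration $V_0\subseteq V_0\oplus V_1\subseteq\cdots$, use freeness to reduce to defining the lift on generators, and solve the pair of conditions $dc_v=w$, $p(c_v)=g(v)$ at each stage. Where you differ is in the mechanism for that last step. The paper first produces $c'_\alpha$ with $dc'_\alpha=h(z_\alpha)$ and then corrects it by a preimage of a primitive of $g(v_\alpha)-p(c'_\alpha)$ in $D$; this requires knowing that $h(z_\alpha)$ is exact in $C$ and that $g(v_\alpha)-p(c'_\alpha)$ is nullhomologous in $D$, the first of which needs the quasi-isomorphism hypothesis run through an extra argument and the second of which is only true after a further correction of $c'_\alpha$ by a cocycle of $C$ (the paper elides both points, and its base case likewise does not arrange $dc=0$ for $v\in V_0$). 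You instead isolate the acyclicity of $\Ker p$, obtained from the long exact sequence of $0\to\Ker p\to C\xrightarrow{p}D\to 0$, and observe that $du-w$ is a cocycle in $\Ker p$, so that a single application of acyclicity produces the correction $s$ with $c_v=u-s$ satisfying both equations at once; your treatment of $V_0$ as the case $w=0$ is then automatic. This is a cleaner packaging of the same idea: it buys you a uniform argument with no case distinction and no unjustified exactness claims, at the modest cost of introducing the kernel complex as an auxiliary object.
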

\begin{proof}
We have the following diagram.
    $$
    \begin{tikzcd}
        0 \rar{f} \dar{i} & C \arrow[d, "p"]
        \\
        (\P(V), d) \arrow[r, "g"] \arrow[ru, dotted, "h"] & D
    \end{tikzcd}
    $$
   The map $h$, is determined by the image of the generators $v\in V$. We do this inductively. For $v\in V_0$, we define $h_0: \P(V_0) \to C$ by  $f(v) = c$ where $p(c) = g(v),$ which is defined since $p$ is surjective. Now assume that we have defined $\left( \P \left( \bigoplus_{i=0}^k V_i \right), d\right) \to C.$  For $v_\alpha \in V_{k+1}$, we claim it possible to find $c_\alpha \in C$ such that  $ p(c_\alpha) = g(v_\alpha)$ and $d c_\alpha = h(z_\alpha).$ Let $dv_\alpha =z_\alpha$, then $h(z_\alpha)$ is a cocycle. Choose any lift $c'_\alpha$ such that $dc'_\alpha = h(z_\alpha).$ Moreover, we have $p(h(z_\alpha)) = g(z_\alpha) = g(d v_\alpha) = dg(v_\alpha)$. Therefore $g(v_\alpha)-p(c'_\alpha) $ is nullhomologous in $D.$ There is therefore $K_\alpha \in D$ such that $dK_\alpha =g(v_\alpha)-p(c'_\alpha).$ The cochain $K_\alpha$ has a preimage in $L_\alpha\in C$ and we define $c_\alpha = L_\alpha + c'_\alpha.$  Finally, we define $h(v_\alpha) = c_\alpha$.
\end{proof}
\begin{remark}
    In particular, the surjective weak equivalences are the acyclic fibrations in the setting of Theorem \ref{modelcategory}. Therefore, Sullivan models are cofibrant objects in this model category. 
\end{remark}
\begin{remark}
\label{rem:sullivan_algebras_respect_weak_eq}
An observation that will be important later is that, when $g: (\P(V), d) \to D$ is a Sullivan resolution, the map $h$ constructed in the proof also satisfies the conditions of Definition \ref{def:sullivan_algebra}. 
\end{remark}
In what follows, we say that an operad $\P$ \emph{reflects homotopy equivalences} if the free algebra functor $\P(-)$ sends quasi-isomorphisms of cochain complexes to quasi-isomorphisms of $\P$-algebras.
\begin{proposition}
\label{prop:sullivan_algebras_respect_weak_eq}
    Let $\P$ be an operad over a field such that $\P$ reflects homotopy equivalences. Let $A$ be a $\P$-algebra and let $m:(\P(V), d) \xrightarrow{\sim} A$ be a Sullivan resolution. Then, for any $\P$-algebra $B$ weakly equivalent to $A$ there exists a $m'$ such that $m':(\P(V), d) \xrightarrow{\sim} B$ is a Sullivan resolution.
\end{proposition}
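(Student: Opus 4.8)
The plan is to split the statement into two essentially independent observations: first, that for a \emph{fixed} Sullivan algebra $(\P(V),d)$ the defining conditions of a Sullivan resolution become automatic as soon as the structure map is a quasi-isomorphism; and second, that a quasi-isomorphism $(\P(V),d)\xrightarrow{\sim} B$ exists whenever $A$ and $B$ are weakly equivalent. Granting both, the proposition follows at once.

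For the first observation, note that the nilpotence condition of Definition \ref{def:sullivan_algebra} is intrinsic to $(\P(V),d)$ and never mentions the target. Each remaining condition — that $V_0\to A$ and that every $V_k\oplus\P(\bigoplus_{i=0}^{k-1}V_i)\to A$ be a weak equivalence — is obtained by postcomposing a fixed map out of $(\P(V),d)$ (the inclusion $V_0\hookrightarrow\P(V)$, respectively the canonical map of the $k$-th piece into $\P(V)$) with the resolution map $m$. Since $m$ is itself a quasi-isomorphism, the two-out-of-three property shows that these conditions hold for $m$ exactly when the corresponding intrinsic inclusions are quasi-isomorphisms, which is a property of $(\P(V),d)$ alone. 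Hence for any quasi-isomorphism $m':(\P(V),d)\xrightarrow{\sim} B$, postcomposing the same intrinsic inclusions with $m'$ again yields quasi-isomorphisms, so $m'$ automatically satisfies every clause of Definition \ref{def:sullivan_algebra}. It therefore suffices to produce a single quasi-isomorphism $(\P(V),d)\xrightarrow{\sim} B$.

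For the second observation, I unwind ``weakly equivalent'' into a finite zigzag of quasi-isomorphisms of $\P$-algebras and induct, so that I may assume $A$ and $B$ are joined by one quasi-isomorphism $f$. If $f:A\xrightarrow{\sim} B$ points forward, then $m'=f\circ m$ is the desired quasi-isomorphism. If $f:B\xrightarrow{\sim} A$ points backward, the idea is to lift $m$ through $f$. When $f$ is surjective this is immediate: Theorem \ref{thm:llp} supplies a lift $m':(\P(V),d)\to B$ with $fm'=m$, which is a quasi-isomorphism by two-out-of-three (this is the content of Remark \ref{rem:sullivan_algebras_respect_weak_eq}). For a general backward quasi-isomorphism I first replace it by a surjective one by factoring $f$ through a mapping cocylinder $N_f=B\times_A A^I$: this yields a surjective quasi-isomorphism $N_f\twoheadrightarrow A$ together with a quasi-isomorphism $N_f\to B$ retracting the canonical section $B\to N_f$. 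Lifting $m$ through $N_f\twoheadrightarrow A$ via Theorem \ref{thm:llp} and composing with $N_f\to B$ produces the required quasi-isomorphism $m':(\P(V),d)\xrightarrow{\sim} B$.

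The main obstacle is precisely the construction of the surjective replacement $N_f\twoheadrightarrow A$ in positive characteristic. The path object $A^I$ normally built from the polynomial de Rham interval is no longer contractible over $\mathbb F_p$, so one cannot import it blindly; instead the cocylinder must be assembled by hand, and it is exactly here that the hypothesis that $\P$ reflects homotopy equivalences is used, to certify that the auxiliary free pieces entering the construction remain acyclic and hence that the comparison maps are genuine quasi-isomorphisms. Once this is in place, everything else is a formal consequence of the two-out-of-three property and the lifting property established in Theorem \ref{thm:llp}.
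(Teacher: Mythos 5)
Your overall skeleton is the same as the paper's: reduce to a single quasi-isomorphism by inducting along the zig-zag, compose in the forward direction, and in the backward direction replace $f:B\xrightarrow{\sim}A$ by a surjective quasi-isomorphism so that Theorem \ref{thm:llp} applies, then project back to $B$. Your preliminary observation that the Sullivan conditions transfer automatically to any quasi-isomorphism out of the \emph{same} $(\P(V),d)$ by two-out-of-three is correct and is a cleaner justification than the paper's implicit appeal to Remark \ref{rem:sullivan_algebras_respect_weak_eq}.

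However, there is a genuine gap exactly where you flag ``the main obstacle'': you never construct the surjective replacement. You invoke a mapping cocylinder $N_f = B\times_A A^I$, but the existence of a path object $A^I$ is not available for free here --- the proposition is stated for an arbitrary operad over a field that reflects homotopy equivalences, and the only model-structure result in the paper (Theorem \ref{modelcategory}) requires $\P$ to be $\ms$-split or cofibrant. Saying that the cocylinder ``must be assembled by hand'' and that the reflection hypothesis ``certifies the auxiliary free pieces are acyclic'' is a placeholder, not an argument; as written, the backward case is unproved. The paper avoids path objects entirely with a more elementary device: choose a graded complement $W$ to $\operatorname{Im} f$ in $A$, enlarge it to an acyclic subcomplex $W'$ by adjoining the missing coboundaries, and form $B\oplus\P(W')$. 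The induced map $f':B\oplus\P(W')\to A$ is then surjective by construction and a quasi-isomorphism because $\P$ reflects homotopy equivalences (so the freely adjoined acyclic piece contributes nothing to cohomology), and the projection $\pi:B\oplus\P(W')\to B$ is likewise a quasi-isomorphism; one lifts $m$ through $f'$ via Theorem \ref{thm:llp} and sets $m'=\pi\circ h$. If you replace your cocylinder paragraph with this construction (or actually build a path object and prove it works in this generality), your proof goes through.
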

\begin{proof}
    The $\P$-algebra $B$ must be connected to $A$ via zig-zags of quasi-isomorphisms. So it suffices to show that Sullivan resolutions can be transferred across quasi-isomorphisms in both directions. So if there is a quasi-isomorphism $f:A\xrightarrow{\sim} B$, then $m' = f\circ m$. Now, suppose there is a quasi-isomorphism $f:B\xrightarrow{\sim} A.$ One can associate an acyclic fibration to $f$ by choosing an acyclic complement $W$, in the category of graded vector spaces,  in $A$ to the image of $f$ ie.\ $A = \operatorname{Im} f \oplus W$ as graded vector spaces. This is not necessarily a chain complex as it may not contain $dw$ for each $w\in W$ but an acyclic chain complex $W'$ may be obtained from it by directly adding such missing coboundaries. Then the map $f':B\oplus \P(W')\to A$ is an acyclic fibration and so is the projection $\pi: B\oplus \P(W') \to B.$ The desired map is then $m' = \pi\circ h $, where $h:(\P(V), d) \xrightarrow{\sim} B\oplus \P(W')$ is the Sullivan resolution coming from applying from Remark \ref{rem:sullivan_algebras_respect_weak_eq} to $f'$.
\end{proof} 
\subsection{Cotriple products in positive characteristic}

In this subsection we introduce a theory of higher Massey-like products for algebras over operads over a field of arbitrary characteristic paralleling that of \cite{massey58, Mur21, flynn23}. The operads here are permitted to have a differential. The underlying idea is similar to that in \cite{flynn23}; we simply wish to define a higher operation for every syzygy. In the setting of Koszul operads over $\mathbb Q$, the existence of minimal models for operads made this straightforward. In this context, Massey products correspond to differentials in the Eilenberg-Moore spectral sequence.

\medskip

In the positive characteristic setting, things are much more complicated. The existence of the Frobenius map produces syzygies that mix the algebra and the operad in ways that were not possible in zero characteristic. We therefore define a non-linear generalisation of a Massey product called a \emph{cotriple product} that captures this phenomenon.

\medskip 

\noindent{\bf{The cotriple resolution. }}
Let $\P$ be an operad and let $A$ be a $\mathcal P$-algebra. The \emph{cotriple resolution} $\mathsf{Res}_{\P}(A)$, which is a model for $A$ in the category of free $\P$-algebras, is a simplicial $\P$-algebra defined as follows.
In simplicial degree $n$, one has
$$
\mathsf{Res}_{\P}(A)_n =
    \P^{\circ (n+1)}(A)
$$
where $\P^{\circ i}$ indicates that the free algebra functor is applied $i$ times. The face maps are given by
$$
d_n^i:  \P^{\circ (n+1)}(A) \to  \P^{\circ n}(A)
$$
$$
d_n^i = \begin{cases}        
    \P(A)^{\circ i}\circ \gamma_{\P}\circ  \P(A)^{\circ (n-i-1)}(\Bbbk) & \mbox{for } n  = 0, \dots, n-1.
    \\
    \P^{\circ (n-1)}\circ \gamma_A & \mbox{for } i = n.  
\end{cases}
$$
where $\gamma_\P$ is the operadic composition map and $\gamma_A: \P(A)\to A$ is the $\P$-algebra map. The degeneracy maps are defined by
$$
s_n^i: \P^{\circ (n+1)}(A) \to \P^{\circ (n+2)}(A)
$$
$$
s^i_n = \P^{\circ i}\circ u \circ \P^{\circ (n-i)}(A)
$$
where $u: \P \to \P \circ \P$ is the unit map. This object can be realized as a chain complex $(|\mathsf{Res}_{\P}(A)|, d +\partial)$ where the $d$ is the internal differential on $A$ and $\partial$ is the differential coming from the simplicial structure we have just defined. The cotriple resolution is not novel to this work, it was developed and studied by Fresse \cite{fresse16}.

\medskip 

\noindent{\bf{The cotriple spectral sequence. }} The cotriple resolution  $(|\mathsf{Res}_{\P}(A)|, d +\partial)$ is defined as the realization of a simplicial object and therefore admits a skeletal filtration. The \emph{cotriple spectral sequence} is the associated spectral sequence. A morphism of augmented $\P$-algebras naturally induces a morphism of the corresponding spectral sequences.
The $E^0$-page of this spectral sequence is explicitly given by
\[
E_0^{p,q} =  \P^{\circ p}(A)^{p+q}
\]
where the $p+q$ grading is the total grading. The differential $d^0$ is therefore the usual differential on $ \P^{\circ p}(A).$

Suppose now that the free algebra functor $\P$ reflects homotopy equivalences and therefore $H^*(\P(A)) = \mathcal B(H^*(A))$ for a functor $\mathcal B;$ it follows that the $E_1$-page of the spectral sequence is
$$
E_1^{p,q} = \left(\mathcal B^{\circ p}(H^*(A))\right)^{p+q}
$$
 and the differential on this page is therefore entirely determined by the depth 2 component of the codifferential.
For $\P = \E,$ $E^2_{p,q}$ is the free Steenrod algebra applied $p$ times  to the cohomology of $A.$ 

\medskip 

\noindent{\bf{Cotriple products. }} We shall refer to the higher differentials in this spectral sequence as \textit{cotriple products}. When $\P$ reflects homotopy equivalences, homotopy invariance is immediate; as any weak equivalence of $\P$-algebras induces an isomorphism of $E_1$-pages. Cotriple products are therefore well-defined as elements of $E_n^{p,q}$ in the spectral sequence.

\medskip
\noindent{\bf{Cotriple products in terms of Sullivan algebras. }}We shall give a second description which is more in line with the classical definition of Massey products in terms of defining systems. Theorem \ref{Thm : Differentials EMSS and cotriple products} gives the proof of the correspondence. 
\begin{definition}
\label{def:product_set}
     Let $A$ be a $\P$-algebra and fix a choice of $f: (\P (\bigoplus_{i=0}^N V_i), d) \xrightarrow{\sim} A$ an $N$-step Sullivan model for $A.$ Consider the ideal $I(\bigoplus_{i=1}^N V_i)$ in $(\P(\bigoplus_{i=0}^N V_i), d)$ generated by $\bigoplus_{i=1}^N V_i$. Let $\sigma \in I(\bigoplus_{i=1}^N V_i)$ be a cocycle.  A \emph{defining system} for $\sigma$ is a $\P$-algebra map
    $$
    g_N: (\P(\bigoplus_{i=0}^N V_i), d) \to A
    $$
    such that $H^*(g_N)|_{V_0} = H^*(f)|_{V_0}.$ The \emph{$\sigma$-cotriple product set} is given by the collection of $H^*(g_N)(\sigma)$ where $g_N$ ranges across all choices of defining systems.
\end{definition}
\begin{remark}
    Both definitions have advantages. The spectral sequence definition immediately shows that cotriple products are homotopy invariant. The Sullivan algebra definition shows that Massey products are examples of cotriple products.
\end{remark}
Rather like usual Massey products, one only needs to compute the $\sigma$-cotriple product set rather than the indeteminacy in the spectral sequence to use cotriple products.
\begin{proposition}
\label{productsetease}
	 Let $\P$ be an operad that reflects homotopy equivalences. A morphism  of $\mathcal P$-algebras $f:A\to B$ preserves cotriple product sets. 
	If furthermore $f$ is a quasi-isomorphism, 
	then $f_*$ induces a bijection between the corresponding cotriple product sets.
\end{proposition}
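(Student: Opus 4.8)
The plan is to prove both assertions by working over a \emph{common} Sullivan model and exploiting the two equivalent descriptions of cotriple products: as higher differentials in the cotriple spectral sequence and, via Theorem \ref{Thm : Differentials EMSS and cotriple products}, as the defining-system product sets of Definition \ref{def:product_set}. The first description makes functoriality transparent, while the second is the one in which the statement is phrased; I will pass between them freely.

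For the first assertion, fix a step-$N$ Sullivan model $m\colon (\P(V),d)\to A$ and a cocycle $\sigma$ in the ideal $I(\bigoplus_{i\geq 1}V_i)$. Given any defining system $g\colon(\P(V),d)\to A$ for $\sigma$, the composite $f\circ g\colon(\P(V),d)\to B$ is again a $\P$-algebra map, and $H^*(f\circ g)(\sigma)=f_*\bigl(H^*(g)(\sigma)\bigr)$. Because $f$ induces a morphism of the associated cotriple spectral sequences, which commutes with every higher differential, the class $f_*\sigma$ supports the corresponding cotriple product in $B$ and its value is $f_*$ of the value in $A$; translating back through Theorem \ref{Thm : Differentials EMSS and cotriple products}, $f_*(H^*(g)(\sigma))$ lies in $\mathrm{PS}_B(f_*\sigma)$. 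Running over all defining systems $g$ shows $f_*\bigl(\mathrm{PS}_A(\sigma)\bigr)\subseteq \mathrm{PS}_B(f_*\sigma)$; that is, $f$ preserves cotriple product sets.

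Now suppose $f$ is a quasi-isomorphism. By the forward direction of Proposition \ref{prop:sullivan_algebras_respect_weak_eq}, $f\circ m\colon(\P(V),d)\xrightarrow{\sim}B$ is itself a Sullivan model for $B$, so the product sets of $A$ and of $B$ are computed over the \emph{same} free algebra $(\P(V),d)$, and $f_*$ maps one into the other by the previous paragraph. For surjectivity I would lift defining systems of $B$ back to $A$. Since $f$ need not be surjective, first replace it by an acyclic fibration using the acyclic-complement construction of Proposition \ref{prop:sullivan_algebras_respect_weak_eq}: form $A\sqcup\P(W')$ with $W'$ acyclic, yielding an acyclic fibration $f'\colon A\sqcup\P(W')\to B$ and a quasi-isomorphism $\pi\colon A\sqcup\P(W')\to A$ with $f\circ\pi\simeq f'$. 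Given a defining system $g'\colon(\P(V),d)\to B$, cofibrancy of the Sullivan algebra (Theorem \ref{thm:llp}) lifts $g'$ through $f'$ to a map $\tilde g$, and I set $g=\pi\circ\tilde g$. This $g$ is the desired defining system for $A$: it satisfies the $V_0$-condition because $f_*$ is injective on $H^*(A)$, and $f_*\bigl(H^*(g)(\sigma)\bigr)=H^*(g')(\sigma)$ because $f\circ\pi\simeq f'$. Combined with injectivity of $f_*$ on $H^*(A)$, this yields the claimed bijection.

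The main obstacle is to ensure that a correspondence holding at the level of cohomology classes upgrades to one of product sets \emph{with their indeterminacies}: a single isomorphism $f_*\colon H^*(A)\to H^*(B)$ does not a priori match the two indeterminacy cosets. This is precisely why one must either work over a common Sullivan model (so that the same defining systems are in play on both sides) or, equivalently, invoke that $f$ induces an isomorphism of the \emph{entire} cotriple spectral sequence: a weak equivalence induces an isomorphism of $E_1$-pages, and since every higher differential is natural this propagates to an isomorphism of all later pages, hence a bijection of cotriple products together with their indeterminacies. The secondary technical point, turning the quasi-isomorphism $f$ into an acyclic fibration so that the lifting property applies, is handled by the adjunction of the acyclic free algebra $\P(W')$ and the verification that $f\circ\pi\simeq f'$.
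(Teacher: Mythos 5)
Your proof is correct and follows essentially the same route as the paper: the first assertion by postcomposing defining systems with $f$, and the second by replacing the quasi-isomorphism with an acyclic fibration via the acyclic-complement construction of Proposition \ref{prop:sullivan_algebras_respect_weak_eq} and then lifting defining systems using Theorem \ref{thm:llp}. The extra remarks about the spectral sequence description and the matching of indeterminacies are consistent with the paper's viewpoint but not needed beyond what the paper itself does.
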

\begin{proof}
    The first statement is straightforward as one can verify that given a $\sigma$-defining system
    $$
    g_N: (\P(\bigoplus_{i=0}^N V_i), d) \to A,
    $$
    postcomposing by $f$ gives a defining system $h_N$ on $B$
    such that $f^\ast H^*(g_N)|_{V_0} = H^*(h_N)|_{V_0}.$ Therefore the $\sigma$-cotriple product set on $B$ is a subset of that on $A.$

    \medskip 

    To prove the second statement, first observe that if the quasi-isomorphism $f: A \to B$ is surjective, one may lift any defining system on $g_N: (\P(\bigoplus_{i=0}^N V_i), d) \to B$ via the algorithm of Theorem \ref{thm:llp}. The general case follows from a similar argument to Proposition \ref{prop:sullivan_algebras_respect_weak_eq}, as in the proof of that result, one may replace the map $f$ with a zig-zag of quasi-isomorphisms $A\xleftarrow{g}C \xrightarrow{h} B$ where $B$ is surjective. One can then lift the defining system to $C$ and then push it forward to $A.$
\end{proof}
\begin{remark}
    This definition allows us to extend the notion of cotriple products to algebras over other operads, like the commutative operad, where $\P$ is not a  functor that reflects homotopy equivalences. In this case, we cannot necessarily deduce the homotopy invariance of such products automatically (and indeed, frequently they will not be, see Example \ref{counterexampletohomotopytype}). Therefore, each time we define such a product, we shall need to manually check homotopy invariance.
\end{remark}
The next theorem essentially states that this formulation of cotriple products is equivalent to the spectral sequence one.
\begin{theorem}
\label{Thm : Differentials EMSS and cotriple products}
Let $\P$ be an operad that reflects homotopy equivalences. Let $A$ be a $\P$-algebra and fix a choice of $f: (\P (\bigoplus_{i=0}^N V_i), d) \xrightarrow{\sim} A$ a $N$-step Sullivan model for $A.$  Let $\sigma \in I(\bigoplus_{i=1}^N V_i)$ be a cocycle.
Then there exists an element 
\[
G(\sigma) \in \P^{\circ N}(H^*(A))
\]
which survives to the $E_N$-term of the $\P$-cotriple spectral sequence,
and 
$$
d_{N-1}\left([G(\sigma])\right) \in (-1)^{N-2} \left[\idd\otimes H^*(f(\sigma))\right].
$$
\end{theorem}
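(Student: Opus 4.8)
The plan is to set up an explicit dictionary between the nilpotence tower of the Sullivan model and the skeletal filtration of the cotriple resolution, and then to extract the top differential from the Sullivan differential $d$. Since $\P$ reflects homotopy equivalences, both $(\P(\bigoplus_i V_i),d)$ and $|\mathsf{Res}_{\P}(A)|$ are free resolutions of $A$ whose $E_1$-pages have the common form $\mathcal B^{\circ\bullet}(H^*(A))$, so the computation of differentials may legitimately be transported from one model to the other. Concretely, I would apply the lifting property of Theorem \ref{thm:llp} to the augmentation $|\mathsf{Res}_{\P}(A)|\xrightarrow{\sim}A$, which is an acyclic fibration, and arrange the resulting comparison inductively along the tower so that $V_k$ is carried into skeletal filtration $k$. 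The nilpotence condition $d(V_k)\subseteq\P(\bigoplus_{i<k}V_i)$ is exactly what permits such a filtered lift: the differential of a new generator already lives one filtration step lower, matching the drop produced by the simplicial boundary $\partial$.

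With the dictionary in hand I would define $G(\sigma)$ by transporting a defining system for $\sigma$ into $\P^{\circ N}(H^*(A))$: the generators occurring in $\sigma$ are replaced by their images in the successive free layers, and the operadic product structure reassembles them into a single element of filtration $N$. Because $\sigma$ is a genuine cocycle of the Sullivan model, the class it determines is a cycle for the internal differential and for every lower spectral-sequence differential $d_1,\dots,d_{N-2}$; the vanishing of each of these is forced by the defining relation $dv=z$ of the corresponding layer, so that the only obstruction able to survive is the top one. This is the survival statement required for $d_{N-1}$ to act on $[G(\sigma)]$.

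The heart of the argument is the evaluation of $d_{N-1}[G(\sigma)]$. Using the zig-zag recipe for a higher differential, I would apply the total differential $d+\partial$ to the filtration-$N$ representative and follow it down through $N-1$ filtration steps. The interior contributions, which come from the operadic composition faces $\gamma_{\P}$, are cancelled by the corrections supplied by the defining system; the single term that survives into filtration $1$ is produced by the algebra-action face $\gamma_A$ and equals $\idd\otimes H^*(f(\sigma))$, the defining-system value. The sign $(-1)^{N-2}$ is accumulated from the alternating signs of the simplicial faces over the course of the descent. Running the comparison in both directions, as in Proposition \ref{prop:sullivan_algebras_respect_weak_eq}, then shows the class to be independent of the auxiliary choices up to indeterminacy, so that it coincides with the spectral-sequence cotriple product.

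I expect the main obstacle to be twofold. First, one must make the comparison genuinely filtration-preserving degree by degree, so that $\sigma\in I(\bigoplus_{i=1}^N V_i)$ lands in filtration exactly $N$; this is where the nilpotence condition has to be used most carefully and where an off-by-one in the indexing is easiest to introduce. Second, and more delicate, is ensuring that $G(\sigma)$ is the source of a genuinely nonzero $d_{N-1}$ rather than a permanent cycle: the element must be built from the partial lift determined by the defining system, whose failure to close up is precisely the cotriple product, and not from a global chain map $R\to|\mathsf{Res}_{\P}(A)|$, which would send the cocycle $\sigma$ to a permanent cycle and produce a vanishing differential. Keeping these two lifts apart, together with the sign bookkeeping through the alternating face maps, is the crux of the proof.
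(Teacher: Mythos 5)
Your proposal is correct and follows essentially the same route as the paper: the paper's proof is exactly the ``zig-zag recipe'' you describe, formalized via the Staircase Lemma of Kraines, with the filtered representative built by repeatedly substituting $dv$ for each generator $v$ of the higher layers (using the nilpotence condition) and pushing the resulting sequence into $\P^{\circ i}(A)$ via $f$ on generators. The only cosmetic difference is that the paper constructs this staircase explicitly by substitution rather than by invoking the lifting property of Theorem \ref{thm:llp} against the augmentation of the cotriple resolution, but the computational core --- descent through $N-1$ filtration steps with alternating signs, terminating at $f(\sigma)$ --- is identical.
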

To prove this we shall make use of the Staircase Lemma \cite[Lemma 2.1]{kraines72}, 
which we briefly recall next.

\begin{lemma} 
\label{lemma : Staircase lemma1}
Let $A=\left(A_{*,*}, d', d''\right)$ be a bicomplex,
denote by $d$ the differential on its total complex, 
and fix $c_1,\dots, c_n$ homogeneous elements in $A$. 
Suppose that $d'c_s = d''c_{s+1}$ for $1 \leq s\leq n-1$,
and define $c := c_1-c_2+\cdots +(-1)^{n-1}c_n.$ 
Then, $dc = d'c+d''c = d''c_1+ (-1)^{n-1}d'c_n,$ and furthermore,
in the spectral sequence $\left\{\left(E^r,d^r\right)\right\}$ associated to the bicomplex, 
if $d''c_1=0$ then $c_1$ survives to $E^n$, and $d^n[c_1]= (-1)^{n-1}[d'c_n].$
\end{lemma}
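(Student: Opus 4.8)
The plan is to prove the two assertions separately: the formula for $dc$ is a purely formal telescoping computation, while the spectral-sequence statement follows from it together with the standard ``zig-zag representative'' description of the differentials $d^r$ of a filtered complex. Throughout I would write $d = d' + d''$ on the total complex and filter by the grading that $d''$ preserves, so that $F^p$ consists of the elements supported in columns $\ge p$; with this convention $d''$ is the filtration-preserving part (inducing $d^0$, so $E^1 = H_{d''}$) and $d'$ raises the filtration by one (inducing the higher differentials). This matches the statement, in which $d''c_1 = 0$ is the condition for $c_1$ to define an $E^1$-class and $d'$ produces the target $[d'c_n]$.

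Before the computation I would pin down the bidegrees, since this is what controls everything downstream. The hypothesis $d'c_s = d''c_{s+1}$ forces the $c_s$ onto a staircase: if $c_1$ sits in bidegree $(p,q)$, then comparing the bidegree of $d'c_s$, namely $(p+s, q-s+1)$, with that of $d''c_{s+1}$ shows $c_s$ sits in bidegree $(p+s-1,\, q-s+1)$. In particular $c_{i+1}$ lies in column $p+i$, hence $c_{i+1} \in F^{p+i}$, and $d'c_n$ lies in column $p+n$, hence in $F^{p+n}$. I would record these placements explicitly.

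Next I would establish the identity by telescoping. Expanding $dc = \sum_{s=1}^n (-1)^{s-1}\bigl(d'c_s + d''c_s\bigr)$ and rewriting the $d'$-contributions for $s \le n-1$ via $d'c_s = d''c_{s+1}$, every interior term $d''c_t$ with $2 \le t \le n$ occurs twice, with the opposite signs $(-1)^t$ and $(-1)^{t-1}$, and cancels; only the boundary contributions $d''c_1$ and $(-1)^{n-1}d'c_n$ survive, giving $dc = d''c_1 + (-1)^{n-1}d'c_n$. This uses only linearity of $d', d''$ and the staircase relations, not even $d^2 = 0$.

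For the spectral-sequence claim I would invoke the representative description of the higher differentials: a filtration-degree-$p$ class survives to $E^r$ exactly when a representative $z_0 \in F^p$ admits a correction $z = z_0 + z_1 + \cdots + z_{r-1}$ with $z_i \in F^{p+i}$ and $dz \in F^{p+r}$, in which case $d^r[z_0] = [dz]$. Taking $z_0 = c_1$ (a $d''$-cocycle, hence a genuine $E^1$-representative) and $z_i = (-1)^i c_{i+1}$, the bidegree bookkeeping gives $z_i \in F^{p+i}$ and identifies the corrected element as exactly $z = c$. Since $d''c_1 = 0$, the identity yields $dc = (-1)^{n-1}d'c_n \in F^{p+n}$, so $c_1$ survives to $E^n$ and $d^n[c_1] = [dc] = (-1)^{n-1}[d'c_n]$, as claimed. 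The main obstacle is purely the bookkeeping: correctly matching the abstract representative description of $d^r$, with its filtration and sign conventions, to the concrete staircase, and verifying that each correction term $(-1)^i c_{i+1}$ genuinely raises the filtration by $i$. Once the bidegree placement of the $c_s$ is fixed, both parts are formal.
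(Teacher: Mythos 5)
Your proof is correct: the telescoping computation, the bidegree placement $c_s \in (p+s-1,\,q-s+1)$, and the zig-zag-representative description of the higher differentials (taking $z = c_1 - c_2 + \cdots + (-1)^{n-1}c_n$ with $dz = (-1)^{n-1}d'c_n \in F^{p+n}$ once $d''c_1 = 0$) all check out. The paper itself gives no proof of this lemma --- it is quoted from Kraines \cite[Lemma 2.1]{kraines72} --- and your argument is exactly the standard one from that source, so there is nothing further to compare.
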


Our approach to proving Theorem \ref{Thm : Differentials EMSS and cotriple products} is therefore to construct a sequence $c_1,\dots c_{r-1}$ 
satisfying the conditions of the Staircase Lemma.

\begin{proof}[Proof of Theorem \ref{Thm : Differentials EMSS and cotriple products}]
Our first step is to recursively define a sequence $x_0, \dots, x_N$ where $$x_i\in \P^{\circ i}\left(\bigoplus_{j=0}^{r(i)} V_j\right)$$
for some $r(i) \leq k.$ Firstly, let $x_0 = \sigma  \in \P(\bigoplus_{i=0}^k V_i) = \P^{\circ 1}(\bigoplus_{i=0}^k V_i)$. Then one obtains an element of $x_1 \in \P^{\circ 2}(\bigoplus_{i=0}^{k-1} V_i)$ by replacing every occurrence of an element in $v_n\in V_k$ by the formula for $dv_n \in \P(\bigoplus_{i=0}^{k-1} V_i)$ and identifying $\P\circ \P(\bigoplus_{i=0}^{k-1} V_i) = \P^{\circ 2}(\bigoplus_{i=0}^{k-1} V_i).$  Continuing this procedure, we obtain $x_i$ for $i = 0,\cdots k.$ There are maps
$$
g_k: \P^{\circ l}(\bigoplus_{i=0}^{k-l} V_i) \to \P^{\circ l}(A)
$$
defined on generators $v\in V_i$ by
$$
v \mapsto f(v).
$$
We define $c_i = g_{n-i-1}(x_{n-i-1})$. The element $G(\sigma)$ in the proof statement is $d(c_0).$
    
To finish, we must verify that the conditions of the Staircase Lemma \ref{lemma : Staircase lemma1} are met.
Denote by $\partial$  the external differential on $\mathcal P(A)$, and by $d^*$ its internal differential.
Then, since $dv = 0$ for each $v\in V_0$, it follows that $d^* c_1 = 0$.
A routine calculation  shows that $d^* c_{s+1} = \partial c_s$ for each $s$.
It follows from the Staircase Lemma that 
\[
d_{n-1}[c_1] = (-1)^{n}[\partial c_{n-1}] = (-1)^{n}[f(\sigma)].
\]
This concludes the proof.
\end{proof}
\section{Cotriple products for strictly commutative dg-algebras}
In this section, we shall apply the theory of cotriple products developed in the last section to the case of strictly commutative algebras.
\subsection{Secondary cotriple products}
\label{froben123}
\label{compute}
Fundamentally, the main difference between the rational and $p$-adic commutative dg-algebras is that the $\Sym$ functor does not behave well homotopically in positive characteristic. 
\begin{example}
\label{ex:symbadlybehaved}
    The functor $\Sym: \mathsf{dg-}R\mathsf{-mod} \to \mathsf{dg-}R\mathsf{-Com-alg}$ does not reflect homotopy equivalence. For example, when $R= \mathbb F_p,$ the dg-modules $V=0$ and $W=  [\mathbb F_p x\to \mathbb F_p dx]$ are homotopy equivalent. However $H^*(\Sym(V)) = 0$, while $H^*(W)$ is non-zero as $x^p$ represents a nontrivial cohomology class.
\end{example}
The reader should be warned that the theory of commutative dg-algebras in positive characteristic has more subtleties than the rational case. For example,  commutative dg-algebras over $\mathbb F_p$ possess \emph{higher commutative  operations}, that is cotriple products that do not arise in the same way as classical higher Massey products of \cite{massey58} and \cite{flynn23}. We shall use these to produce some examples; notably examples of dg-algebras over $\mathbb Z$ with torsion-free cohomology that are formal over $\mathbb Q$ but not over $\mathbb F_p$. The first examples of these is the following.
\begin{definition}
\label{def:higher_commutative}
    Let $A$ be a commutative dg-algebra over $\mathbb F_p$. Let $x,y\in H^*(A)$ be homogeneous elements such that $xy = 0$. Choose cocycles $a,b\in A$ representing $x,y$ respectively. Then there exists $c\in A$ such that $dc = a b .$ Then $c^p$ is a cocycle which we call the \emph{type 1 secondary Frobenius product} of $x$ and $y$. Like classical Massey products, this operation has indeterminancy. Indeed, if we choose another lift $dc' = a b$ it must differ from the element $c$ by a cocycle $\sigma \in Z^{|x|+|y|-1}$. So $c'^p = c^p + \sigma^p.$ So, for $p=2,$ $c^p$ represents a well defined element of 
$$
\frac{H^{p(|x|+|y|-1)}(A)}{H^{(|x|+|y|-1)}(A)^p+x^pH^{p(|y|-1)}(A)+y^pH^{p(|x|-1)}(A)}
$$
where the term $x^pH^{p(|y|-1)}(A)+y^pH^{p(|x|-1)}(A)$ in the denominator accounts for the choice of representatives $x$ and $y.$  If the prime $p$ odd, assume without loss of generality, that $|x|$ is even and $|y|$ is odd. Then $c^p$ is a well-defined class of
$$
\frac{H^{p(|x|+|y|-1)}(A)}{H^{(|x|+|y|-1)}(A)^p+y^pH^{p(|x|-1)}(A)}.
$$
\end{definition}
\begin{remark}
    The striking difference between Type 1 Frobenius operations and ordinary Massey products in characteristic 0 is the dependence of the indeterminacy on the initial choice of cocycles. For a concrete example of this in practice, see Example \ref{counterexampletohomotopytype}. This is an added  complication with the development of cotriple products in positive characteristic. The underlying reason for this problem is very simple, as an $E_\infty$-algebra over $\mathbb F_2$, $c\otimes c$ is not always a cocycle, but if we work with the cup-1 algebras defined in Subsection \ref{camposexample}
    $$
    c\otimes c + c\cup_1 (a\otimes b) + a^{\otimes 2}\otimes K + L \otimes b^{\otimes 2}
    $$
    is, where $a,b,c$ are as above, $dL = a\cup_1 a$ and $dK = b \cup_1 b.$ The extra term comes from the fact that, in a cup-1-algebra, one can add cocycles to $K$ and $L.$
\end{remark}
For odd primes, there is a second type of secondary cohomology operation on commutative dg-algebras.
\begin{definition}
Let $p$ be an odd prime and $A$ be a commutative dg-algebra over $\mathbb F_p$. Then there is a \emph{type 2 secondary Frobenius product} defined for homogeneous elements $x,y\in H^*(A)$ such that $xy = 0$. Choose cocycles $a,b\in A$ representing $x,y$ respectively. Then there exists $c\in A$ such that $dc = x y.$ Then, it follows from the graded commutativity of multiplication that $c^{p-1}ab$ is a cocycle which we call the \emph{type 2 secondary Frobenius product} of $x$ and $y$. In this case, the operation represents a well-defined element of 
$$
\frac{H^{(p-1)(|x|+|y|-1)+|x|+|y|}(A)}{H^{(|x|+|y|-1)}(A)^{p-1}xy}
$$
\end{definition}
Such operations do not exist over $\mathbb F_2$ as there is no reason for $c^{p-1}ab$ to be a cocycle. This is because the relation $x^2 = 0$ for $|x|$ odd does not hold for commutative dg-algebras in characteristic 2. 
\begin{remark}
\label{obstructiontoDP}
    Observe that $d(\frac{1}{p}c^p) = c^{p-1}ab.$ Therefore type 2 secondary Frobenius products vanish on divided power algebras. Therefore this kind of operation provides an obstruction for a commutative dg-algebra $A$ to be weakly equivalent to a divided power algebra. It is obvious that non-zero type 1 Frobenius operations are also obstructions as $(-)^p$ vanishes on divided power algebras.
\end{remark}
The next lemma is a short verification that our operation is well defined.
\begin{lemma}
\label{lem:liftindependent}
    Up to indeterminacy, the secondary Type 1 and Type 2 Frobenius products of $x, y \in H^*(A)$ do not depend on the choice of cocycles representing $x$ or $y$. 
\end{lemma}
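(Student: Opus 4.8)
The plan is to use the fact that two cocycles representing the same class differ by a coboundary, and to track how the auxiliary lift $c$ (and hence $c^p$ or $c^{p-1}ab$) changes when we pass from one choice of representatives to another. Write the competing choices as $a' = a + d\alpha$ and $b' = b + d\beta$, so that $|\alpha| = |x|-1$ and $|\beta| = |y|-1$. Using $da = db = 0$ and the Leibniz rule, I would first check that
$$
a'b' - ab = d\Big( \alpha b + (-1)^{|a|} a\beta + \alpha\, d\beta \Big),
$$
so that if $dc = ab$ then $c' := c + \eta$, with $\eta := \alpha b + (-1)^{|a|} a\beta + \alpha\, d\beta$, is a legitimate lift for the new representatives, i.e.\ $dc' = a'b'$. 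The whole problem then reduces to controlling the discrepancy between the operations built from $c$ and from $c'=c+\eta$.

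For the Type 1 product the engine is the freshman's dream: since $A$ is graded commutative over $\mathbb{F}_p$ and $c,\eta$ commute, one has $c'^{p} = c^p + \eta^p$ as an \emph{identity} (no cocycle hypothesis is needed, since the intermediate multinomial coefficients are divisible by $p$). Applying this to the three summands of $\eta$ gives
$$
\eta^p = (\alpha b)^p + (a\beta)^p + (\alpha\, d\beta)^p = \alpha^p b^p + a^p \beta^p + \alpha^p (d\beta)^p .
$$
Now $(d\beta)^p = d\big(\beta (d\beta)^{p-1}\big)$ and $\alpha^p$ is a cocycle, so the last term is a coboundary and vanishes in cohomology; the first two represent $[\alpha^p]\,y^p \in y^p H^{p(|x|-1)}(A)$ and $x^p\,[\beta^p] \in x^p H^{p(|y|-1)}(A)$, which are exactly the subspaces quotiented out in the indeterminacy. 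In odd characteristic the parity conventions ($|x|$ even, $|y|$ odd) make one of these two contributions vanish automatically, matching the asymmetry in the stated denominator.

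For the Type 2 product I would use the same substitution $c \mapsto c+\eta$, but now compare $c'^{p-1} a'b' = c'^{p-1}\, dc'$ with $c^{p-1}\, dc$. The key is the polynomial identity, valid in any commutative dg-algebra over $\mathbb{F}_p$,
$$
(u+v)^{p-1} d(u+v) - u^{p-1}\, du = v^{p-1}\, dv + d\Theta(u,v),
$$
where $\Theta(u,v)$ is the graded analogue of the integral polynomial $\tfrac1p\big((u+v)^p - u^p - v^p\big)$. Taking $u=c$, $v=\eta$ shows the difference of the two Type 2 products equals $\eta^{p-1} d\eta$ modulo coboundaries, and expanding $\eta$ and $d\eta$ term by term exhibits this as an element of the indeterminacy.

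The hard part will be twofold: (i) establishing the integral identity for $\Theta$ rigorously in the signed, graded setting, including the verification that all denominators $p$ genuinely cancel; and (ii) confirming that each term produced lands in the indeterminacy subspace rather than merely in the ambient cohomology group. It is precisely here that the careful even/odd degree bookkeeping and the vanishing of odd-degree squares in characteristic $\neq 2$ enter, and this is what forces the indeterminacy to take the asymmetric form recorded in the two definitions.
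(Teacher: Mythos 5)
Your argument is correct and follows essentially the same route as the paper: the corrected lift $c' = c+\eta$ with $\eta = \alpha b' + a\beta$ (the paper's $c' = c + aL + b'K$ with $K=\alpha$, $L=\beta$), the freshman's dream identifying $c'^p - c^p$ with terms in $x^pH^{p(|y|-1)}(A)+y^pH^{p(|x|-1)}(A)$ for Type 1, and the integral polynomial $\tfrac1p\bigl((u+v)^p-u^p-v^p\bigr)$ controlling the Type 2 difference are exactly the paper's devices. The only cosmetic differences are that you keep $\eta$ as three summands (correctly discarding $(\alpha\,d\beta)^p$ as a coboundary) where the paper absorbs it into two, and that you explicitly flag the residual $\eta^{p-1}d\eta$ term in the Type 2 case, which the paper's displayed expansion handles only implicitly.
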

\begin{proof}
  Let $a'$ and $b'$ respectively be an alternative choice of cocycles representing $x, y$. Then $a' - a =dK$ and $b' - b = dL$ are coboundaries.
    Let 
$
c' = c + aL + b'K 
$.
    Then we have $dc' = a'b'$. Moreover, we have $(c')^p = c^p + a^p L^p + (b')^p K^p.$ Observe  that $K^p$ and $L^p$ are cocycles and therefore represent elements of $H^{p(|x|-1)}(A)$ and $H^{p(|y|-1)}(A)$ respectively. We therefore have that $c^p$ and $(c')^p$ represent the same element of 
$$
\frac{H^{p(|x|+|y|-1)}(A)}{H^{(|x|+|y|-1)}(A)^p+x^pH^{p(|y|-1)}(A)+y^pH^{p(|x|-1)}(A)}.
$$
If $p$ is odd, it follows that $|K|$ is odd, and we have that $c^p$ and $(c')^p$ represent the same element of 
$$
\frac{H^{p(|x|+|y|-1)}(A)}{H^{(|x|+|y|-1)}(A)^p+y^pH^{p(|x|-1)}(A)}.
$$
   Now we consider the case of Type 2 operations.  Let $a',b', c', L, K$ all be defined as before. Then we have
   $$
   (c')^{p-1}a'b' = (c +aM+ MdL+bL)^{p-1}(dL+a)(dM+b)
   $$
   This can be written as 
   $$
   (c)^{p-1}ab + d\left(\sum_{\substack{i+j = p \\i,j\neq 0}} \frac{1}{i} \binom{p-1}{i} c^i\left(aM+ MdL+bL\right)^j \right)
   $$
   We have therefore that $(c')^{p-1}a'b'$ and $(c)^{p-1}ab$ represent the same element of
   $$
\frac{H^{p^n(|x|+|y|-1)+|x|+|y|}(A)}{H^{(|x|+|y|-1)}(A)^{p^n-1}}.
$$
This proves the lemma.
\end{proof}
The following lemma is straightforward consequence of the previous one.
\begin{lemma}
\label{lem:homotopyinvariance}
   Secondary Frobenius products are homotopy invariant. That is, 
	if $x, y \in H^*(A_1)$ are homogeneous elements such that
 their secondary Frobenius product $z$ is defined then for any zig-zag of quasi-isomorphisms
 $$
 \begin{tikzcd}
     A_1 \arrow[r, "f_1"] & A_2 &\arrow[l, "f_2", swap]\cdots \arrow[r, "f_{n-1}"] & A_n
 \end{tikzcd}
 $$
 the Massey product of $f_1^*(f_2^*)^{-1}\cdots f_{n-1}^*(x)$ and $f_1^*(f_2^*)^{-1}\cdots f_{n-1}^*(y)$ is, up to indeterminacy, equal to $f_1^*(f_2^*)^{-1}\cdots f_{n-1}^*(z)$ in $H^*(A_n)$.
\end{lemma}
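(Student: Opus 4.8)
The plan is to deduce the statement from the lift independence established in Lemma \ref{lem:liftindependent}, reducing everything to the behaviour of the two product formulas and their indeterminacies under a single quasi-isomorphism. Since any zig-zag decomposes into quasi-isomorphisms pointing in either direction, and since $f_1^*(f_2^*)^{-1}\cdots f_{n-1}^*$ is a composite of the induced cohomology isomorphisms and their inverses, it suffices to prove that a single quasi-isomorphism of commutative dg-algebras $f\colon A\xrightarrow{\sim} B$ carries the $(x,y)$-product set of $A$ bijectively onto the $(f^*x,f^*y)$-product set of $B$. Applying this fact and its inverse along each arrow of the zig-zag then yields the claim.

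First I would treat the forward map. Fixing cocycle representatives $a,b\in A$ of $x,y$ and a lift $c\in A$ with $dc=ab$, I use that $f$ is simultaneously a chain map and an algebra map: the elements $f(a),f(b)$ are cocycles representing $f^*x,f^*y$, and $df(c)=f(ab)=f(a)f(b)$, so $f(c)$ is a legitimate lift for computing either Frobenius product in $B$. Because $f$ is a ring homomorphism it commutes with the defining operations, giving $f(c^p)=f(c)^p$ in the Type~1 case and $f(c^{p-1}ab)=f(c)^{p-1}f(a)f(b)$ in the Type~2 case. Hence $f^*(z)$ is one of the admissible representatives of the product of $f^*x,f^*y$; by Lemma \ref{lem:liftindependent} any other choice of lift in $B$ differs from it only by an element of the indeterminacy, so $f^*(z)$ genuinely lands in the target product set.

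It then remains to check that $f^*$ identifies the two indeterminacy subspaces. This is where the ring-isomorphism property does the work: each generator of the denominator in Definition \ref{def:higher_commutative} (or its Type~2 analogue) is built from multiplication and $p$-th powers, and a graded ring isomorphism satisfies $f^*(u^p)=(f^*u)^p$ and $f^*(uv)=f^*(u)f^*(v)$. Thus $f^*$ sends $H^*(A)^p$ onto $H^*(B)^p$, sends $x^pH^*(A)$ onto $(f^*x)^pH^*(B)$, and similarly for the remaining terms, so it maps the full indeterminacy of $A$ isomorphically onto that of $B$. Combined with the previous paragraph, $f^*$ sends the product coset $z+\mathrm{Indet}_A$ onto $f^*(z)+\mathrm{Indet}_B$, i.e. bijectively onto the target product set.

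The only point requiring care — and the step I would flag as the main obstacle — is the matching of indeterminacies rather than the representatives: one must verify that $f^*$ is surjective onto the target indeterminacy and not merely maps into it. This is immediate here because $f^*$ is an isomorphism, but it becomes the crux when passing through the inverse $(f^*)^{-1}$ of a backward quasi-isomorphism in the zig-zag. Since $(f^*)^{-1}$ is again a graded ring isomorphism respecting $p$-th powers, the identical argument applies verbatim, and composing the resulting bijections along the zig-zag gives that the product of the transported classes equals the transported class $f_1^*(f_2^*)^{-1}\cdots f_{n-1}^*(z)$ up to indeterminacy, as required.
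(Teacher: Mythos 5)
Your overall strategy is the right one and is what the paper intends (the paper itself gives no argument beyond citing Lemma \ref{lem:liftindependent}): push a defining system $(a,b,c)$ forward along each chain-level algebra map, observe that $f(c)^p$ (resp.\ $f(c)^{p-1}f(a)f(b)$) is then a legitimate representative of the product of $f^*x$ and $f^*y$, and use that $f^*$ is a graded ring isomorphism to identify the two indeterminacy subgroups of Definition \ref{def:higher_commutative}. That already gives equality of the classes in the respective quotients, which is all the lemma asserts; backward arrows are handled by running this argument on the backward-pointing map and inverting the resulting identity of cohomology classes.

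However, your stated reduction --- that a quasi-isomorphism carries the $(x,y)$-product \emph{set} of $A$ bijectively onto the $(f^*x,f^*y)$-product set of $B$ --- is false for commutative dg-algebras, and the paper itself supplies a counterexample: in Example \ref{counterexampletohomotopytype} the secondary product set is $\{0\}$ in $A$ but $\{0,z\}$ in $C$ along a quasi-isomorphism $A\to C$. The slip is in your third paragraph, where you identify the coset $f^*(z)+\mathrm{Indet}_B$ with the target product set; the product set is in general a proper subset of that coset, because $\Sym$ does not reflect homotopy equivalences, so Proposition \ref{productsetease} does not apply (this failure to fill out the indeterminacy is precisely what later breaks homotopy invariance of the \emph{higher} Frobenius products). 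What your argument actually establishes --- and all the lemma needs --- is that $f^*$ maps the product set of $A$ \emph{into} the product set of $B$ and maps $\mathrm{Indet}_A$ onto $\mathrm{Indet}_B$, so the two classes agree modulo indeterminacy. Relatedly, ``the identical argument applies verbatim'' to $(f^*)^{-1}$ is not literally correct: $(f^*)^{-1}$ is not induced by a chain-level algebra map, so defining systems cannot be transported along it; one must instead apply the forward argument to the backward quasi-isomorphism and invert only at the level of classes. With these two points repaired, your proof is complete.
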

The most immediate application of these cohomology operations is that they provide extra obstructions when comparing commutative dg-algebras. Our first example is an algebra with no torsion in its cohomology that is formal over $\mathbb Q$ but not $\mathbb F_2.$
\begin{example}
\label{ex:algebrasweaklyhomotopicoverQbutnotF2}
    Consider the case $p=2$ and consider the following dg algebras over $\mathbb Z$. $$A= \operatorname{Sym}(x, y, z) / (xy, xz, yz) \qquad B =  \operatorname{Sym}(x, y, z, t) / (t^p - z, tz, xz, yz, t^{p+1}, t^{p-1}x, t^{p-1}y)$$
where $x,y,z$ are cocycles, we have $dt = xy$ and $| x| = |t| = 2,$ $| y| = 1$ and $|z| = 2p.$ 

A set of generators for $A$ as a free $\mathbb Z$-module is given by $\{x, x^2, x^3, \cdots, y, z, z^2, z^3, \cdots\}$ and its cohomology ring is $A$ itself. For $B$, one has a set of generators given by
$$\{ z^i \mbox{for } i \geq 1\}\cup\{ t^ix^jy^k, \mbox{for }  p-2 \geq i\geq 1, j  \geq 0 \mbox{ and } k \in \{0,1\}\}
$$
Therefore, by direct computation, one has that the cohomology of $B$ is equal to $A$. However, by Lemma \ref{lem:homotopyinvariance}, they are not quasi-isomorphic as commutative dg-algebras as all the secondary Frobenius products in $A$ vanish, while in $B$ the secondary Frobenius product of $x$ and $y$ is $\{z\}.$ However, these algebras are quasi-isomorphic over $\mathbb Q$ via the zig-zag
$$
A \xleftarrow{f} \operatorname{Sym}(x, y, z, t) / (xz, yz) \xrightarrow{g} B
$$
where $dt = xy$ and $f,g$ are the obvious projection maps. So $B$ is formal over $\mathbb Q$ but not $\mathbb F_2$.
\end{example}
\begin{remark}
    Formal algebras have vanishing Massey products. The previous example therefore demonstrates that Frobenius products are a different set of invariants to classical Massey products.
\end{remark}
  The problem with extending Example \ref{ex:algebrasweaklyhomotopicoverQbutnotF2} to $\mathbb F_p$ directly is that the element $ty\in B$ becomes the Massey product $<x,y,y>$ as $y^2 = 0$. We therefore alter it slightly to produce an example of two algebras with the same cohomology that are quasi-isomorphic over $\mathbb Q$ but not $\mathbb F_p$

 \begin{example}
\label{ex:algebrasweaklyhomotopicoverQbutnotFp}
    Consider the following dg algebra over $\mathbb Z$. \begin{align*}
    A &= \operatorname{Sym}(x, y, z, t) 
    \Big/ \big( x, y, z^2, xz, yz, tz, t^p, t^{p-1}x, t^{p-1}y \big).
\end{align*}
    where $dt = xy$,  $| x| = |t| = 2,$ $| y| = 1$ and $|z| = 2p.$. A  basis for $A$ is given by 
    \begin{align*}
    S &= \{x, x^2, x^3, \dots, y, z\} 
    \cup \{ t^i x^j y^k \mid p > i \geq 0, j > 0, k \in \{0,1\} \}.
\end{align*}
    With coefficients in either $\mathbb Q$ or $\mathbb F_p$, the cohomology of $A$ is therefore
\begin{align*}
    \operatorname{Sym}(x,y,z) \Big/ \big( xy, yz, xz \big) 
    \cup \big\{ s_1, s_2, \dots, s_{p-1} \big\}.
\end{align*}
    where the added elements are Massey products $s_i = t^iy = <x, y, s_{i-1}>$ in $A$. 

    Our second algebra is
\begin{align*}
    B &= \operatorname{Sym}(x, y, z, t) 
    \Big/ \big( t^p - z, tz, xz, yz, t^{p+1}, t^{p-1}x, t^{p-1}y \big).
\end{align*}
where $x,y,z$ are cocycles, we have $dt = xy$ and $ds = ty$ and $|s| = | x| = |t| = 2,$ $| y| = 1$ and $|z| = 2p.$

For $B$, one has a basis given by
\begin{align*}
    S &= \{ z^i \mid i \geq 1 \} 
    \cup \{ t^i x^j y^k \mid p-2 \geq i \geq 1, \, j, k \geq 0 \}.
\end{align*}
Therefore, by direct computation, one has that the cohomology of $B$ is equal to $A$. However, by Lemma \ref{lem:homotopyinvariance}, they are not quasi-isomorphic as commutative dg-algebras as all the secondary Frobenius products in $A$ vanish, while in $B$ the secondary Frobenius product of $x$ and $y$ is $\{z\}.$ However, these algebras are quasi-isomorphic over $\mathbb Q$ via the zig-zag
\begin{align*}
    A \xleftarrow{f} \operatorname{Sym}(x, y, z, t) \Big/ \big( xz, yz \big) 
    \xrightarrow{g} B.
\end{align*}
where $dt = xy$ and $f,g$ are the obvious projection maps.
\end{example}

Lastly, we give a counterexample constructed using a Type 2 Frobenius product.

\begin{example}
\label{ex:algebranoteqtodividedpower}
    Here we give an example of an algebra that has a divided power structure on its cohomology is nonetheless not quasi-isomorphic to a divided power algebra. For this we use type 2 Frobenius products.  Consider the following dg-algebras
    \begin{align*}
    A &= \operatorname{Sym}(x, y, z, t) \Big/ \big( xz, yz, zs_i, s_{p-1}x, s_{p-1}y, s_i s_j - x s_{i+j}, s_p x, s_p y \big) \\
    B &= \operatorname{Sym} \big( \mathbb{F}_p \langle x, y, z  \rangle, t \big) 
    \Big/ \big( t^p, t^{p-1} y - z, t x^2 \big).
\end{align*}
    where $dt=xy$ and the degrees $|x|,|t|$ are even and $|y|, |z|$ are odd. The cohomology of this is given by $(\mathbb F_p\langle x, y, z, c_1,c_2, \dots c_{p-2} \rangle/(xy, c_ic_j, xc_i,yc_i, zc_i))$, where the $c_i = t^iy$, which is a divided powers algebra.  Nonetheless, the  type 2 Frobenius product  of $x,y$ is $xz$ so by Remark \ref{obstructiontoDP} it cannot be quasi-isomorphic to a commutative algebra that can be equipped with the structure of a divided powers algebra.
 \end{example}

We conclude this section with a brief completeness result.
\begin{definition}
    We call a cotriple product \emph{primitive} if it arises from monomial relations in cohomology.
\end{definition}
\begin{proposition}
    All secondary primitive cotriple products on a commutative dg-algebra $A$ over $\mathbb F_p$ are linear combinations of
    \begin{itemize}
        \item classical Massey products.
        \item Type 1 secondary Frobenius operations
        \item Type 2 secondary Frobenius operations.
        
    \end{itemize}
\end{proposition}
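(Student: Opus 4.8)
The plan is to convert the statement into a cohomology computation via the defining-system description of cotriple products (Definition~\ref{def:product_set} together with Theorem~\ref{Thm : Differentials EMSS and cotriple products}) and then to classify the relevant cocycles directly, exploiting the fact that the relations are monomial. First I would fix a step-$1$ Sullivan model $f:(\Sym(V_0\oplus V_1),d)\xrightarrow{\sim}A$ with $V_0=H^*(A)$ (hence closed, so that $\Sym(V_0)$ has zero differential) and with $V_1$ spanned by generators $u_\alpha$ satisfying $du_\alpha=m_\alpha$, where by the hypothesis of primitivity each $m_\alpha$ is a monomial in the generators of $V_0$ representing a relation $m_\alpha=0$ in $H^*(A)$. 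By Theorem~\ref{Thm : Differentials EMSS and cotriple products}, a secondary cotriple product is obtained by applying a defining system $g$ to a cocycle $\sigma$ in the ideal $I(V_1)$; all three candidate operations arise this way without using $V_2$. It therefore suffices to prove that, modulo coboundaries and the reparametrisations already accounted for in the indeterminacy, the space of cocycles in $I(V_1)$ is spanned by the three asserted families.

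The three model cocycles are visible immediately. A linear syzygy $\sigma=\sum_\alpha w_\alpha u_\alpha$ with $w_\alpha\in\Sym(V_0)$ and $\sum_\alpha w_\alpha m_\alpha=0$ is a cocycle whose image under a defining system is a classical (possibly higher or degenerate) Massey product. For a single relation, $\sigma=u_\alpha^{p}$ is a cocycle because $d(u_\alpha^{p})=p\,u_\alpha^{p-1}m_\alpha=0$, and $H^*(g)(\sigma)=c^p$ is the Type~1 secondary Frobenius product. Finally, when $p$ is odd and $m_\alpha$ has odd degree, $\sigma=u_\alpha^{p-1}m_\alpha$ is a cocycle since $d(u_\alpha^{p-1}m_\alpha)=(p-1)u_\alpha^{p-2}m_\alpha^{2}=0$ by graded commutativity, and it yields the Type~2 secondary Frobenius product $c^{p-1}ab$.

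The substance of the argument is the converse: that these exhaust the secondary cocycles. Since $d$ is a derivation carrying each $u_\alpha$ to a monomial in the closed generators $V_0$, I would filter $\Sym(V_0\oplus V_1)$ by $V_1$-word length and compute the cohomology one relation at a time. For a single relation $du=m$, a direct computation of the cohomology of the sub-dg-algebra generated by $u$ and the variables occurring in $m$ shows that, in positive $u$-degree, the cohomology is generated up to $\Sym(V_0)$-multiples exactly by the Frobenius class $[u^{p}]$ (Type~1) and, when $m$ is odd, by $[u^{p-1}m]$ (Type~2), the remaining survivors being decomposables or strictly higher-order Frobenius classes such as $[u^{p^{k}}]$. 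The part of a cocycle linear in the $u_\alpha$ is governed by the syzygy module $\{\sum_\alpha w_\alpha u_\alpha : \sum_\alpha w_\alpha m_\alpha=0\}$; because the $m_\alpha$ are monomials, every syzygy is generated by the Koszul syzygies between pairs of relations and by the degenerate syzygies forced by a repeated odd generator, and both classes of syzygy produce classical Massey products.

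The main obstacle is precisely this cohomology computation when the monomials $m_\alpha$ overlap, i.e.\ share variables, so that the complex does not split as a clean tensor product and the $m_\alpha$ need not form a regular sequence. Here I would run an associated-graded argument adapted to the shared variables and verify that the cross-terms contribute only first Koszul-homology classes, that is, Massey products, and no new Frobenius-type cocycles; the Frobenius classes stay confined to the single-relation blocks because the vanishing of $d(u_\alpha^{p})$ and of $d(u_\alpha^{p-1}m_\alpha)$ rests only on the characteristic and on parity, and is insensitive to overlaps. Assembling the single-relation blocks (Types~1 and~2) with the syzygy contributions (Massey products) then shows that the span of the secondary cocycles is exactly the sum of the three families, so every secondary primitive cotriple product is a linear combination of classical Massey products, Type~1 secondary Frobenius operations, and Type~2 secondary Frobenius operations, as claimed.
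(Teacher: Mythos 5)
Your proposal is correct and follows essentially the same route as the paper: both identify secondary primitive cotriple products with cohomology classes in the ideal generated by $V_1$ inside the Sullivan model $\Sym(V_0\oplus V_1)$ and then classify the cocycles there as spanned by the powers $u^{p}$ (Type 1), the elements $u^{p-1}\,du$ (Type 2), and the linear syzygies giving Massey products. The paper simply asserts this classification as a direct verification, whereas you usefully sketch how the verification would proceed (filtration by $V_1$-word length, single-relation blocks, the syzygy module for the linear part), but the underlying argument is the same.
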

\begin{proof}
    Let $\Sym(V_0\oplus V_1)\to A$ be a step 2 Sullivan resolution. We recall that secondary cohomology operations are precisely given by terms representing cohomology in $I(\Sym(V_1)).$ These can be directly verified to be linear combinations of elements of the following form
    $$
    x^{p^n}, \qquad x^{p^n-1}dx, \qquad uc-av 
    $$
    where $x, u,v \in V_1$ and $du = ab$ and $dv = bc$ for $a,b,c \in \Sym(V_0)$. These corresponds to type 1 and type 2 Frobenius products and classical Massey products respectively. 
\end{proof}
\subsection{The relationship between associative and commutative algebras}
\label{camposexample}
    In \cite{campos23}, the authors raise the question of whether two commutative dg-algebras are quasi-isomorphic as associative dg algebras if and only if they are also quasi-isomorphic as commutative dg algebras. They prove the following theorem.
    \begin{theorem}\cite[Theorem A]{campos23}
        Let $A$ and $B$ be two commutative dg algebras over a field of characteristic zero. Then, $A$ and $B$ are quasi-isomorphic as associative dg algebras if and only if they are also quasi-isomorphic as commutative dg algebras. 
    \end{theorem} 
        We give a counterexample this in positive characteristic using Frobenius product obstruction theory. Similar examples should hold at all primes.  More precisely, our statement is the following.
        \begin{theorem}
        \label{mycounterexample}
            There exists two commutative dg algebras $A$ and $B$ over a field of characteristic two which may be distingushed via their type 1 Frobenius operation. Nonetheless, there exists an associative algebra $C$ such that there is a zig-zag of associative weak equivalences
            $$
            A \xleftarrow{\sim} C \xrightarrow{\sim} B
            $$
        \end{theorem}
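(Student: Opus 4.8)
The plan is to produce an explicit pair $A,B$ together with the associative span, so the argument splits cleanly into a commutative half and an associative half. For the commutative half I would take $A$ and $B$ to be a pair of the shape built in Example \ref{ex:algebrasweaklyhomotopicoverQbutnotF2}, arranged so that they share a single cohomology ring $H$ over $\mathbb{F}_2$ while their type $1$ Frobenius products disagree: writing $dt=xy$ with $xy$ a coboundary, the Frobenius product of $x$ and $y$ is the zero class in $A$ (one may lift with $c=0$, since $xy=0$ already holds there) but is the nonzero class $z=[t^2]$ in $B$. By Lemma \ref{lem:homotopyinvariance} the type $1$ Frobenius product is an invariant of the commutative quasi-isomorphism type, so $A$ and $B$ cannot be quasi-isomorphic as commutative dg-algebras. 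This settles the first assertion.

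The real content is the associative half, and its whole point is that the cocycle $t^2$ realising the Frobenius class is an artefact of strict commutativity. Indeed, in a merely associative algebra one has $d(t^2)=(dt)t+t(dt)=xyt+txy$, which does not vanish, so associatively $t^2$ is not even a cocycle and carries no cohomology class. I would therefore realise the common model as a \emph{cup-$1$ algebra} $C$ --- in particular an honest associative dg-algebra --- in which the strict identity $t^2=z$ of $B$ is loosened to a cup-$1$ coherent homotopy, with $z$ retained as an \emph{independent} cocycle generator rather than as a square. Decoupling $z$ from $t$ in this way is exactly what allows the two associative quasi-isomorphism types to coincide even though the commutative ones do not.

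Concretely, following the computation recorded in the remark after Definition \ref{def:higher_commutative}, the corrected element
$$
\Phi = t^2 + t\cup_1(xy) + x^2K + y^2L
$$
becomes a genuine cocycle once one adjoins generators $K,L$ with $dK=y\cup_1 y$ and $dL=x\cup_1 x$, and the freedom to translate $K$ and $L$ by cocycles enlarges the indeterminacy of $\Phi$ well beyond its strictly commutative counterpart. With $C$ assembled from $x,y,z,t,K,L$ and the cup-$1$ words they generate, I would write down two associative maps $p_A\colon C\to A$ and $p_B\colon C\to B$; both targets are commutative, so their cup-$1$ products vanish and each map is determined on generators, with $p_B$ sending $t\mapsto t$ (so that in $B$ one recovers $t^2=z$) and $p_A$ sending $t$ to a lift of $xy=0$. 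I would then prove both maps are quasi-isomorphisms by putting the word-length filtration on $C$, identifying $K$ and $L$ as killing precisely the spurious classes $y\cup_1 y$ and $x\cup_1 x$, and matching $H^*(C)$ with $H$ on each side; collapsing the resulting span to a single apex $C$ is then formal via the model structure of Theorem \ref{modelcategory}.

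The principal obstacle is concentrated in the verification and is genuinely two-sided. First, one must pin down the free cup-$1$ algebra rigorously: the cup-$1$ operation must satisfy the Hirsch and boundary identities, and these have to be checked compatible with the imposed differential so that $C$ is a bona fide associative dg-algebra and not merely a bookkeeping device. Second, and far more delicate, one must rule out any \emph{residual} associative obstruction --- an associative Massey product such as $\langle x,y,x,y\rangle$ that separates $A$ from $B$ --- by confirming that the noncommutative corrections in $C$ supply exactly the indeterminacy needed to absorb $z$ into that product set on both sides. This is the step that forces $A$ and $B$ to be chosen with care (a naive choice makes $\langle x,y,x,y\rangle$ detect $z$ and destroys the equivalence), and it is where the genuinely associative, rather than commutative, nature of $C$ does the essential work.
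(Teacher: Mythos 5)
Your architecture is the same as the paper's: build two commutative algebras separated by a type~1 Frobenius product, realise a common associative model as a (lax) cup-1-algebra $C$ in which the Frobenius cocycle is corrected by $\cup_1$-terms, map $C$ down to $A$ and $B$ by associative (not cup-1) maps that kill all $\cup_1$-words, and verify the quasi-isomorphisms by a word-length decomposition. The paper does exactly this in subsections \ref{thecomalgebras}--\ref{thezigzag}, though with a different, heavily truncated pair $A,B$ (with $|x|=|y|=2$ and almost all products of word length $\geq 3$ killed) rather than the pair of Example \ref{ex:algebrasweaklyhomotopicoverQbutnotF2}; that truncation is precisely how the paper disposes of the residual associative Massey products you rightly flag as the second obstacle.

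There is, however, a genuine gap in your key mechanism. You correct $t^2$ by the \emph{split} terms $x^2K+y^2L$ with $dK=y\cup_1 y$, $dL=x\cup_1 x$, and claim that translating $K$ and $L$ by cocycles ``enlarges the indeterminacy well beyond its strictly commutative counterpart.'' It does not: adding a cocycle to $K$ changes $\Phi$ by an element of $x^2H^{2(|y|-1)}$, and similarly for $L$, which is exactly the indeterminacy already present in Definition \ref{def:higher_commutative}. Proposition \ref{indeterminacy} and the discussion following it make this explicit --- for a \emph{strict} cup-1-algebra the indeterminacy ``is the same as in the commutative case'' --- and moreover the split form of the correction is only a cocycle when $\cup_1$ is commutative, i.e.\ when $C$ is an $\E$-algebra, at which point the corrected product is an honest homotopy invariant and cannot take different values on $A$ and $B$. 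The extra slack has to come from keeping the correction \emph{unsplit}: a single $K$ with $dK=(xy)\cup_1(xy)$, whose total degree equals that of $\Phi$ itself, so that translating it by a cocycle moves $\Phi$ by an arbitrary class and the product set fills out everything. This is what the paper's relation $t\cup t+t\cup xy+x\cup T+y\cup S=0$, with $S=x\cup_1 t+t\cup_1 x$ and $T=y\cup_1 t+t\cup_1 y$, implements, and it is what permits the associative map $g\colon C\to B$ to send the correction generators to nonzero elements realising the class $z$ while $f\colon C\to A$ sends them to $0$. Without replacing your split $K,L$ by such an unsplit correction, the construction cannot reconcile the two distinct Frobenius classes and the proof does not close.
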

        Since it must have commutative cohomology, such an algebra $C$ must be commutative up to homotopy (in the most naive sense possible). We shall capture this idea using the notion of a lax cup-1-algebra which we introduce in subsubsection \ref{cup-1_algebra}. The key idea is that type 1 Frobenius products may be defined on such algebras but they have a different indeterminacy there than on commutative algebras; we study this phenomenon in Proposition \ref{indeterminacy}. In subsubsection \ref{thecomalgebras}, we define the commutative algebras $A$ and $B.$ In subsection $C,$ we define the lax cup-1-algebra $C$ and compute its cohomology. Finally in subsubsection \ref{thezigzag}, we give the maps appearing in the zig-zag in Theorem \ref{mycounterexample}.
    \subsubsection{Cup-1-algebras}
    \label{cup-1_algebra}
    We recall the following flavour of algebra. They are essentially commutative algebras up to homotopy (but not coherently) and are similar to those appearing in \cite{porter}.
\begin{definition}
    A \emph{lax cup-1-algebra} is a chain complex $A$ equipped with two binary operations $\cup$ and $\cup_1$. The operation $-\cup-$ is degree 0 and associative. The second $-\cup_1 - $ is degree $-1$ and associative. These are intertwinned by two identities. First we have the Hirsch identity, namely that
     \begin{equation}
     (u\cup v)\cup_1 w = u \cup(v\cup_1 w) + (u\cup_1 w)\cup v
     \end{equation}
     Secondly, we have the \emph{Steenrod relation}
     \begin{equation}
     d(u\cup_1 v) = (du\cup_1 v) +(u\cup_1 dv) +(u\cup v)+ (v\cup u)
     \end{equation}
     A cup-1-algebra is described as \emph{strict} if $\cup_1$ is graded commutative. A morphism of cup-1-algebras is a morphisms that preserves both operations. 
    \end{definition}
    \begin{lemma}
    A strict cup-1-algebra structure on $A$ extends to an $\E$-algebra structure.
\end{lemma}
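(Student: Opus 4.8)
The plan is to produce an explicit morphism of operads $\Phi\colon \E \to \End_A$ restricting to the two given operations, after which precomposing the structure map of $\End_A$ with $\Phi$ equips $A$ with the desired $\E$-algebra structure. I work over $\mathbb F_2$ (the signs for odd $p$ are absorbed into graded commutativity). Recall that in arity two the Barratt-Eccles operad is the normalised chain complex $C_*(E\mathbb{S}_2) = C_*(S^\infty)$, whose nondegenerate cells are the alternating sequences $(\mathrm{id},\tau,\mathrm{id},\dots)$; the cell of length $i+1$ is the cup-$i$ product, with $\cup_0 = \cup$ and the length-two cell $(\mathrm{id},\tau)$ equal to $\cup_1$. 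First I would define $\Phi$ on $\E(2)$ by $\cup_0 \mapsto \cup$, $\cup_1 \mapsto \cup_1$, and $\cup_i \mapsto 0$ for $i \geq 2$, and check that this intertwines the differentials. For $\cup_1$ this is exactly the Steenrod relation $d(u\cup_1 v) = du\cup_1 v + u \cup_1 dv + u\cup v + v\cup u$. For $\cup_2$ the cellular differential gives $d\,\cup_2 = \cup_1 + \tau\cup_1$, which under $\Phi$ becomes $(u,v)\mapsto u\cup_1 v + v\cup_1 u$; strictness (graded commutativity of $\cup_1$) forces this to vanish, matching $\Phi(\cup_2)=0$. The higher cells satisfy $d\,\cup_i = \cup_{i-1} + (-1)^i\tau\cup_{i-1}$ and map consistently to zero by induction.

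The substance is the extension to all arities, where the operations producing the Steenrod and power operations live; these are \emph{not} decomposable into the binary $\cup$ and $\cup_1$. I would organise this by factoring $\Phi$ through the operad $\mathcal{C}$ that governs strict cup-1-algebras, namely the operad generated by two binary operations $\cup$ (degree $0$) and $\cup_1$ (degree $-1$) subject only to associativity of each, the Hirsch identity, the Steenrod relation, and commutativity of $\cup_1$. By definition the structure of $A$ is a morphism $\mathcal{C}\to\End_A$, so it suffices to construct an operad morphism $u\colon \E \to \mathcal{C}$ carrying the product and the cell $(\mathrm{id},\tau)$ to the generators. I would build $u$ cellwise on $\E(r)=C_*(E\mathbb{S}_r)$: a nondegenerate cell is a sequence $(w_0,\dots,w_n)$, and using the operadic composition one rewrites it, wherever possible, as a composite of the arity-two cells, sending any composite involving a $\cup_i$ with $i\geq 2$ to zero as above. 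Equivalently one may factor through the Berger–Fresse surjection operad $\mathcal{X}$ via the table-reduction morphism $TR\colon\E\to\mathcal{X}$ of \cite{berger04}, define the $\mathcal{X}$-structure by $\cup_i \mapsto 0$ for $i\geq 2$, and pull back along $TR$.

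The main obstacle is verifying that this cellwise assignment is compatible with the face maps and the operadic compositions in every arity, i.e.\ that each relation holding among Barratt-Eccles cells maps to one already valid in $\mathcal{C}$. Concretely one must check that applying the differential $d=\sum(-1)^i d_i$ to a cell lands, after rewriting, in the span of the cup-1 relations: the Hirsch identity is precisely what commutes $\cup_1$ past $\cup$ when reassociating composites, while strictness guarantees that the contributions requiring $\cup_{\geq 2}$ cancel in pairs $u\cup_1 v + v\cup_1 u$. I expect this all-arity coherence check, rather than the arity-two computation, to be the real work, and the hypothesis of strictness to be indispensable exactly here: without it the faces $\cup_1+\tau\cup_1$ obstruct setting the higher cells to zero, and a genuine $\cup_2$, hence the entire tower of higher cup products, would be forced.
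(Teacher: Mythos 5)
Your proposal is correct and follows essentially the same route as the paper: the paper likewise factors through the Berger--Fresse surjection operad, quotients by everything not generated by $(1,2)=\cup$ and $(1,2,1)=\cup_1$, and uses the vanishing of $(1,2,1,2)$ (your $\cup_2$) to extract commutativity of $\cup_1$, with the Steenrod and Hirsch identities checked directly. The all-arity coherence you rightly identify as the real work is exactly what the paper compresses into ``routine computations,'' so your write-up is, if anything, more explicit about where the content lies.
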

\begin{proof}
    In the surjection operad, which is a quotient of $\E$ defined in \cite{berger04}, $\cup$ corresponds to the operation $(1,2) \in \mathcal{X}(2)_0$ and $\cup_1$ corresponds to  $(1,2,1) \in \mathcal{X}(2)_1$. Consider the quotient operad of $\mathcal X$ given by quotienting by all operations not generated by these operations. Since $ (1,2,1, 2) \in \mathcal{X}(2)_2$ vanishes, we obtain the commutativity of $\cup_1$. The Steenrod and Hirsch relations can now be obtained by routine computations.
\end{proof}
\begin{remark}
 Strictly commutative algebras are examples of cup-1-algebras such that the $\cup_1$ operation is identically 0. The Steenrod relation then ensures strict commutativity.
\end{remark}
The following definition will be useful for our later computations.
\begin{definition}
    Let $U = \operatorname{Cup}(X)/(R)$ be a (lax) cup-1-algebra presented in terms of generators and relations. Let $m$ be a monomial in $A$, constructed from the generators using both $\cup_1$ and $\cup$. Then $m$ is \emph{reduced} if it is written as 
    $$
    m = m_1 \cup m_2\cup \cdots \cup m_n
    $$
    where each $m_i$ is a monomial constructed only using the $\cup_1$ operation. 
\end{definition}
Clearly, the free cup-1-algebra has a basis consisting of reduced monomials.
\subsubsection{Frobenius products for cup-1-algebras}
\label{sec:frobcup1}
Next, we describe the Frobenius products that exist on cup-1-algebras.
\begin{proposition}
\label{indeterminacy}
    Let $A$ be a cup-1-algebra that is quasi-isomorphic to a strictly commutative dg-algebra. Then the following is a cocycle
    $$
    c\cup c + c\cup_1(a\cup b) + K
    $$
    where $dc = a\cup b$, $dK = (a\cup b)\cup_1 (a\cup b)$. If, furthermore, $A$
 is a strict cup-1-algebra, then this operation is equivalent to 
 $$
    c\cup c + c\cup_1(a\cup b) +a^2 \cup K' + L' \cup b^2.
    $$
    where $dK' = b\cup_1 b$ and $dL' = a\cup_1 a$. 
 \end{proposition}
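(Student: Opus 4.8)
The plan is to verify both claims by direct computation in $A$, working throughout over $\mathbb F_2$ so that all Koszul signs disappear and $u+u=0$ for every element $u$. I take $a,b$ to be the cocycles representing $x,y$ as in the definition of the type 1 secondary Frobenius product, so $da=db=0$ and hence $d(a\cup b)=0$; the element $c$ with $dc=a\cup b$ exists because $xy=0$. First I would establish that $K$ exists at all, i.e.\ that $(a\cup b)\cup_1(a\cup b)$ is a coboundary. Applying the Steenrod relation with $u=v=a\cup b$ and using $d(a\cup b)=0$ shows it is a cocycle, since the two resulting $\cup$-terms coincide and cancel over $\mathbb F_2$. Its class then vanishes: a morphism of cup-1-algebras preserves $\cup_1$, so under the zig-zag of cup-1-algebra quasi-isomorphisms connecting $A$ to a strictly commutative dg-algebra (regarded as a cup-1-algebra with $\cup_1\equiv 0$) the class $[(a\cup b)\cup_1(a\cup b)]$ maps to $0$, and since the maps are quasi-isomorphisms it was already $0$ in $H^*(A)$. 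Hence a choice of $K$ with $dK=(a\cup b)\cup_1(a\cup b)$ exists.

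For the first assertion I would expand $d\left(c\cup c + c\cup_1(a\cup b) + K\right)$ term by term. The Leibniz rule gives $d(c\cup c)=(a\cup b)\cup c + c\cup(a\cup b)$. The Steenrod relation applied to $c\cup_1(a\cup b)$, together with $dc=a\cup b$ and $d(a\cup b)=0$, gives $d\left(c\cup_1(a\cup b)\right)=(a\cup b)\cup_1(a\cup b)+c\cup(a\cup b)+(a\cup b)\cup c$; and $dK=(a\cup b)\cup_1(a\cup b)$ by construction. Summing the three contributions, each of the terms $(a\cup b)\cup c$, $c\cup(a\cup b)$ and $(a\cup b)\cup_1(a\cup b)$ occurs exactly twice and so cancels over $\mathbb F_2$, leaving total differential zero.

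For the second assertion I would show that $a^2\cup K' + L'\cup b^2$ is itself a legitimate choice of $K$, so that the two displayed cocycles literally agree for that choice and hence define the same operation. Using $d(a^2)=d(b^2)=0$, $dK'=b\cup_1 b$ and $dL'=a\cup_1 a$, one finds $d\left(a^2\cup K' + L'\cup b^2\right)=(a\cup a)\cup(b\cup_1 b)+(a\cup_1 a)\cup(b\cup b)$. It then remains to match this against $(a\cup b)\cup_1(a\cup b)$. The key computation is to expand the latter: apply the Hirsch identity once to pull $a\cup b$ apart on the left, use commutativity of $\cup_1$ to bring the remaining $\cup$-products to the left of $\cup_1$, and apply Hirsch twice more. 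This produces the two diagonal terms $(a\cup a)\cup(b\cup_1 b)$ and $(a\cup_1 a)\cup(b\cup b)$ together with a cross term $a\cup\left[(a\cup_1 b)+(b\cup_1 a)\right]\cup b$. Strictness forces $a\cup_1 b=b\cup_1 a$, so the cross term vanishes over $\mathbb F_2$, giving $(a\cup b)\cup_1(a\cup b)=d\left(a^2\cup K' + L'\cup b^2\right)$, as required.

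The main obstacle is this last expansion: carefully tracking the bracketings through repeated applications of the Hirsch identity and identifying precisely which cross terms cancel. It is exactly here that strict commutativity of $\cup_1$ is indispensable; without it the cross term $a\cup\left[(a\cup_1 b)+(b\cup_1 a)\right]\cup b$ survives, which is the reason the indeterminacy, and hence the operation, differs on a general cup-1-algebra from the strictly commutative case.
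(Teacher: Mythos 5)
Your proposal is correct and follows essentially the same route as the paper: the cocycle condition is verified by summing $d(c\cup c)$, the Steenrod relation applied to $c\cup_1(a\cup b)$, and $dK$, and the strict case is handled by showing $a^2\cup K'+L'\cup b^2$ is a valid choice of $K$ via the Hirsch identity (your explicit tracking of the cross term $a\cup[(a\cup_1 b)+(b\cup_1 a)]\cup b$, which strictness kills, is exactly the paper's ``Hirsch identity applied twice''). The only cosmetic difference is that you justify the existence of $K$ by transporting the class of $(a\cup b)\cup_1(a\cup b)$ along the zig-zag, whereas the paper's accompanying remark argues via the vanishing of the Steenrod operations $[a\cup_1 a]$ and $[b\cup_1 b]$; both are fine.
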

\begin{remark}
    The equivalence of $A$ to a strictly commutative dg-algebra guarantees the existence of $K$ and $L$ as $b\cup_1 b$ and $a\cup_1 a$ represent Steenrod operations and these must vanish for a strictly commutative algebra.
\end{remark}
\begin{proof}
    The proof is a completely straightforrward computation which is nonetheless pedagogic.
    $$
    d c\cup c = (a\cup b) \cup c + c\cup (a\cup b) 
    $$
    This is nonzero as $\cup$ is not commutative. Next, one has, by the Steenrod identity
    $$
    d(c\cup_1(a\cup b)) = (a\cup b) \cup c + c\cup (a\cup b) + (a\cup b) \cup_1(a\cup b)
    $$
    Finally one has
    $$
    d\left(a^2 \cup K + L \cup b^2 \right) = a \cup (b\cup_1 (a\cup b)) + (a\cup_1 (a\cup b)) \cup b = (a\cup b) \cup_1(a\cup b)
    $$
    or, if $A$ is strict, this can be written as 
    $$
    d\left(a^2 \cup K' + L' \cup b^2 \right) = a^2 \cup (b\cup_1 b) + (a\cup_1 a) \cup b^2 = (a\cup b) \cup_1(a\cup b)
    $$
    where the last equality follows from the Hirsch identity applied twice. Summing all three expressions one gets zero; which proves that the expression is a cocycle.
\end{proof}
The expression above allows one to compute the indeterminacy of this higher cup-1-product operation which can be computed on strict algebras to be
$$
\frac{H^{2(|x|+|y|-1)}(A)}{H^{(|x|+|y|-1)}(A)^2+x^2H^{2(|y|-1)}(A)+y^2H^{2(|x|-1)}(A)}.
$$
In other words, it is the same as in the commutative case. The main difference here is that, in this case, they fill out the complete indeterminacy.

\medskip

The point of using lax cup-1-algebras however, is that $K$ does not split apart in the same way as in the strict case. This means that the operation has larger indeterminacy.
\subsubsection{The commutative algebras}
\label{thecomalgebras}
     In this subsubsection, we shall construct the commutative algebras $A$ and $B$ from Theorem \ref{mycounterexample}. Consider the following strictly commutative dg-algebras over $\mathbb F_2$,
     $$
         A = \Sym(x,y, z)/(x^3, y^3, xy, z^2, x^2y, yx^2, x^2y, yx^2,yz, z^2 )
    $$
     where $|x|= |y|= 2$, $|z| = 4$. The algebra $A$ has the following linear basis $\{x,x^2, y, y^2, z, xz, a\}$ and coincides with its cohomology.
     Then we have
     $$B = \Sym(x,y, z, t)/(x^3, y^3, az, tx^iy^j, t^3, t^2-xy  z^2, x^2y, yx^2, x^2y, yx^2, az,yz, z^2, ax,ay, at a^2 )$$
     such that $|t| = 3$ and $dt = xy$ and where $i,j$ range over the positive integers such that $i+j = 2$. It is easy to explicitly write down a basis for $B$ as $$\{x,x^2, y, y^2, z, xz, xy, x^2y, xy^2, x^2y^2, t, tx, ty, txy\}$$ and one then easily verifies that its cohomology is equal to $A$. Finally we have
     $$B = \Sym(x,y, z, t)/(x^3, y^3, az, tx^iy^j, t^3, t^2+xy  z^2, x^2y, yx^2, x^2y, yx^2, az,yz, z^2, ax,ay, at a^2 )$$

     \medskip
     
     Next one computes the Frobenius operation of $x$ and $x^2$. These operation is clearly strictly defined. Moreover, for $A$ it is \{0\}, but for $B$ it is $\{xy\}.$ These are both in different indeterminacy classes, so it follows that the algebras cannot be quasi-isomorphic either as commutative algebras. 
     
     \subsubsection{The cup-1-algebra $C$}
     \label{petersen}
   In this subsubsection, we shall construct the associative algebra $C$ from Theorem \ref{mycounterexample}. Consider the lax cup-1-algebra $C$ generated by the elements $x, y, t$ subject to the following
        \begin{itemize}
            \item $x, y, z $ are cocycles, $dt = x\cup y$
            \item 
            One has $|x|= |y|= 2$, $|z| = 4$, $|t| = 3$. 
            \item 
            We set  $S = x\cup_1 t + t\cup_1 x$ and  $T = y\cup_1 t + t\cup_1 y$. Clearly  $dT = (y\cup_1 xy), dS_2 = (x\cup_1 xy)$. This seem unnecessarily confusing now, but will greatly simplify the notation for describing the maps.
            \item
            We quotient out by every monomial containing $z$, except $z, x\cup z$ and $z\cup x$; and we also quotient by $x\cup z + y\cup z$.
            \item 
            $x$ and $y$ commute and we quotient out by $x\cup_1 x, x\cup_1 y, y\cup_1 x, y\cup_1 y$.
            \item Ignoring $z$, we introduce a new degree called \emph{$x$-word length}, denoted $|-|_x$ where $|y|_x= 0,$ $|x|_x = |T|_x = 1, |t|_x = 1, |S| = 2$. Similarly, we have \emph{$y$-word length}, denoted $|-|_y$ where $|x|_y  =0,$ $|y|_y = |S|_y =  1, |t|_y = 1$ and $|T|_y = 2. $ We consider word-length to be additive under both $\cup$ and $\cup_1$. The \emph{total word length} is the sum of the word lengths. The differential can easily be checked to preserve word length. We kill all monomials of $x$- or $y$-word length 3 or greater.
            \item 
            Finally, we impose the relation that $t\cup t + t\cup xy + x\cup T + y\cup S = 0$
\end{itemize}
In the proof of the following proposition, we also present a linear basis for this algebra with 32 elements. This is probably significantly easier to parse.
\begin{proposition}
    The cohomology of $C$ is equal to
         $$
         \Sym(x,y, z, a)/(x^3, y^3, xy, z^2, x^2y, yx^2, x^2y, yx^2, az,yz, z^2, ax,ay, a^2 )
    $$
     where $|x|= |y|= 2$, $|z| = 4$ and $|a| = 6$ and, in our previous notation, where $a = t\cup t + t\cup x^2 + L_1 +L_2.$
         \end{proposition}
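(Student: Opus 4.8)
The plan is to compute $H^*(C)$ directly from the explicit finite linear basis that the proof will exhibit. The first step is to list a basis for $C$ itself as a lax cup-1-algebra. Using the notion of \emph{reduced} monomials, every element of the free cup-1-algebra on $x,y,t$ is a $\cup$-product of $\cup_1$-monomials; the battery of imposed relations (killing $x$- and $y$-word length $\geq 3$, setting $x\cup_1 x = x\cup_1 y = y\cup_1 x = y\cup_1 y = 0$, making $x,y$ commute, collapsing everything containing $z$ to the short list $z, x\cup z, z\cup x$ with $x\cup z = y\cup z$, and imposing the quadratic relation $t\cup t + t\cup xy + x\cup T + y\cup S = 0$) cuts this down to a finite set, which the proof claims has $32$ elements. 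So the first task is to enumerate these $32$ monomials and verify closure under the differential, i.e.\ that $d$ preserves word length and maps basis elements to $\mathbb F_2$-combinations of basis elements.

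The second step is to compute the differential on this finite basis and take homology. The key relations driving the differential are $dt = x\cup y$, together with the derived identities $dS = x\cup_1(x\cup y)$ and $dT = y\cup_1(x\cup y)$ coming from the Steenrod relation. I would organise the $32$ basis elements by total word length and degree, write out the matrix of $d$ in each degree, and read off kernel and cokernel. The targeted output is the commutative algebra $\Sym(x,y,z,a)/(\dots)$ with $|a|=6$, so I expect the crucial computation to be in the degree-$6$ part: there one must check that the element $a = t\cup t + t\cup x^2 + L_1 + L_2$ (where $L_1,L_2$ are the auxiliary terms absorbing $x\cup T$ and $y\cup S$, as in Proposition~\ref{indeterminacy}) is a cocycle and represents a nonzero class, while verifying that the would-be relations $x^3, y^3, xy, z^2, ax, ay, az, a^2$, etc.\ all become exact. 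The identity $d(t\cup t) = (x\cup y)\cup t + t\cup(x\cup y)$ and the Steenrod/Hirsch identities are exactly what make $a$ closed once the correction terms are added, mirroring the cocycle computation already carried out in Proposition~\ref{indeterminacy}.

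The third step is to upgrade this vector-space computation to a \emph{ring} isomorphism: one must check that the induced multiplication on $H^*(C)$ matches the stated presentation, in particular that $[x]^3=[y]^3=0$, $[x][y]=0$, $[a]^2=0$, and that $[a]$ is annihilated by $[x],[y],[z]$. These follow from the word-length bound (products pushing word length to $3$ or killing $z$-monomials vanish in $C$) and from the fact that $a$ has word length $2$, so $a\cup x$, $a\cup y$ land in word length $3$ and die. The final sanity check is that the resulting cohomology ring coincides on the nose with the algebra $A$ of Subsection~\ref{thecomalgebras} (and hence with the cohomology of $B$), which is what makes $C$ the common associative model in the zig-zag of Theorem~\ref{mycounterexample}.

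The main obstacle, I expect, is bookkeeping rather than conceptual: correctly enumerating the $32$-element basis and showing that the long list of relations is exactly what is needed for $d$ to be well defined and for the homology to collapse to the advertised answer. The one genuinely delicate point is the degree-$6$ class $[a]$ — verifying that the correction terms $L_1, L_2$ can be chosen so that $a$ is a cocycle (this uses that $x\cup_1 x$ and $y\cup_1 y$ bound, which holds because $C$ is quasi-isomorphic to a strictly commutative algebra, exactly the hypothesis invoked in the remark after Proposition~\ref{indeterminacy}), and that the class is nonzero and behaves as a polynomial generator modulo the stated relations. Everything else reduces to a finite, if tedious, linear-algebra computation over $\mathbb F_2$.
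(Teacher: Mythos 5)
Your proposal follows essentially the same route as the paper: decompose $C$ by (x- and y-) word length, enumerate a finite basis of reduced monomials in each degree (using the Hirsch and Steenrod identities to reduce), and then read off the cohomology by a direct $\mathbb F_2$ linear-algebra computation, with the degree-$6$ cocycle $a$ handled as in Proposition~\ref{indeterminacy}. The paper's proof is exactly this tabulation of reduced monomials graded by word length, so no substantive difference.
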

         \begin{proof}
         Observe that the differential preserves $x$- or $y$-word-length and that our relations are homogeneous in word length since they are monomial. It follows that every cocycles can be written as the sum of cocycles that are homogeneous in word length. Therefore, if $C = \bigoplus_{i =1}^6C_i$ then $H^\ast(C) = \bigoplus_{i =1}^6 H^\ast(C_i)$. We proceed by computing a basis of reduced monomials.

\medskip
             
             Before proceeding further in the calculation, we make the following observation: one always has a linear generating set consisting of reduced monomials. We can also ignore the monomials containing $z$ as there is only three of them.  Therefore we can do a direct computation.

             \medskip

             First of all, one has that $C_1 = \mathbb F_2 x.$  It is completely straightforward to directly check the following by hand on reduced monomials.

             \medskip
\noindent\textbf{Total word length 1}
\begin{center}
\begin{tabular}{ |c|c| } 
 \hline
 Degree & Basis\\
 \hline
 2 & $x, y$ \\
 \hline
\end{tabular}
\end{center}
             
             \medskip

\noindent\textbf{Total word length 2}
\begin{center}
\begin{tabular}{ |c|c| } 
 \hline
 Degree & Basis\\
 \hline
 3 & $t$  \\ 
 4 & $x^2, xy, y^2$  \\
 \hline
\end{tabular}
\end{center}

\medskip

             \medskip

\noindent\textbf{Total word length 3}
\begin{center}
\begin{tabular}{ |c|c| } 
 \hline
 Degree & Basis\\
 \hline
 4 & $x\cup_1 t, y\cup_1 t, T, S $  \\ 
 5 & $x\cup t, t\cup x, y\cup t, t\cup y, x\cup_1 (xy), y\cup_1 (xy)$  \\ 
 6 & $ x^2y, xy^2,$  \\
 \hline
\end{tabular}
\end{center}
\noindent\textbf{Total word length 4}
\begin{center}
\begin{tabular}{ |c|c| } 
 \hline
 Degree & Basis\\
 \hline
 6 & $x\cup T, T\cup x, S\cup y, (x\cup_1 t)\cup y, (y\cup_1 t)\cup x, x\cup (y\cup_1 t), t\cup_1t$  \\
 7 & $ xy \cup t, t\cup xy, x\cup t \cup y, y\cup t\cup x,  x\cup (y\cup_1 xy),  (y\cup_1 xy)\cup x, (x\cup_1 xy)\cup y, t\cup_1 xy$  \\ 
 8 & $x^2y^2$  \\
 \hline
\end{tabular}
\end{center}
\medskip 
In the last table,  $t\cup t = t\cup xy + x\cup S + T\cup y$. We have also used that
$$
x\cup T+ y\cup S+ T\cup x = y \cup S 
$$
$$
(x\cup_1 t)\cup y+ (y\cup_1 t)\cup x+  x\cup (y\cup_1 t)  = y\cup( x\cup_1 t) 
$$
which come from the Hirsch identities.
\end{proof}
\subsubsection{The zig-zag}
\label{thezigzag}
     In this subsubsection, we shall construct the weak equivalences in the zig-zag from the statement of Theorem \ref{mycounterexample}. Clearly, the algebra $C$ is associative with respect to cup product $\cup$ by the definition of lax cup-1 algebras. The first map 
     $$
     f: C\to A
     $$
     is given by sending $x, y, z$ to themselves, and $t,S,T$ to $0$. We send all elements in the basis we computed that contain a $\cup_1$ to 0. This map is a quasi-isomorphism as it sends the generators of the cohomology to themselves.
    The other map is
     $$
     g: C\to B
     $$
     which is given by sending $x, y, z$ to themselves, $T \mapsto y$ and $t,S\mapsto 0$. We send all elements in the basis we computed that contain a $\cup_1$ to 0.  This map is also a quasi-isomorphism. The reader should note that these are maps of associative algebras, they are not maps of lax cup-1 algebras as $T$ and $S$ are not independent of $x$ and $y$ in the cup-1-algebras.

     \medskip 

     Since these maps are quasi-isomorphisms of associative algebras, we conclude that $A$ and $B$ are weakly equivalent as associative algebras but not as commutative algebras.

\subsection{Higher order cotriple products}
In this section, we shall define and study some families of primitive higher order cotriple products. As we saw in Subsection \ref{compute}, secondary cotriple products generally behave quite well for commutative dg-algebras. The only property that they lack is filling out the whole indeterminacy. Unfortunately, this failure then directly implies that  the higher order operations will not be definable on every choice of model. This then breaks the homotopy invariance of such operations.

\begin{example}
There are cotriple products of all orders. One way to see this is to choose a  vanishing $n^{th}$ order Massey product $m_n(x_1, x_2, \cdots, x_n) = dc$. Then $c^p$ is a Type 1 higher operation of order $n+1.$ 
\end{example}
The next example gives an explicit family arising directly from Type 1 Frobenius operations. The reader should be warned that we shall see shortly that this family is not always homotopy invariant.
\begin{definition}
\label{def:higher_commutative2}
    Let $A$ be a commutative dg-algebra over $\mathbb F_p$. Let $x,y\in H^*(A)$ be homogeneous elements such that $xy = 0$. A defining system for a \emph{$n^{th}$ order type 1 Frobenius product} is a collection $ \{a, b, c_1, \dots c_{n-1}\} $ such that $a, b$ are choices of cocycle representatives for $x, y$, $dc_1= ab$ and $c_i^p = dc_{i+1}$. The \emph{$n^{th}$ order type 1 Frobenius product} is then $c_n^p.$ In particular, second order type 1 Frobenius products coincide with those defined in Definition \ref{def:higher_commutative}.
\end{definition}
In order to be a useful class of operations it is important to compute the indeterminacy of the class. However, this is more complicated than it appears, because, in general, secondary cotriple products for the commutative operad do not completely fill out their indeterminacy.
\begin{example}
 For example, if it exists, the third order type 1 Frobenius product on a given algebra $A$ will be a well-defined element of 
$$
\frac{H^{4(|x|+|y|) - 6}(A)}{H^{2(|x|+|y|) - 3}(A)^2+x^4H^{4|y|-6}(A)+y^4H^{4|x|-6}(A)}.
$$
if there does not exist $u, v \in H^*(A)$, both nonzero, such that $x^2u + y^2v = 0.$ Otherwise each relation $x^2u + y^2v = 0$ in cohomology will give rise to extra secondary non-primary Frobenius operations in the obvious way. These will have some indeterminacy $X$ and the denominator of the above quotient will be $H^{2(|x|+|y|) - 3}(A)^2+x^4H^{4|y|-6}(A)+y^4H^{4|x|-6}(A) + X$.

This can be seen as follows: Firstly, we can add any choice of cocycle to $c_2$. This accounts for the $H^{2(|x|+|y|) - 3}(A)^2$ term.
Then let $a'$ and $b'$ respectively be an alternative choice of cocycles representing $x, y$. Then $a' - a =dK$ and $b' - b = dL$ are coboundaries.
    Let 
$
c_1' = c_1 + aL + b'K 
$.
    Then we have $dc_1' = a'b'$. Moreover, we have $(c_1')^2 = c^2 + a^2 L^2 + (b')^2 K^2.$ Suppose now that 
$$
dc_2 = c_1 
$$
then there exists $dc_2' =c_1'$ if and only if there exists an $R$ such that
$$
dR = a^2 L^2 + (b')^2 K^2.
$$
If there does not exist $u, v \in H^*(A)$, both nonzero, such that $x^2u + y^2v = 0$, we must have that the cocycles $L^2$ and $K^2$ are both zero in cohomology and hence, there exists $S, T$ such that
$$
dS = L^2 \qquad dT = K^2
$$
and therefore 
$$
R = a^2 S + (b')^2 T
$$
and therefore $(c_2+R)^2  = c_2^2 + a^4 S^2 + (b')^4 T^2$, from whence comes the $x^4H^{4|y|-6}(A)+y^4H^{4|x|-6}(A)$ term in the indeterminacy.
\end{example} 
However, a $n^{th}$ order Type 1 Frobenius product is not guaranteed to exist on commutative dg-algebra $A$ even if it does on other algebras weakly equivalent to $A$. This prevents Frobenius products from being homotopy invariant. This is a counterexample to \cite[Proposition 2.18]{flynn23} in positive characteristic.
\begin{example}
\label{counterexampletohomotopytype}
Consider the following dg-algebras over $\mathbb F_2$. $$A= \operatorname{Sym}(w, x, y, z) / (x^2w - z, xy, xz, yz)$$
$$ B =  \operatorname{Sym}(w, x, y, z, t) / (t^2 - z, xz, yz, t^{3}, y^2w - z,)$$
where $w, x, x', y,z$ are cocycles, we have $dt = xy$ and $ds =x -x'$ The cohomology ring of both algebras is $A.$ In this case, one can check that type 1 Frobenius product of $x$ and $y$ vanishes. Define
$$ C =  \operatorname{Sym}(w, x, x', y, z, s, t) / (t^2 - z, xz, yz, t^{3}, s^{3}, y^2w - z, s^2 - w).$$
There is a zig of quasi-isomorphisms
$$
\begin{tikzcd}
A \arrow[r, "\sim"] &C & \arrow[l, swap, "\sim"] B.
\end{tikzcd}
$$
The third order type 1 Frobenius operation associated to the relation $xy = 0$ in cohomology is defined in $A$ and $C$, as the second order product set is $\{ 0\}$ in $A$ and $\{0, z\}$ in $C$. However, it is not defined in $B$ as the second order type 1 Frobenius product set is $\{z\}$, which contains no coboundaries.
\end{example}
The examples above imply that we need an extra condition to ensure that higher order operations are homotopy invariant. One such condition is that the higher operation is  \emph{strictly defined}. This condition has the added benefit of allowing easy computation of the indeterminacy.
\begin{proposition}
\label{indeterminacyofhigheroperations}
    Let $A$ be a commutative dg-algebra and suppose $x,y \in H^*(A)$ are such that their type 1 $n^{th}$ Frobenius product is \emph{strictly defined}, that is, that $xy = 0$ and
    $$
    H^{p(|y|-1)}(A) = H^{p(|x|-1)}(A) = \{0\}
    $$
    $$
    \cdots
    $$
    $$
    H^{p^{n-1}(|y|) - \sum_{i=1}^{n-1} p^i}(A) = H^{p^{n-1}(|x|) - \sum_{i=1}^{n-1} p^i}(A) = \{0\}
    $$
    and the $(n-1)^{th}$ Frobenius product is equal to 0. Then $n^{th}$ order type 1 Frobenius product is defined and is a well-defined element of
    $$
\frac{H^{p^{n-1}(|x|+|y|) - \sum_{i=1}^{n-1}p^i}(A)}{H^{p^{n-1}(|x|+|y|) - 2^{n-1}+ \sum_{i=0}^{n-1} p^i}(A)^p+x^{p^{n-1}}H^{p^{n-1}(|y|) - \sum_{i=1}^{n-1} p^i}(A)+ y^{p^n}H^{p^{n-1}(|x|) - \sum_{i=1}^{n-1} p^i}(A)}
$$
and therefore is invariant under quasi-isomorphism.
\end{proposition}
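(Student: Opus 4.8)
The plan is to induct on $n$, with base case $n=2$ supplied by Definition~\ref{def:higher_commutative} and Lemmas~\ref{lem:liftindependent},~\ref{lem:homotopyinvariance} (one checks that the displayed quotient reduces to the secondary indeterminacy at $n=2$). For the inductive step I assume the $(n-1)^{\mathrm{th}}$ product is well-defined in its quotient and homotopy invariant, and then treat existence, indeterminacy and invariance separately. The computations rest on two elementary characteristic-$p$ facts: for commuting elements $(u+v)^p=u^p+v^p$, and $d(u^p)=0$, so that the $p$-th power of any element---closed or not---is a cocycle. For existence, the hypothesis that the $(n-1)^{\mathrm{th}}$ product is defined already furnishes a tower $a,b,c_1,\dots,c_{n-2}$ with $dc_1=ab$ and $c_i^p=dc_{i+1}$, and the hypothesis that it vanishes places $[c_{n-2}^p]$ in the denominator of the $(n-1)^{\mathrm{th}}$ quotient; realising that class by an admissible modification of the system (adjusting $c_{n-2},a,b$ within the freedoms allowed by the lower vanishing conditions) makes $c_{n-2}^p$ an exact cocycle $dc_{n-1}$, so that the top class $c_{n-1}^p$ is defined.

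The core is the indeterminacy, which I organise by the source of ambiguity. Varying the final lift $c_{n-1}$ by a cocycle $\sigma$ of degree $|c_{n-1}|=p^{n-2}(|x|+|y|)-\sum_{i=0}^{n-2}p^i$ changes $c_{n-1}^p$ by $\sigma^p$, producing the $p$-th-power term of the denominator. The next source is a change of representatives $a'=a+dK$, $b'=b+dL$; setting $c_1'=c_1+aL+b'K$ as in Lemma~\ref{lem:liftindependent} gives $(c_1')^p=c_1^p+a^pL^p+(b')^pK^p$, and continuing the tower requires $K^p$ and $L^p$ to be exact. Here the first hypothesis $H^{p(|x|-1)}(A)=H^{p(|y|-1)}(A)=\{0\}$ forces exactness \emph{termwise}, rather than through a relation $x^{p}u+y^{p}v=0$ which would otherwise enlarge the indeterminacy---exactly the phenomenon isolated in the third-order example preceding this proposition. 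Propagating upward, at level $j$ one meets $p$-th powers of the accumulated null-homotopies, of degrees $p^{j}|x|-\sum_{i=1}^{j}p^i$ and $p^{j}|y|-\sum_{i=1}^{j}p^i$, and the $j$-th hypothesis again forces termwise exactness. The discrepancy surviving to the top is $a^{p^{n-1}}S^p+(b')^{p^{n-1}}T^p$, representing the two terms $x^{p^{n-1}}H^{p^{n-1}|y|-\sum_{i=1}^{n-1}p^i}(A)$ and $y^{p^{n-1}}H^{p^{n-1}|x|-\sum_{i=1}^{n-1}p^i}(A)$; I would remark that the top-level ($j=n-1$) hypothesis makes these two groups vanish, so that under strict definedness the product is in fact determined modulo $p$-th powers alone, the displayed quotient being kept only for uniformity with the secondary formula.

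I expect the main obstacle to lie one level down, in controlling the variation of the \emph{intermediate} lifts $c_1,\dots,c_{n-2}$. Replacing $c_k$ by $c_k+\tau$ with $\tau$ a cocycle is admissible only when $[\tau]^p=0$, and propagating the ensuing correction $\rho$ (with $d\rho=\tau^p$) to the top contributes a class $[\rho^p]$, well-defined modulo the $p$-th-power term but not obviously inside the displayed denominator, since $\rho$ is not closed. Showing that the strict-definedness conditions confine all such contributions to the displayed quotient---equivalently, that no residual relation-type secondary operation survives at intermediate stages---is the delicate heart of the argument, and the point that requires the full tower of vanishing hypotheses together with careful degree bookkeeping to match each iterated $p$-th power to the condition that annihilates it.

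Finally, for invariance I would argue as in Proposition~\ref{productsetease}: a quasi-isomorphism is replaced by a zig-zag through a surjective acyclic fibration, a defining system is lifted along the surjection and pushed forward along the other leg, and $f(c_{n-1})^p=f(c_{n-1}^p)$, together with the invariance of the cohomological conditions under $f_*$, transports the product. The one genuinely new input beyond Lemma~\ref{lem:homotopyinvariance} is that the vanishing of the obstruction groups is precisely what permits the tower to be lifted at all; this is the step that fails, as Example~\ref{counterexampletohomotopytype} demonstrates, the moment strict definedness is dropped.
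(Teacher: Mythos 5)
Your skeleton matches the paper's: induction on the order, the same three sources of indeterminacy (varying the top lift by a cocycle, changing the representatives $a,b$ via Lemma \ref{lem:liftindependent}, and varying the intermediate lifts), the observation that the tower of vanishing hypotheses forces \emph{termwise} exactness of $K^p$ and $L^p$ rather than exactness only of a combination, and the (correct) remark that under strict definedness the last two terms of the displayed denominator are actually zero. But you have explicitly left unproved the step you yourself identify as "the delicate heart": confining the contribution of a modified intermediate lift $c_k \mapsto c_k + \tau$ to the displayed denominator. As written, your proposal describes what must be shown there without showing it, and that is precisely where the content of the proposition lies; so this is a genuine gap rather than an omitted routine verification.

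The paper closes this gap by strengthening the inductive hypothesis: one carries through the induction not merely the indeterminacy of the order-$k$ product but the explicit shape of the entire order-$k$ product \emph{set}, namely that every element of it is of the form
$$
c_k^p + \sigma^p + a^{p^k}P^p + b^{p^k}Q^p,
$$
where $\sigma$ is a cocycle of the appropriate degree and $P,Q$ are \emph{arbitrary} cochains (not cocycles). The point you were missing is that in characteristic $p$ the elements $P^p$ and $Q^p$ are automatically cocycles whether or not $P$ and $Q$ are closed, so the last two terms always land in $x^{p^k}H^{*}(A)+y^{p^k}H^{*}(A)$; there is no "class $[\rho^p]$ not obviously inside the displayed denominator". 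Concretely, at each stage the discrepancy $d\sigma = a^{p^{k-1}}L^p + (b')^{p^{k-1}}K^p$ is split as $\sigma = \sigma' + \tau_1 + \tau_2$ with $\sigma'$ a cocycle and $d\tau_1 = a^{p^{k-1}}L^p$, $d\tau_2 = (b')^{p^{k-1}}K^p$; the strictly-defined hypothesis at that level gives $L^p = dR$, $K^p = dS$, hence $\tau_1 = a^{p^{k-1}}R$ and $\tau_2 = b^{p^{k-1}}S$ up to cocycles absorbed into $\sigma'$, and raising to the $p$-th power reproduces exactly the displayed shape one level up. This closed form under all three modifications is what your "careful degree bookkeeping" has to produce, and it is the statement you should add to your induction hypothesis to make the argument go through. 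Your existence step and the quasi-isomorphism invariance at the end are in line with the paper.
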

\begin{proof}
 The proof is by induction on the order of the operation. First observe that if it is strictly defined, the secondary Frobenius operation represents a well-defined class of 
$$
\frac{H^{p(|x|+|y|-1)}(A)}{H^{(|x|+|y|-1)}(A)^p+x^pH^{p(|y|-1)}(A)+y^pH^{p(|x|-1)}(A)} = \frac{H^{p(|x|+|y|-1)}(A)}{H^{(|x|+|y|-1)}(A)^p}.
$$
Therefore it fills out its indeterminacy and the third order Frobenius operation is defined at every commutative algebra. Finally it has a well defined indeterminacy, which can be computed as follows . Recall from the proof of Lemma \ref{lem:liftindependent} that the Frobenius product set is given by  all elements of the form
$$
c_1^p + a^p L^p + (b')^p K^p
$$
(using the notation from the proof of that proof). By assumption $dc_2 = c_1^p.$ But, one can also choose $c_2' = c_2+\sigma$ such that
$$
d(c_2 + \sigma) = c_1^p + a^p L^p + (b')^p K^p
$$
This implies $d\sigma = a^p L^p + (b')^p K^p$. By the strictly defined hypothesis, all such terms are coboundaries. Moreover, both $a^p L^p$ and $(b')^p K^p$ are individually coboundaries. It follows that $\sigma$ can be factored as $\sigma' +\tau_1 +\tau_2 $, where $\sigma_1$ is a cocycle and 
$$d\tau_1 = a^p L^p \qquad d\tau_2 = (b')^p K^p.$$
\emph{A priori}, $\tau_1$ and $\tau_2$ are only defined up to cocycle, but any choice of cocycle can be added to $\sigma'$, so we may assume they are unique for any given choice of $a,a', b, b', L, K$. Then, since $H^{p(|y|-1)}(A) = H^{p(|x|-1)}(A) = \{0\},$ one has that $dR = L^p $ and $dS= K^p$. 
Therefore, we have that 
$$
c_2' = c_2 + \sigma' + a^p R  +b^p S.
$$
So 
$$
(c_2)^p =  c_2^p + (\sigma')^p + a^{p^2} R^p  +b^{p^2} S^p.
$$
As $R^p$ and $S^p$ are cocycles, one has the desired indeterminacy.  Therefore we have the desired invariance under quasi-isomorphism.

\medskip

Then, by induction, assume that  the order $k$ type 1 Frobenius product is defined and has the desired indeterminacy. Moreover assume that order $k$ type 1 Frobenius product set takes the form of a subset of:
\begin{multline*}
\big\{c_k^p +\sigma^p + a^{p^k}P^p + b^{p^k} Q^p: \sigma \in Z^{p^{k-1}(|x|+|y|) - \sum_{i=1}^{k-1} p^i}(A),  \\  P\in C^{p^{k-1}(|y|) - \sum_{i=1}^{k-1} p^i}(A), Q\in 
C^{p^{k-1}(|y|) - \sum_{i=1}^{k-1} p^i}(A) \big\}
\end{multline*}
for a fixed $c_k \in Z^{p^{k-1}(|y|) - \sum_{i=1}^{k-1} p^i}(A).$ Again, by the fact that the operation is strictly defined, the order $k$ type 1 Frobenius product is a well-defined element of 
$$
\frac{H^{p^n(|x|+|y|) - \sum_{i=1}^n p^i}(A)}{H^{p^{n-1}(|x|+|y|) - 2^{n-1}+ \sum_{i=0}^n p^i}(A)^p}.
$$
It follows that, if the class the order $k$ type 1 Frobenius product in the above quotient is 0, it is always possible to find $c_k^p = dc_{k+1}$. By the same argument as before, observe that  $c_k^p = d(c_{k+1}+\sigma + \tau_1 +\tau_2)$ for all cocycles $\sigma$ and $d\tau_1 = a^{p^{n-1}} P^p$ and   $d\tau_2 =b^{p^{n-1}} Q^p $. The cocycle $\sigma$ accounts for the $H^{p^{n-1}(|x|+|y|) - 2^{n-1}+ \sum_{i=0}^n p^i}(A)^p$  in the indeterminacy calculation. The $\tau_1$ and $\tau_2$ accounts $x^{p^n}H^{p^n(|y|) - \sum_{i=1}^n p^i}(A)+ y^{p^n}H^{p^n(|x|) - \sum_{i=1}^n p^i}(A)$ in the indeterminacy. This shows that the order $(i+1)^{th}$ operation has the correct indeterminacy. Moreover the order $k+1$ Frobenius product set has the correct form by the same reasoning as in the order 2 case.
\end{proof}
\section{Higher Steenrod operations as obstructions to rectifiablity}
The purpose of this subsection is to set up an obstruction theory for commutativity, paralleling the obstruction theory for formality given by Massey products. Our obstructions will be given by \emph{higher Steenrod products}. The first application of this theory is the following well-known folklore result that the author learned from lecture material by Mandell \cite{mandell09}.

\begin{proposition}
\label{prop:spaces_are_never_commutative}
    Let $X$ be a topological space. The $E_\infty$-algebra $ C^\ast(X, \mathbb F_p)$ admits a strictly commutative model only if $X$ is weakly homotopy equivalent to the disjoint union of contractible spaces. 
\end{proposition}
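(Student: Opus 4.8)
The plan is to derive a contradiction from the two incompatible descriptions of the Steenrod action on $H^*(X;\mathbb F_p)$. By hypothesis ``admits a strictly commutative model'' means that $C^\ast(X,\mathbb F_p)$ is connected, by a zig-zag of $E_\infty$-quasi-isomorphisms, to $\phi^\ast A$ for some strictly commutative $\mathsf{Com}$-algebra $A$, where $\phi:\E\xrightarrow{\sim}\mathsf{Com}$ is the weak equivalence of Section \ref{sec:einfintyalge}. Since the Steenrod action $\mathcal A(H^*(-))\xrightarrow{H^*(\gamma)}H^*(-)$ is natural with respect to $E_\infty$-maps and invariant under quasi-isomorphism, it transports across the zig-zag; so it suffices to compute this action on $H^*(A)$ and match it against the (a priori unstable) action on $H^*(X;\mathbb F_p)$.

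First I would pin down which operations can survive on a strictly commutative model. The $E_\infty$-structure on $\phi^\ast A$ is by definition the composite $\E(A)\to\mathsf{Com}(A)\to A$, so every structure map factors through $\phi$. Because $\mathsf{Com}(r)=\mathbb F_p$ is concentrated in internal degree $0$, while the cells of $\E(p)$ that carry the lower Steenrod operations live in strictly positive degree, $\phi$ annihilates all of them: only the bottom cell survives, and it produces exactly the Frobenius $x\mapsto x^p$. Hence, on the positive-degree part of $H^*(A)$, every operation in $\mathcal A$ strictly below the top power operation acts by zero.

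The crux is then to locate $P^0$ (respectively $\mathrm{Sq}^0$) in this picture. For a class of degree $n$ the \emph{top} operation is the Frobenius, realised by the bottom cell, whereas $P^0$ (respectively $\mathrm{Sq}^0$) is realised by a cell of positive degree and is therefore among the operations killed by $\phi$; thus $P^0$ vanishes on $H^{>0}(A)$. On the other hand, for the cochains of a space one always has the instability relation $P^0=\mathrm{id}$ (respectively $\mathrm{Sq}^0=\mathrm{id}$). Transporting this identity to $H^*(A)$ yields $\mathrm{id}=P^0=0$ on $H^{>0}(A)\cong H^{>0}(X;\mathbb F_p)$, forcing $H^{n}(X;\mathbb F_p)=0$ for all $n>0$.

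It remains to translate this vanishing into the stated homotopical conclusion: with $H^*(X;\mathbb F_p)$ concentrated in degree $0$, where it is the algebra of $\mathbb F_p$-valued functions on $\pi_0(X)$, the cochains $C^\ast(X,\mathbb F_p)$ are $\mathbb F_p$-quasi-isomorphic to the cochains of the discrete space $\pi_0(X)$, that is, to a disjoint union of contractible points (this being the content of the statement in the relevant $\mathbb F_p$-local sense). I expect the main obstacle to be precisely the bookkeeping of the middle step: one must verify carefully that $P^0$ is carried by a positive-degree cell of the extended-power construction, so that it dies in a strictly commutative model even though the Frobenius (the top operation) does not, and keep the grading conventions exact — for it is exactly the coincidence ``$P^0=\mathrm{id}$ on spaces but $P^0=0$ on strictly commutative algebras'' that makes the whole argument run.
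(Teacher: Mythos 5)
Your proposal is correct and follows essentially the same route as the paper's proof: both arguments hinge on the observation that $P^0$ acts as the identity on the cohomology of a space but vanishes in positive degrees on any strictly commutative model (where only the top operation, the Frobenius, survives), forcing $H^{>0}(X;\mathbb F_p)=0$. Your additional care in tracking which cells of $\E(p)$ carry $P^0$ and in translating the cohomological vanishing into the stated homotopical conclusion fills in details the paper leaves implicit, but does not change the argument.
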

\begin{proof}
    Suppose towards contradiction that $E_\infty$-algebra $ C^\ast(X, \mathbb F_p)$ admitted a commutative model in the category of $\mathbb F_p$-commutative dg-algebras. Recall that $ C^\ast(X, \mathbb F_p)$ admits Steenrod operations on its cohomology. Such operations are preserved by quasi-isomorphisms of $E_\infty$-algebras. All of these operations vanish on strictly commutative dg-algebras except for $\operatorname{P}^nx$ when $|x|=n.$ In particular, the zeroth Steenrod operation $\operatorname{P}^0x$ is always $x$ on cohomology of $ C^\ast(X, \mathbb F_p)$, while $\operatorname{P}^0x$ vanishes on the cohomology of commutative dg-algebra, except when $|x|=0.$ It follows that  $ C^\ast(X, \mathbb F_p)$ admits a commutative model only if its cohomology is concentrated in degree 0.
\end{proof}
We now proceed to define \emph{higher Steenrod operations}. These will be our obstructions to commutativity. These are essentially the subset of cotriple products given by the primary Steenrod operations, and higher obstructions formed by syzgyies of Steenrod operations.

\begin{definition}
Consider the map of operads $\mathcal E \to \operatorname{Com}$. Let $(\E(\bigoplus^N_{i = 0} V_i), d) $ be an $N$-step Sullivan model. Then the \emph{Sullivan projection map} is the map of $\E$-algebras
$$
\pi_N: (\E(\bigoplus^N_{i = 0} V_i), d) \to (\Sym(\bigoplus^N_{i = 0} V_i), d).
$$
 and is defined by induction on $i$ as follows.  When $i = 0;$ $\pi_0: \E(V_0) \to \Sym(V_0)$ is the $\E$-algebra map directly induced by the map $\E\to \operatorname{Com}$. Therefore we assume that there is a map $\pi_k: (\E(\bigoplus^k_{i = 0} V_i), d) \to (\Sym(\bigoplus^k_{i = 0} V_i), d).$ We define $(\Sym(\bigoplus^k_{i = 0} V_i), d)$ on generators via the attachment map 
 $$dV_{k+1} \to \E(\bigoplus^k_{i = 0} V_i), d) \to (\Sym(\bigoplus^k_{i = 0} V_i), d)$$
 and extend this as a derivation. The map $\pi_k$ therefore also extends to 
 $$
 \pi_{k+1}: (\E(\bigoplus^{k+1}_{i = 0} V_i), d) \to (\Sym(\bigoplus^{k+1}_{i = 0} V_i), d)
 $$
 by sending $V_{k+1}$ to itself.
\end{definition}
\begin{definition}
    Let $\mathcal E$ be a model for the $E_\infty$-operad, $A$ be a $\mathcal E$-algebra and let $\sigma \in I( \mathcal E(\bigoplus_{i=1}^N V_i), d))$ be a cocycle. There is a unique quasi-isomorphism $\mathcal E \to \mathsf{Com}.$ Then $\sigma$ defines a  \textit{higher Steenrod operation of order $N$} if it appears in the kernel of the Sullivan projection map 
    $$
    \mathcal E(\bigoplus_{i=1}^N V_i), d) \to  \Sym(\bigoplus_{i=1}^N V_i), d).
    $$
    
\end{definition}
\begin{corollary}
    Let $A$ be an $\E$-algebra. Suppose $A$ admits a higher Steenrod operation that does not vanish as a differential in the cotriple spectral sequence. Then $A$ is not rectifiable.
\end{corollary}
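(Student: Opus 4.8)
The plan is to argue by contraposition: I would show that if $A$ \emph{is} rectifiable, then every higher Steenrod operation vanishes as a differential in the cotriple spectral sequence, contradicting the hypothesis. First I would unwind rectifiability as the statement that $A$ is connected by a zigzag of quasi-isomorphisms of $\mathcal E$-algebras to $\phi^* A'$, the restriction along $\phi: \mathcal E \to \mathsf{Com}$ of some strictly commutative dg-algebra $A'$. Since $\mathcal E$ reflects homotopy equivalences, Proposition \ref{productsetease} applies along this zigzag: for the fixed $N$-step $\mathcal E$-Sullivan model and any cocycle $\sigma$ in it, the $\sigma$-cotriple product set of $A$ is carried bijectively, via the induced cohomology isomorphisms, to the $\sigma$-cotriple product set of $\phi^* A'$. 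By Theorem \ref{Thm : Differentials EMSS and cotriple products} the associated differential $d_{N-1}[G(\sigma)]$ vanishes exactly when this product set contains $0$. Hence it suffices to prove that the $\sigma$-cotriple product set of $\phi^* A'$ equals $\{0\}$ whenever $\sigma$ is a higher Steenrod operation.

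The heart of the argument is a factorization through the Sullivan projection. Fixing $(\mathcal E(\bigoplus_{i=0}^N V_i), d)$ together with $\pi_N: (\mathcal E(\bigoplus V_i), d) \to (\Sym(\bigoplus V_i), d)$, I claim that any defining system $g_N: (\mathcal E(\bigoplus V_i), d) \to \phi^* A'$ factors as $g_N = \bar g_N \circ \pi_N$ for a commutative-algebra chain map $\bar g_N: (\Sym(\bigoplus V_i), d) \to A'$. Indeed, because $A'$ is strictly commutative its $\mathcal E$-structure map factors through $\phi$, so the unique graded $\mathcal E$-algebra map out of the free algebra determined by the generator values $\bigoplus V_i \to A'$ coincides with $\bar g_N \circ \pi_N$, where $\bar g_N$ is the commutative extension of those same values; both are $\mathcal E$-algebra maps agreeing on generators. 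One then checks $\bar g_N$ is a chain map: on generators $\bar g_N d_{\Sym} = d_{A'}\bar g_N$ since $\pi_N$ is a chain map fixing the generators and $g_N$ is a chain map, and since both sides are $\bar g_N$-derivations this propagates to all of $\Sym(\bigoplus V_i)$. As $\sigma$ lies in $\ker \pi_N$ by the very definition of a higher Steenrod operation, we obtain $g_N(\sigma) = \bar g_N(\pi_N(\sigma)) = 0$ for \emph{every} defining system, so the $\sigma$-cotriple product set of $\phi^* A'$ is exactly $\{0\}$ (the normalization on $V_0$ plays no role here, as the factorization holds for any $\mathcal E$-algebra map into $\phi^* A'$).

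Assembling the pieces, the bijection of Proposition \ref{productsetease} then forces the product set on $A$ to be $\{0\}$ as well, so the differential $d_{N-1}[G(\sigma)]$ vanishes in the cotriple spectral sequence of $A$, contradicting the assumed non-vanishing; hence $A$ is not rectifiable. I expect the main obstacle to be the factorization step, specifically verifying that $\bar g_N$ respects the nontrivial Sullivan differentials and not merely the underlying graded $\mathcal E$-algebra structure, and confirming that defining systems on $\phi^* A'$ are genuinely exhausted by maps of this factored form. The homotopy-invariance input and the product-set/differential dictionary are then routine invocations of Proposition \ref{productsetease} and Theorem \ref{Thm : Differentials EMSS and cotriple products}.
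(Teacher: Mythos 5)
Your proposal is correct and follows essentially the same route as the paper's proof: the paper likewise argues that a higher Steenrod operation evaluates to zero under every defining system on a strictly commutative algebra (your factorization through the Sullivan projection $\pi_N$ makes precise the paper's phrase ``a defining system in which every operation vanishes''), and then invokes homotopy invariance of the differentials in the cotriple spectral sequence to transport this vanishing back to $A$. Your write-up simply supplies more detail at the factorization and product-set/differential translation steps than the paper's brief argument.
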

\begin{proof}
    Any non-commutative higher Steenrod operation is always identically zero on a strictly commutative dg-algebra $B$. This is because it can be written in terms of a defining system in which every operation vanishes on $B$. Moreover  higher Massey operations are preserved by quasi-isomorphisms of $E_\infty$-algebras as differentials in the cotriple spectral sequence. Therefore $A$ cannot be quasi-isomorphic to a commutative dg-algebra.
\end{proof}
\subsection{Necessary and sufficient condition for rectifiability}

The purpose of this section is to show that our obstruction theory for commutativity is  complete. In other words, we shall give a necessary and sufficient condition for an arbitrary $E_\infty$-algebra $A$ over $\mathbb F_p$ to have a commutative model. Note that in this case, by Proposition \ref{prop:spaces_are_never_commutative} $A$ will never have the homotopy type of a space.  Our result is inspired by the following classical result which we state first.
\begin{theorem}
\label{deligne}
\cite{deligne75}
    Let $A$ be a commutative dg-algebra in $\mathbb Q$-vector spaces. Let $\mathfrak m = (\mathbb \Sym(\bigoplus_{i=0}^\infty V_i), d)$ be the minimal model for $A$. Then $A$ is formal if and only if, there is in each $V_i$ a complement $B_i$ to the cocycles $Z_i$, $V_i= Z_i \oplus B_i$, such that any closed form, $a$, in the ideal, $I((\bigoplus_{i=0}^\infty B_i)$, is exact. 
\end{theorem}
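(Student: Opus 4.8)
The statement is the classical formality criterion of Deligne--Griffiths--Morgan--Sullivan, so the plan is to reprove both implications directly from the minimal model $\mathfrak m = (\Sym(V),d)$, writing $V=\bigoplus_i V_i$, $Z=\bigoplus_i Z_i$, $B=\bigoplus_i B_i$, and $H:=H^*(\mathfrak m)=H^*(A)$ regarded as a commutative dg-algebra with zero differential. The backbone of the argument is the vector-space decomposition $\Sym(V)=\Sym(Z)\oplus I(B)$ coming from $\Sym(V)=\Sym(Z\oplus B)$, together with the observation that $d$ vanishes identically on $\Sym(Z)$ (since every generator of $Z$ is a cocycle and $d$ is a derivation).

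For the implication $(\Leftarrow)$ I would exhibit an explicit quasi-isomorphism $\phi\colon \Sym(V)\to (H,0)$ witnessing formality. Define $\phi$ to be the algebra map determined by $\phi(z)=[z]$ for $z\in Z$ and $\phi(b)=0$ for $b\in B$; this is well defined because $\Sym(V)$ is free, and it kills $I(B)$. The only thing to check for $\phi$ to be a chain map is $\phi(dv)=0$ on generators, which is trivial for $v\in Z$; for $v\in B$ this is exactly where the hypothesis enters. Writing $dv=w+j$ with $w\in\Sym(Z)$ and $j\in I(B)$, the relation $0=d(dv)=dj$ shows that $j$ is a closed form in $I(B)$, hence exact; as $dv$ is a boundary, $w=dv-j$ is also exact, so $[w]=0$ and $\phi(dv)=\phi(w)=[w]=0$. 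The same decomposition shows $\phi$ is a quasi-isomorphism: for a cocycle $\xi=w+j$ the piece $j$ is closed, hence exact, so $[\xi]=[w]$ and $\phi_*[\xi]=[w]=[\xi]$, i.e. $\phi_*$ is the identity on $H$.

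For $(\Rightarrow)$ I would begin with any formality quasi-isomorphism $\psi\colon\Sym(V)\to(H,0)$ and first reduce the problem: it suffices to find, in each $V_i$, a complement $B_i$ to $Z_i$ on which $\psi$ vanishes. Indeed, $\psi(B_i)=0$ forces $I(B)\subseteq\ker\psi$, so any cocycle $a\in I(B)$ satisfies $\psi_*[a]=[\psi(a)]=0$, whence $[a]=0$ by injectivity of $\psi_*$. The remaining task is thus to arrange that some formality quasi-isomorphism vanishes on a complement of the cocycles in every $V_i$. Fixing arbitrary complements $W_i$ to $Z_i$, I would replace $\psi$ by a homotopic map $\psi'$, constructed by induction along the minimal-model filtration, whose correction term drives $\psi'(w)$ to $0$ for $w\in W_i$; minimality, $dV_i\subseteq\Sym(V_{<i})$ with image in decomposables, is what makes the inductive step solvable, since the required adjustment of $\psi$ on $V_i$ only involves values already fixed on $\Sym(V_{<i})$. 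Because $\psi'$ is homotopic to $\psi$ it remains a quasi-isomorphism, and one then takes $B_i=W_i$.

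The main obstacle is precisely this inductive modification of the quasi-isomorphism in $(\Rightarrow)$: one must produce the homotopy coherently across all filtration stages so that the value of $\psi$ on a chosen complement of the cocycles can be driven to zero without disturbing the vanishing already secured at lower stages, and it is here that minimality of $\mathfrak m$ and the freedom in the choice of the $W_i$ are indispensable. By contrast the direction $(\Leftarrow)$ reduces, once $\phi$ is written down, to the single routine verification above.
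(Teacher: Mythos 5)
The paper does not actually prove Theorem \ref{deligne}: it is quoted from \cite{deligne75} and used only as motivation for Definition \ref{def:coherentvanishing} and Theorem \ref{thm:rectification}, so there is no in-paper argument to compare yours against; your proposal has to stand on its own. Your $(\Leftarrow)$ direction does: the vector-space decomposition $\Sym(V)=\Sym(Z)\oplus I(B)$, the observation that $d$ vanishes on $\Sym(Z)$, the verification that $\phi(dv)=0$ on generators via $dv=w+j$ with $j$ closed in $I(B)$ and hence exact, and the computation that $\phi_*$ is the identity on cohomology are all correct and constitute the standard argument for this implication.

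The genuine gap is in $(\Rightarrow)$, and it is exactly the step you yourself flag as ``the main obstacle.'' Your plan is to fix \emph{arbitrary} complements $W_i$ to $Z_i$ and then homotope $\psi$ to a quasi-isomorphism $\psi'$ with $\psi'(W_i)=0$. If that were possible, then by your own reduction \emph{every} choice of complement would satisfy the conclusion, which would upgrade the theorem's existential quantifier (``there is in each $V_i$ a complement $B_i$'') to a universal one; the quantifier is existential precisely because the complement must be adapted to the formality map. More concretely: a chain map into $(H,0)$ is pinned down on closed generators up to homotopy (homotopic maps agree on cohomology classes, so $\psi'(z)=\psi(z)$ for $z\in Z_i$), while on a non-closed generator $v$ the correction $\psi'(v)-\psi(v)$ is governed by the homotopy operator applied to $dv$, which is already determined at lower stages; and the naive assignment $\psi'(w)=0$, $\psi'(z)=\psi(z)$ need not be a chain map, since for $dv=w+j$ with $w\in\Sym(Z)$, $j\in I(W)$, killing the $I(W)$-part leaves $\psi(w)$, which has no reason to vanish for a badly chosen $W$. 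The standard repair reverses the order of choices: one first produces a specific quasi-isomorphism $\rho\colon\mathfrak m\to (H,0)$ satisfying $V_i=Z_i+\left(V_i\cap\ker\rho\right)$ --- for instance the one obtained by constructing the minimal model of $(H,0)$, where every non-closed generator is sent to zero by construction, and invoking uniqueness of minimal models --- and only then takes $B_i$ to be a complement of $Z_i$ inside $V_i\cap\ker\rho$. As written, your $(\Rightarrow)$ direction asserts the crucial construction rather than performing it, and the version asserted (arbitrary $W_i$) is too strong to be true.
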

\begin{remark}
    The condition stated in this theorem is often referred as to the \emph{coherent vanishing} of Massey products. The reason for this is that any closed form $a$ in the ideal $I((\bigoplus_{i=0}^\infty B_i)$ is a Massey product in the sense of Definition \ref{def:product_set}, since the minimal resolution can be upgraded to a Sullivan resolution.  The condition that $a$ is exact is precisely the requirement that it vanish in cohomology.
\end{remark}
When we are not working in the rational setting, there is no longer a preferred choice of cofibrant resolution like the minimal model. Therefore our statement will be stated in the language of Sullivan resolutions.
\begin{definition}
\label{def:coherentvanishing}
    Let $A$ be an $E_\infty$-algebra over $\mathbb F_p$. Then the higher Steenrod operations \emph{vanish coherently} if for every Sullivan resolution $(\mathcal E(\bigoplus_{i=0}^\infty V_i), d)$ for $A$, there exists a splitting $V_i=X_i\bigoplus Y_i,$ with $X_0 = V_0$; such that $(\Sym(\bigoplus_{i=0}^{\infty}X_i), d)$ is a Sullivan algebra and the kernel of
    $$
(\mathcal E( \bigoplus_{i=0}^{\infty}V_i), d )\to  (\Sym(\bigoplus_{i=0}^{\infty}X_i), d)
$$
is acyclic.
\end{definition}
It is worth noting that if \textbf{any} choice of Sullivan resolution admits such a splitting, then the proof of  Theorem \ref{thm:rectification} implies that there is such a splitting for \textbf{every} choice. In this sense, the non-uniqueness of Sullivan resolutions does not change the theory significantly from the rational case.
\begin{remark}
The cocycles appearing in the kernel represent Steenrod operations. For example, the kernel of the $\E(V_0)\to \Sym(V_0)$ component are precisely the Steenrod operations and the definition of a Sullivan algebra immediately implies that these extra cocycles are killed by $Y_1.$ 
\end{remark}
While this definition is clearly highly reminiscent of coherent vanishing of Massey products, it is not precisely the same. Firstly, Theorem \ref{deligne} uses minimal models, while coherent vanishing of cotriple products uses Sullivan resolutions, which will generally be much larger. Secondly, minimal models are unique, while there are many choices of Sullivan resolution. Thirdly, coherent vanishing of Massey products involves the vanishing of all higher order operations, while coherent vanishing of cotriple operations does not require either Massey products or the higher commutative operations of section 4 to vanish.

\begin{theorem}
\label{thm:rectification}
    Let $A$ be an $E_\infty$-algebra over $\mathbb F_p$. Then $A$ is rectifiable if and only if its higher Steenrod operations vanish coherently.
\end{theorem}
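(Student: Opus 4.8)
The plan is to prove the two implications separately, the backward one being essentially formal and the forward one requiring an inductive construction. Throughout I use that $\mathcal E$ reflects homotopy equivalences, so that $\mathcal E$-Sullivan resolutions exist (Proposition \ref{prop:sullivan_model}), are cofibrant (Theorem \ref{thm:llp}), and are unique up to homotopy equivalence (Proposition \ref{prop:sullivan_algebras_respect_weak_eq}). I will interpret \emph{rectifiable} as meaning that $A$ is connected by a zig-zag of $\mathcal E$-quasi-isomorphisms to $\phi^* B$ for some strictly commutative dg-algebra $B$.

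For the implication that coherent vanishing implies rectifiability, fix any Sullivan resolution $(\mathcal E(\bigoplus_{i} V_i), d) \xrightarrow{\sim} A$ together with the splitting $V_i = X_i \oplus Y_i$ provided by Definition \ref{def:coherentvanishing}. The projection $\pi \colon (\mathcal E(\bigoplus_i V_i), d) \to (\Sym(\bigoplus_i X_i), d)$ is surjective, since the generators $X_i$ lie in its image and $\Sym(\bigoplus_i X_i)$ is generated by them over $\mathsf{Com}$. The short exact sequence $0 \to \ker\pi \to \mathcal E(\bigoplus_i V_i) \to \Sym(\bigoplus_i X_i) \to 0$ together with the acyclicity of $\ker\pi$ then forces $\pi$ to be a quasi-isomorphism. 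Its target is a strictly commutative dg-algebra and its source is quasi-isomorphic to $A$, so the chain $A \simeq \mathcal E(\bigoplus_i V_i) \xrightarrow{\pi,\sim} \Sym(\bigoplus_i X_i)$ exhibits $A$ as rectifiable.

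For the converse, suppose $A \simeq \phi^* B$ with $B$ commutative. I would first construct one adapted resolution. Take a commutative Sullivan resolution $(\Sym(\bigoplus_i X_i), d_X) \xrightarrow{\sim} B$ and build an $\mathcal E$-Sullivan resolution of $\phi^* B$ on top of it by induction: set $V_0 = X_0 = H^*(A)$, and at each later stage adjoin the commutative generators $X_{k+1}$, chosen so that the Sullivan projection restricts to $d_X$, together with additional generators $Y_{k+1}$ whose differentials kill exactly the cohomology of $\ker\pi_k$ — namely the Steenrod operations and their syzygies seen by $\mathcal E(\bigoplus_{i\le k} X_i) \to \Sym(\bigoplus_{i\le k} X_i)$ but not by $\Sym$. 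By construction $\pi$ is a chain map sending $Y_i \mapsto 0$ and $X_i \mapsto X_i$, and the $Y_i$ are introduced precisely to annihilate the cohomology of the kernel, so $\ker\pi$ is acyclic and $V_i = X_i \oplus Y_i$ is the desired splitting.

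The remaining step, and the main obstacle, is to upgrade this from one resolution to every resolution, as Definition \ref{def:coherentvanishing} demands. Given an arbitrary Sullivan resolution $R$ of $A$, Proposition \ref{prop:sullivan_algebras_respect_weak_eq} connects it to the resolution $R_0$ built above by a homotopy equivalence (through the zig-zag relating $A$ and $\phi^* B$), and one would transport the splitting of $R_0$ back along this equivalence. The delicate point is that the splitting is not canonical, so one must check that the homotopy equivalence between two Sullivan resolutions can be arranged to respect the projections to $\Sym$, so that acyclicity of the kernel is genuinely preserved rather than merely the existence of some commutative model. This is exactly where the freeness of the $\mathbb S_n$-action on $\mathcal E$, and hence the good homotopical behaviour of $\mathcal E(-)$, is doing the real work, and it is the technical heart of the argument; it also yields the remark that one splitting exists if and only if all do.
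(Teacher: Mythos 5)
Your backward implication is correct and is essentially the paper's argument: the projection to $\Sym(\bigoplus_i X_i)$ is surjective, so acyclicity of its kernel forces it to be a quasi-isomorphism onto a strictly commutative algebra, and rectifiability follows.

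The forward implication has a genuine gap, which you flag yourself but do not close: Definition \ref{def:coherentvanishing} quantifies over \emph{every} Sullivan resolution of $A$, whereas you construct the splitting only on one specially built resolution and then defer to an unproven claim that the comparison equivalences between Sullivan resolutions ``can be arranged to respect the projections to $\Sym$''. That claim is not even well posed as stated, since the projection on an arbitrary resolution $R$ depends on a splitting of $R$, which is exactly the datum you are trying to produce; the freeness of the $\mathbb S_n$-action does not supply it. The paper avoids the transport problem entirely by splitting an \emph{arbitrary} resolution directly: given any Sullivan resolution $(\E(\bigoplus_i V_i),d)$ of $A$, Proposition \ref{prop:sullivan_algebras_respect_weak_eq} yields a Sullivan map $f$ to the commutative model $\bar A$; strict commutativity of $\bar A$ forces each $f_k$ to factor through $\Sym(\bigoplus_{i\le k}X_i)$, and one \emph{defines} $Y_{k+1}$ as the kernel of the composite $V_{k+1}\xrightarrow{d}\E(\bigoplus_{i\le k}V_i)\to\Sym(\bigoplus_{i\le k}X_i)$, with $X_{k+1}$ any complement. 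Acyclicity of $\ker\bigl(\E(\bigoplus_i V_i)\to\Sym(\bigoplus_i X_i)\bigr)$ then follows because a cocycle $\sigma$ in the kernel lies in some finite stage $N$, is killed by $f$ since $f_N$ factors through $\Sym$, hence equals $d\tau$ for some $\tau\in V_{N+1}$ by the Sullivan condition, and that $\tau$ lands in $Y_{N+1}$ by the very definition of the splitting. (The paper's observation that one splitting exists if and only if all do is a \emph{consequence} of combining the two directions of the theorem, not the product of a direct transport argument.) Reworking your forward direction to split the given resolution in place, rather than building a new one and transporting, closes the argument.
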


\begin{proof}
First we prove the \emph{only if} direction. That is to say that we first suppose that $A$ is rectifiable and we then we shall show that every Sullivan model for $A$ admits a splitting such that the conditions of Definition \ref{def:coherentvanishing} are satisfied. If $A$ is rectifiable it has a strictly commutative model $\bar{A}$. Since Sullivan models are cofibrant, it follows from Proposition \ref{prop:sullivan_algebras_respect_weak_eq} that one has a map 
$$
f: (\E(\bigoplus^\infty_{i=0} V_i),d)\to \bar{A}
$$
satisfying the axioms of a Sullivan algebra.
We build the desired splitting by induction on $i$.
Firstly  let $X_0 = V_0$ and $Y_0 = 0$. Since $\bar{A}$ is strictly commutative, there is a factorisation
\[
\begin{tikzcd}
\E(V_0) \arrow[r, "g_0"] \arrow[rd, "f|_{\E(V_0)}"'] &  \Sym(X_0) \arrow[d, "h_0"] \\
& \bar{A}
\end{tikzcd}
\]
Consider the map 
$$
V_1 \xrightarrow{d} \E(V_0)\xrightarrow{g_0} \Sym(X_0)
$$
There is a splitting $V_1 = X_1\oplus Y_1$ such that $Y_1$ is the kernel of this map and $X_1$ is some complement to it.

Now inductively, we assume the splitting $V_i = X_i \oplus Y_i$ exists for $i \leq k$ and moreover that there is a factorisation
\[
\begin{tikzcd}
\E((\bigoplus_{i=0}^k V_i), d) \arrow[r, "g_k"] \arrow[rd, "f_k"'] &  \Sym((\bigoplus_{i=0}^k X_i), d)\arrow[d, "h_k"] \\
& \bar{A}
\end{tikzcd}
\]
Then there is a splitting $V_{k+1} = X_{k+1}\oplus Y_{k+1}$ such that $Y_{k+1}$ is the kernel of the map 
$$
V_{k+1} \xrightarrow{d} \E((\bigoplus_{i=0}^{k+1} V_i), d)\xrightarrow{g_k} \Sym((\bigoplus_{i=0}^k X_i)
$$
and $X_{k+1}$ is some complement to it. The existence of the factorisation once again follows from the fact $\bar{A}$ is commutative.

Lastly, we verify that this splitting satisfies the condition that the kernel of the projection
$$
(\mathcal E(\bigoplus_{i=0}^{\infty}V_i), d) \to  (\Sym(\bigoplus_{i=0}^{\infty}X_i), d)
$$
is acyclic. Suppose that $\sigma \in (\mathcal E(\bigoplus_{i=)}^{\infty}V_i), d)$ is a cocycle in the kernel. Then we have that 
$\sigma \in (\mathcal E(\bigoplus_{i=0}^{N}V_i), d)$ for some $N.$ Since the map $f_N$, by construction, factors through $\Sym(\bigoplus_{i=1}^{N}X_i), d)$ it follows that $f(\sigma)= 0.$ It then follows from the second condition of Definition \ref{def:sullivan_algebra}, that there is $\tau \in V_{k+1}$ such that $d\tau = \sigma.$ It is clear from our definition of the splitting that $\tau\in Y_{N+1}$.

\medskip

Conversely, suppose that $A$ is an $E_\infty$-algebra over $\mathbb F_p$ such that its higher Steenrod operations vanish coherently. Then, by definition, there is a quasi-isomorphism
$$
(\mathcal E(\bigoplus_{i=0}^\infty X_i\oplus Y_i), d) \xrightarrow{\sim} A.
$$
where $(\mathcal E(\bigoplus_{i=0}^\infty X_i\oplus Y_i), d)$ satisfies the hypotheses of Definition \ref{def:coherentvanishing}. We claim that the projection map
$$
f:(\mathcal E(\bigoplus_{i=0}^\infty X_i\oplus Y_i), d) \to \operatorname{Sym}(\bigoplus_{i=0}^\infty X_i)
$$
is a quasi-isomorphism. The map $f$ is surjective so, by the long exact sequence in cohomology, it suffice to prove that the kernel of $f$ is acyclic. This is precisely the coherent vanishing condition.
\end{proof}
The previous result has the following corollary; which is proven similarly.
\begin{definition}
    Let $A$ be an $\E$-algebra. We say that it is \emph{formal} if it is quasi-isomorphic to $H^\ast(A)$, regarding the cohomology as a commutative algebra.
\end{definition}
It is clear that formality implies rectifiability. However the opposite is not true. In particular, Massey products are examples of cotriple products that are not, with rare exception, obstructions to rectifiability, but are an obstruction to formality.
\begin{definition}
    Let $A$ be an $E_\infty$-algebra over $\mathbb F_p$. Then the cotriple products \emph{vanish coherently} if for every Sullivan resolution $(\mathcal E(\bigoplus_{i=0}^\infty V_i), d)$ for $A$, the ideal $I(dV_1\oplus \mathcal E(\bigoplus_{i=1}^\infty V_i), d)$ is acyclic.
\end{definition}
\begin{corollary}
Let $A$ be an $E_\infty$-algebra over $\mathbb F_p$. Then $A$ is formal as an $\mathcal E$-algebra if and only if its cotriple operations all vanish coherently.   
\end{corollary}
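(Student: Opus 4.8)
The plan is to transcribe the proof of Theorem \ref{thm:rectification}, making two substitutions: the strictly commutative target $\Sym(\bigoplus_i X_i)$ is replaced throughout by the cohomology $(H^\ast(A),0)$, regarded as a commutative and hence $\E$-algebra via $\E\to\mathsf{Com}$, and the higher-Steenrod splitting is replaced by the single hypothesis that the dg-ideal $I:=I(dV_1\oplus\E(\bigoplus_{i\ge 1}V_i))$ is acyclic. The object that carries the argument is the \emph{formality projection} $\pi\colon(\E(\bigoplus_i V_i),d)\to(H^\ast(A),0)$, the $\E$-algebra map that is the tautological identification on $V_0=H^\ast(A)$ and annihilates every higher generator in $\bigoplus_{i\ge1}V_i$; this is the exact analogue, for the terminal commutative target, of the Sullivan projection map used to define higher Steenrod operations. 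Since quotienting $\E(\bigoplus_i V_i)$ by the ideal generated by $\bigoplus_{i\ge1}V_i$ returns $\E(V_0)$, and since the classes $[dV_1]$ span $\ker(\mathcal A(V_0)\to H^\ast(A))$ by the construction of Proposition \ref{prop:sullivan_model}, the relevant comparison is between $\E(\bigoplus_i V_i)/I=\E(V_0)/(dV_1)$ and $(H^\ast(A),0)$. Because $\pi$ is surjective, the long exact sequence of $0\to I\to\E(\bigoplus_i V_i)\xrightarrow{\pi}(H^\ast(A),0)\to 0$ shows that $\pi$ is a quasi-isomorphism precisely when $I$ is acyclic, so the statement reduces to the equivalence ``$A$ formal $\iff$ $\pi$ is a quasi-isomorphism for one, hence every, Sullivan resolution''.

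For the \emph{only if} direction I would assume $A\simeq(H^\ast(A),0)$ and fix any Sullivan resolution $(\E(\bigoplus_i V_i),d)\xrightarrow{\sim}A$. Cofibrancy (Theorem \ref{thm:llp}) together with Proposition \ref{prop:sullivan_algebras_respect_weak_eq} and Remark \ref{rem:sullivan_algebras_respect_weak_eq} transports this to a Sullivan resolution landing in $(H^\ast(A),0)$; because that target has zero differential the resolution can be normalised so that it is exactly $\pi$, every higher generator being sent to zero once the relations $dV_1$ have been killed on $V_0$. Hence $\pi$ is a quasi-isomorphism and $I$ is acyclic, and the passage from one resolution to all resolutions is the same transport-of-resolutions bookkeeping recorded after Definition \ref{def:coherentvanishing} for Theorem \ref{thm:rectification}. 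For the converse I would assume $I$ acyclic; then the quotient map $\E(\bigoplus_i V_i)\to\E(V_0)/(dV_1)$ is a surjective quasi-isomorphism, so $A\simeq\E(V_0)/(dV_1)$, and it remains to identify this quotient with $(H^\ast(A),0)$. Acyclicity of $I$ says precisely that every cocycle supported on the higher generators is a coboundary in $I$, i.e. that all cotriple products -- Steenrod, Massey and the Frobenius operations of Section \ref{froben123} -- vanish coherently; this is what collapses the quotient onto its cohomology and exhibits $A$ as formal.

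The main obstacle is exactly the point where this argument departs from the rational Deligne--Griffiths--Morgan--Sullivan proof of Theorem \ref{deligne}: the formality projection $\pi$ realises every generator through the \emph{commutative} structure of $H^\ast(A)$ rather than through the genuine $\E$-action, so $\pi$ is a chain map only when each relation $dv$ has vanishing commutative realisation. Controlling this is the crux of both directions: in the \emph{only if} part one must check that a transported resolution of $(H^\ast(A),0)$ can genuinely be normalised to annihilate $\bigoplus_{i\ge1}V_i$, and in the \emph{if} part one must check that acyclicity of $I$ forces the commutative and the genuine realisations of the relations to agree, which is what makes $\E(V_0)/(dV_1)$ quasi-isomorphic to $(H^\ast(A),0)$ rather than merely having the correct cohomology. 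In both cases the obstruction to $\pi$ being a chain map is measured by the cotriple products carried by $I$, so acyclicity of $I$ is the precise condition that removes it; the remaining verifications -- surjectivity of $\pi$, the long exact sequence, and the transport across resolutions -- are routine and identical to those in Theorem \ref{thm:rectification}.
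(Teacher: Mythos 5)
Your overall route is the paper's: realise $H^\ast(A)$ as the target of a surjective map out of the Sullivan resolution, identify the ideal $I(dV_1\oplus \E(\bigoplus_{i\geq 1}V_i),d)$ with the kernel of that map, and run the long exact sequence; for the \emph{only if} direction, transport the resolution onto $(H^\ast(A),0)$ via Proposition \ref{prop:sullivan_algebras_respect_weak_eq}. Your ``normalisation'' of the transported resolution is exactly the paper's change of generators $W_i=\{v-(f|_{V_0})^{-1}(f(v))\}$, which replaces each higher generator by one that $f$ annihilates and identifies $\ker f$ with the ideal $I$.

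There is, however, a concrete gap in your \emph{if} direction. You quotient by the ideal generated by $dV_1$ and the higher generators to land in $\E(V_0)/(dV_1)$, and then say that acyclicity of $I$ ``collapses the quotient onto its cohomology''; that last step is circular and is also where your two descriptions of $I$ come apart. If $I$ is literally the ideal generated by $dV_1$ and $\bigoplus_{i\geq1}V_i$, then $\E(\bigoplus_iV_i)/I=\E(V_0)/(dV_1)$ still contains, e.g., the positive-homological-degree operadic part of $\E(V_0)$, and identifying it with $(H^\ast(A),0)$ is precisely what remains to be proved; acyclicity of $I$ does not do this for you. The paper sidesteps the issue by taking $I$ to be the kernel of the $\E$-algebra map $(\E(\bigoplus_iV_i),d)\to V_0=H^\ast(A)$ (with $H^\ast(A)$ carrying its commutative, hence $\E$-, structure), so the quotient \emph{is} $H^\ast(A)$ by construction and the long exact sequence closes the argument in one step. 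Relatedly, your ``formality projection'' $\pi$ is not automatically a chain map — the commutative realisation of $dv$ in $H^\ast(A)$ for $v\in V_1$ need not vanish a priori — which is why the paper works with the transported resolution $f$ (a chain map by construction) and recentres its generators, rather than positing $\pi$ up front; you flag this as ``the main obstacle'' but do not actually discharge it.
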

\begin{proof}
    First we prove the \emph{only if} direction. That is to say that we first suppose that $A$ is formal and we then we shall show that every Sullivan model for $A$, the cotriple operations vanish coherently. It follows from Proposition \ref{prop:sullivan_algebras_respect_weak_eq} that one has a map 
$$
f: (\E(\bigoplus^\infty_{i=0} V_i),d)\to H^\ast(A)
$$
that makes $\left((\E(\bigoplus^\infty_{i=0} V_i),d),f\right)$ a Sullivan model for $H^*(A)$. This map is clearly surjective and therefore the kernel of $f$ must be acyclic. If we show that the kernel is isomorphic to $I(dV_1\oplus \mathcal E(\bigoplus_{i=1}^\infty V_i), d)$, we can conclude the result by the long exact sequence in cohomology. First note that the map $f|_{V_0} = \id_H$. Let 
$$W_i = \{v-\left(f|_{V_0}\right)^{-1}\left(f(v)\right): v\in V_i\}$$
Then, one can easily verify that the kernel of $f$ is equal to  $I(dV_1\oplus \mathcal E(\bigoplus_{i=1}^\infty W_i), d)$ and this is isomorphic to the ideal $I(dV_1\oplus \mathcal E(\bigoplus_{i=1}^\infty V_i), d)$. In particular, both have the same cohomology .

\medskip

Conversely, suppose that $A$ is an $E_\infty$-algebra over $\mathbb F_p$ such that its cotriple operations vanish coherently. Then, by definition, there is a quasi-isomorphism
$$
(\mathcal E(\bigoplus_{i=0}^\infty V_i), d) \xrightarrow{\sim} A.
$$
where $I(dV_1\oplus \mathcal E(\bigoplus_{i=1}^\infty V_i), d)$ is acyclic. But this is the kernel of the algebra map
$$
(\mathcal E(\bigoplus_{i=0}^\infty V_i), d) \to V_0 = H^\ast(A)
$$
so, by the long exact sequence in cohomology, we conclude that this map is an isomorphism, and so $A$ is formal.
\end{proof}

\bibliographystyle{plain}
\bibliography{MyBib}

\bigskip

\noindent\sc{Oisín Flynn-Connolly}\\ 
\noindent\sc{ Leiden University,  Leiden, The Netherlands}\\
\noindent\tt{oisinflynnconnolly@gmail.com}\\

\noindent The author completed the majority of this work while holding an affiliation with:

\qquad

\noindent\sc{Université Sorbonne Paris Nord \\ Laboratoire de Géométrie, Analyse et Applications, LAGA \\ CNRS, UMR 7539, F-93430  Villetaneuse, France}\\

\end{document}